\newtheorem*{thm-no-num}{Theorem}
\newtheorem*{df-no-num}{Definition}
\newtheorem*{kl-no-num}{Key Lemma}
\newtheorem{thm}{Theorem} [section]
\newtheorem{prop}[thm]{Proposition} 
\newtheorem{lm}[thm]{Lemma} 
\newtheorem{kl}[thm]{Key Lemma}
\newtheorem{cor}[thm]{Corollary} 
\theoremstyle{remark}
\newtheorem{rmk}[thm]{Remark}
\newtheorem*{rmk-no-num}{Remark}
\theoremstyle{definition} 
\newtheorem {df}[thm]{Definition}
\newcommand{\bA}{\mathbb{A}}
\newcommand{\PP}{\mathbb{P}}
\newcommand{\ZZ}{\mathbb{Z}}
\newcommand{\FF}{\mathbb{F}}
\newcommand{\OO}{\mathcal{O}}
\newcommand{\cl}[1]{\mathcal{#1}}
\newcommand*{\sheafhom}{\mathcal{H}\kern -.5pt om}
\newcommand{\Dcal}{\mathcal{D}}
\newcommand{\Ecal}{\cl{E}}
\newcommand{\Fcal}{\cl{F}}
\newcommand{\Gcal}{\cl{G}}
\newcommand{\Bcal}{\cl{B}}
\newcommand{\Zcal}{\cl{Z}}
\newcommand{\Hcal}{\cl{H}}
\newcommand{\Ical}{\cl{I}}
\newcommand{\Lcal}{\cl{L}}
\newcommand{\Xcal}{\cl{X}}
\newcommand{\Ycal}{\cl{Y}}
\newcommand{\Vcal}{\cl{V}}
\newcommand{\Wcal}{\cl{W}}
\newcommand{\Gm}{\mathbb{G}_m}
\newcommand{\GLt}{\textnormal{GL}_3}
\newcommand{\PGLt}{\textnormal{PGL}_2}
\newcommand{\im}{{\rm im}}
\newcommand{\pr}{{\rm pr}}
\newcommand{\sng}{^{\rm sing}}
\newcommand{\wt}[1]{\widetilde{#1}}
\newcommand{\wh}[1]{\widehat{#1}}
\newcommand{\ol}[1]{\overline{#1}}
\newcommand{\Apgl}{A_{\textnormal{PGL}_2}}
\newcommand{\Agl}{A_{\textnormal{GL}_3}}
\newcommand{\CHgl}{CH_{\textnormal{GL}_3}}
\newcommand{\Het}{{\rm H}^{\bullet}}
\newcommand{\Inv}{{\rm Inv}^{\bullet}}
\newcommand{\Spec}{{\rm Spec}}
\newcommand{\del}{\partial}
\begin{document}
	\title[Cohomological invariants of hyperelliptic curves of odd genus]{Cohomological invariants of the stack of hyperelliptic curves of odd genus}
	\author[A. Di Lorenzo]{Andrea Di Lorenzo}
	\address{Scuola Normale Superiore, Piazza dei Cavalieri 7, 56126 Pisa, Italy}
	\email{andrea.dilorenzo@sns.it}
	\date{\today}
\begin{abstract}
	We compute the cohomological invariants of $\Hcal_g$, the moduli stack of smooth hyperelliptic curves, for every odd $g$.
\end{abstract}
\maketitle
\tableofcontents
\section*{Introduction}
Characteristic classes are important topological invariants. They were first introduced by E. L. Stiefel and H. Whitney as a tool to study vector bundles on manifolds, and they were later generalized to principal bundles.

Fix a topological group $G$ and a cohomology theory $H$: then characteristic classes are a functorial way to associate to every principal $G$-bundle over a topological space $X$ a cohomology class in $H(X)$. In other terms, characteristic classes are natural transformations from the functor
$$ \Bcal G: {\rm Top}\longrightarrow {\rm Set},\quad X\longmapsto\left\{G\text{-bundles over }X\right\} $$
to the cohomology functor $X\mapsto H(X)$.

Cohomological invariants first appeared as a reformulation of this idea in an algebraic setting. More precisely, fix a positive number $p$, a field $k_0$ whose characteristic does not divide $p$ and an algebraic group $G$. Then we replace the category ${\rm Top}$ with ${\rm Field}/k_0$, the category of field extensions of $k_0$, and the cohomology theory $H$ with
$$\Het:{\rm Field}/k_0\longrightarrow {\rm Ring},\quad K\longmapsto \oplus_i H^i_{\'{e}t}(\Spec(K),\mu_p^{\otimes i}) $$
where $\mu_p$ denotes the group of $p^{\rm th}$-roots of unity.
We also substitute $\Bcal G$ with its algebro-geometrical counterpart, namely the classifying stack $\Bcal G$ or, better, its functor of points:
$$P_{\Bcal G}:{\rm Field}/k_0\longrightarrow {\rm Set},\quad K\longmapsto \left\{G\text{-torsors over }\Spec(K)\right\}   $$
Cohomological invariants are then defined by copying the definition of characteristic classes in topology:
\begin{df-no-num}[\cite{GMS}*{Def. 1.1}]\label{def:coh inv BG}
	A cohomological invariant of $\Bcal G$ with coefficients in $\FF_p$ is a natural transformation of functors $$P_{\Bcal G} \longrightarrow\Het:=\oplus_i H^i_{\'{e}t}(-,\mu_p^{\otimes i})$$ The graded-commutative ring of cohomological invariants of $\Bcal G$ is denoted $\Inv(\Bcal G)$.
\end{df-no-num} 

The first appearance of cohomological invariants, though not in this formulation, can be traced back to the seminal paper of Witt \cite{Wit} and since that they have been extensively studied (see \cite{GMS}).

In the recent work \cite{PirAlgStack}, Roberto Pirisi extended the notion of cohomological invariants from classifying stacks to smooth algebraic stacks over $k_0$:
\begin{df-no-num}[\cite{PirAlgStack}*{Def. 1.1}]
	Let $\Xcal$ be a smooth algebraic stack over $k_0$. Then a cohomological invariant of $\Xcal$ is a natural transformation 
	$$ P_{\Xcal}\longrightarrow \Het $$
	from the functor of points of $\Xcal$ to $\Het$ which satisfies a certain continuity condition (see \cite{PirAlgStack}*{Def. 1.1}).
	
	The graded-commutative ring of cohomological invariants of a smooth algebraic stack $\Xcal$ is denoted $\Inv(\Xcal)$.
\end{df-no-num} 
The main result of the present work is the following:
\begin{thm-no-num}
	Let $k_0$ be an algebraically closed field of characteristic $\neq 2$, and let $\Hcal_g$ denote the moduli stack of smooth hyperelliptic curves of odd genus $g\geq 3$ over $k_0$.
	
	Then the graded-commutative ring of cohomological invariants $\Inv(\Hcal_g)$ with coefficients in $\FF_2$ is generated, as a graded $\FF_2$-vector space, by classes $$1,x_1,w_2,x_2,...,x_{g+1},x_{g+2}$$ where the degree of each $x_i$ is $i$ and $w_2$ is the second  Stiefel-Whitney class coming from $\Inv(\cl{B}\PGLt)$.
\end{thm-no-num}

\begin{rmk-no-num}
	The cohomological invariants $\Inv(\Hcal_g)$ with coefficients in $\FF_p$, when $g$ is even or equal to three, has already been computed by Pirisi in \cite{PirCohHypEven} and \cite{PirCohHypThree} when the base field is algebraically closed and its characteristic does not divide $p$.
	
	When $g$ is odd and $p\neq 2$, the generators of $\Inv(\Hcal_g)$ can also be deduced from the main result of \cite{PirCohHypThree}. The importance of the case $p=2$ is due to the fact that under this assumption, the cohomological invariants of $\Hcal_g$ present a richer structure: why this happens is currently a work in progress of Pirisi together with the author.	
\end{rmk-no-num} 
The computation of cohomological invariants is based on the isomorphism between $\Inv([X/G])$, where $X$ is a smooth scheme endowed with an action of an algebraic group $G$, and the equivariant Chow group with coefficients $A_G^0(X,\Het)$ (see \cite[section 4]{PirAlgStack}). The Chow groups with coefficients were first introduced by Rost in \cite{Rost} as a generalization of ordinary Chow groups. Their usage as a tool for computing cohomological invariants motivates the hypotheses that we made on the characteristic of the base field (see remark \ref{rmk:assumptions}).

The main obstruction to extend the computations on $\Hcal_3$ contained in \cite{PirCohHypThree} to the stacks $\Hcal_g$, where $g$ is any odd integer, consists in proving that a certain morphism of $\PGLt$-equivariant Chow groups with coefficients is zero.

More precisely, let $\PP(1,2n)$ denote the projective space of binary forms of degree $2n$, endowed with the ${\rm GL}_2$-action
$$ A\cdot f(x,y):=\det(A)^n f(A^{-1}(x,y)) $$
This action descends to a well defined action of $\PGLt$ on the same scheme. Let $\Delta_{1,2n}\subset\PP(1,2n)$ be the closed, $\PGLt$-invariant subscheme parametrising singular forms.
Then, to extend the results of Pirisi on $\Inv(\Hcal_3)$, it is enough to prove the following:
\begin{kl-no-num}
    Let $n\geq 1$ be an integer and let $k_0$ be an algebraically closed field of characteristic $\neq 2$. Let $i:\Delta_{1,2n}\hookrightarrow\PP(1,2n)$ be the inclusion of the subscheme of singular forms into the projective space of binary forms of degree $2n$ over $k_0$. Then the pushforward homomorphism:
	\[i_*: \Apgl^0(\Delta_{1,2n},\Het)\longrightarrow\Apgl^1(\PP(1,2n),\Het)\]
	between equivariant Chow groups with coefficients in $\Het:=\oplus_i H^i_{\'{e}t}(-,\mu_2^{\otimes i})$ vanishes.
\end{kl-no-num}
What enables us to to prove the key lemma is the notion of $\GLt$-counterpart of a $\PGLt$-scheme:
\begin{df-no-num}\cite{Dil}*{def. 1.1}
	Let $X$ be a scheme of finite type over a field, endowed with a $\PGLt$-action. Then a $\GLt$-counterpart of $X$ is a scheme $Y$ endowed with an action of $\GLt$ such that $[Y/\GLt]\simeq [X/\PGLt]$.
\end{df-no-num}
If $Y$ is a $\GLt$-counterpart of a $\PGLt$-scheme, it follows almost immediately (see \cite{Dil}*{prop. 1.3}) that
$$ \Apgl(X,\Het)\simeq\Agl(Y,\Het) $$
To prove the key lemma we first find the $\GLt$-counterparts of the $\PGLt$-schemes $\PP(1,2n)$ and $\Delta_{1,2n}$, which we call respectively $\PP(V_n)_3$ and $D_3$. We then apply a new argument, not available in the $\PGLt$-equivariant setting, to show that the morphism
$$ i_*:\Agl^0(D_3,\Het)\longrightarrow\Agl^1(\PP(V_n)_3,\Het)$$
vanishes.
\subsection*{Structure of the paper}
In section \ref{sec:equi Rost Chow} we recall some basic properties of equivariant Chow groups with coefficients. In section \ref{sec:coh inv} we prove the main theorem of the paper, assuming for the moment the key lemma \ref{lm:key}, whose proof is postponed to section \ref{sec:main lemma}. The strategy of proof is similar to the one contained in \cite{PirCohHypThree}. The remainder of the paper is devoted to develop the theory necessary to prove the key lemma \ref{lm:key}.

In section \ref{sec:GL3 counterpart} we introduce the notion of $\GLt$-counterpart of a $\PGLt$-scheme. 

In section \ref{sec:on the divisor} we study the geometry of a certain divisor $D$ contained in a projective bundle over an open subscheme of the affine space of quadratic ternary forms.

The observations made in this section are then applied in section \ref{sec:Chow comp} in order to do some intersection theoretical computations useful to prove the key lemma \ref{lm:key}: the proof is completed in section \ref{sec:main lemma}. 

For the convenience of the reader, a more detailed description of the contents can be found at the beginning of every section.
\subsection*{Assumptions and notation}
We fix once and for all an algebraically closed field $k_0$, and every scheme is assumed to be of finite type over $\Spec(k_0)$. Every algebraic group is assumed to be linear.

If $X$ is a variety, with the notation $\Het(X)$ we will always mean the graded-commutative ring $\oplus_iH^i_{\'{e}t}(\xi_X,\mu_p^{\otimes i})$,  where $\xi_X$ is the generic point of $X$ and $p$ is a positive number that will be specified in every section.

Sometimes, we will write $\Het(R)$, where $R$ is a finitely generated $k_0$-algebra, to indicate $\Het(\Spec(R))$. Observe that $\Het(k_0)\simeq\FF_p$. 

The Chow groups with coefficients in $\Het$ will be denoted $A^i(-,\Het)$ or $A_i(-,\Het)$, whether we adopt the grading by codimension or dimension. At a certain point we will use the shorthand $A^i(-)$ to denote Chow groups with coefficients in $\Het$ of codimension $i$, and we will drop the index to indicate the direct sum of Chow groups with coefficients of every codimension. A Chow group with coefficients is said to be trivial when it is isomorphic to $\FF_p$.

The $G$-equivariant Chow groups with coefficients in $\Het$ will be denoted $A_G^i(-,\Het)$ or $A_G^i(-)$, and we will write $A^i_G$ to indicate $A^i_G(\Spec(k_0))$. Similar notations will be used for Chow groups $CH^i(-)$. We will write $CH^i(-)_{\FF_p}$ for the tensor product $CH^i(-)\otimes \FF_p$.

We will denote $\bA(n,d)$ the affine space of forms in $n+1$ variables of degree $d$, and $\PP(n,d)$ its projectivization.

Throughout the paper, a relevant role will be played by $\bA(2,2)$, the space of quadratic ternary forms. The subschemes of $\bA(2,2)$ parametrising forms of rank $r$ will be denoted $\bA(2,2)_r$, and the subschemes parametrising forms of rank in the interval $[b,a]$ will be denoted $\bA(2,2)_{[a,b]}$. 

Given a relative scheme $X\to\bA(2,2)$, its pullback to $\bA(2,2)_r$ or $\bA(2,2)_{[a,b]}$ will be denoted $X_r$ or $X_{[a,b]}$. At any rate, these definitions will be frequently repeated along the paper.

Similarly, we will denote $\PP(2,2)_r$ (resp. $\PP(2,2)_{[a,b]}$) the subscheme of $\PP(2,2)$ parametrising conics of rank $r$ (resp. of rank $r$ such that $a\geq r\geq b$). If $X$ is a scheme over $\PP(2,2)$, its pullback to $\PP(2,2)_r$ (resp. $\PP(2,2)_{[a,b]}$) will be denoted $X_r$ (resp. $X_{[a,b]}$).
\subsection*{Acknowledgements}
I wish to thank my advisor Angelo Vistoli for his constant support during this work and for introducing me to this subject. I am also indebted with Roberto Pirisi, for his patience in answering my questions: I think that the relevance of his ideas can be easily detected all along the paper. Finally, I wish to thank the anonymous referees for their careful reading and precious suggestions, which much improved the quality of the exposition.
\section{Equivariant Chow groups with coefficients}\label{sec:equi Rost Chow}
In this section, we fix a positive number $p$ and an algebraically closed base field $k_0$ whose characteristic does not divide $p$. Every scheme is assumed to be of finite type over $k_0$ and equidimensional. 
\subsection{Main definitions and properties}
We will collect together some basic definitions and useful properties of equivariant Chow groups with coefficients in $\Het$. Our interest in these groups is due to the following result:
\begin{thm} \cite[th. 4.9]{PirAlgStack}\label{thm: inv quotients equal to equivariant A0}
	If $X$ is a smooth, quasi-projective scheme endowed with a linearized action of an algebraic group $G$, then we have $$ A^0_G(X,\Het)\simeq \Inv([X/G]) $$
\end{thm}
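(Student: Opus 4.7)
The plan is to construct natural maps in both directions between $A^0_G(X,\Het)$ and $\Inv([X/G])$ and verify they are mutually inverse. The key input is the Edidin--Graham style approximation of equivariant Chow groups with coefficients: for any degree $i$, there is a linear representation $V$ of $G$ and an open $U\subset V$ on which $G$ acts freely and whose complement has arbitrarily high codimension, so that $A^i_G(X,\Het)\cong A^i((X\times U)/G,\Het)$. By Rost's original description, the right-hand side in degree zero is the subgroup of the stalk $\Het(k((X\times U)/G))$ consisting of classes whose residues at all codimension-one points vanish, i.e.\ the unramified cohomology of the smooth quotient variety.

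Forward map: starting from $\alpha\in A^0_G(X,\Het)$, realized as an unramified class on $(X\times U)/G$, I would define an invariant $\iota_\alpha$ on $[X/G]$ by pullback. Concretely, a $K$-point $\xi$ of $[X/G]$, i.e.\ a $G$-torsor $P\to\Spec K$ with a $G$-equivariant morphism $P\to X$, can be lifted étale-locally on $\Spec K$ to a $G$-equivariant morphism $P\to X\times U$; this descends to a $K$-point of $(X\times U)/G$, and $\iota_\alpha(\xi)$ is the pullback of $\alpha$ along it. Independence of the choice of lift should follow from the fact that two liftings differ by a map into $U$, combined with homotopy invariance of unramified cohomology on affine spaces.

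Backward map: given an invariant $\iota\in\Inv([X/G])$, I would evaluate it at the generic point of $(X\times U)/G$ viewed as a $K$-point of $[X/G]$ through the tautological morphism $(X\times U)/G\to [X/G]$, obtaining an element $\iota_0\in\Het(K)$ with $K=k((X\times U)/G)$. The essential step is to show that $\iota_0$ is unramified: for a codimension-one point with valuation $v$, the continuity condition built into the definition of cohomological invariants forces $\iota$ to specialize compatibly along $\Spec\OO_v$, and this compatibility translates, via Rost's residue formalism, into the vanishing of $\del_v(\iota_0)$.

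The main obstacle I anticipate is the careful interplay between the continuity condition, Rost's residue maps, and the approximation procedure. To see that the two constructions are inverse one needs a versal-torsor argument: every $G$-torsor over a field $K$ comes, up to a cohomologically harmless modification, from a $K$-point of the classifying variety $U/G$, so evaluating an invariant on the universal object over $(X\times U)/G$ already determines it on every $\xi$. The opposite composition—that passing from $\alpha$ to $\iota_\alpha$ and then back to the generic stalk returns $\alpha$—is essentially tautological, being nothing more than the pullback along the identity of the generic point of $(X\times U)/G$. Finally one must check that the resulting isomorphism is independent of the choices of $V$ and $U$, which follows from the standard functoriality of the Edidin--Graham approximation.
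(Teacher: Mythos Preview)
The paper does not prove this statement at all: it is quoted as \cite[theorem 4.9]{PirAlgStack} and used as a black box throughout. So there is no ``paper's own proof'' to compare against.

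That said, your outline is essentially the argument Pirisi gives in the cited reference, and it is sound in broad strokes. The identification of $A^0$ of a smooth variety with unramified cohomology, the evaluation of an invariant at the generic point of the approximating quotient $(X\times U)/G$, the use of the continuity axiom to force vanishing of residues, and the versal-torsor argument to recover an invariant from its generic value are exactly the ingredients. One small point deserves care in your forward map: for a given $K$-point $\xi$ of $[X/G]$ corresponding to a torsor $P\to\Spec K$, the lift to $(X\times U)/G$ is obtained not by an \'etale-local lift of $P$ itself but by choosing a $K$-rational point of the twisted variety $(P\times U)/G$, which is open in a vector bundle over $\Spec K$; such a point exists when $K$ is infinite, and the finite-field case is handled separately (or by passing to a larger $U$). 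With that adjustment your sketch matches the published proof.
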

First, let us sketch the construction of the standard Chow groups with coefficients in $\Het$. The paper of Rost \cite{Rost} is devoted to the foundation of the more general theory of Chow groups with coefficients in a cycle module $M$. Here we are only interested in the case $M=\Het$, though most of what we say is true for any cycle module. The proof that $\Het$ is a cycle module in the sense of \cite{Rost}*{def. 2.1} can be found in \cite{Rost}*{remarks 1.11 and 2.5}.

Another nice introduction to Chow groups with coefficients in $\Het$ is \cite{Guil}*{sec. 2}: in particular, the equivariant case is discussed in \cite{Guil}*{sec. 2.3}. 
\begin{rmk}\label{rmk:assumptions}
	The hypothesis that the characteristic of the base field does not divide $p$ is necessary for $\Het$ to be a cycle module. 
	
	Without this hypothesis, many things can go wrong: for instance, the proof that $\Het$ satisfies the axiom $D3$ (see \cite{Rost}*{remark 1.11}) does not work, as we do not have a norm residue homomorphism.
	
	This is a consequence of the fact that the sequence of sheaves in the \'{e}tale topology:
	\[ 0\longrightarrow \mu_p \longrightarrow \Gm \xrightarrow{\cdot p} \Gm \longrightarrow 0\]
	is not exact if we remove the assumption on the characteristic of the base field (see \cite[\href{https://stacks.math.columbia.edu/tag/03PM}{Tag 03PM}]{stacks-project}).
	
	A possible way to circumvent this issue could be to consider cohomology in the fppf or syntomic topology, but we have not checked all the details.
\end{rmk}
Let $X$ be a scheme, $d$ a positive integer and define:
$$ C_i(X,{\rm H}^d)=\oplus_{x\in X^{(i)}} H^d_{\'{e}t}(k(x),\mu_p^{\otimes d}))$$
where the sum is taken over all the points $x$ of $X$ having dimension equal to $i$. We define $C_i(X,\Het):=\oplus_{d\geq 0} C_i(X,{\rm H}^d)$. In this way $C_i(X,\Het)$ has a natural bigrading given by the cohomological degree $d$ and the dimension $i$. 

For every $i$ ranging from $0$ to the dimension of $X$ and for every $d\geq 1$, there exists a differential:
$$ \delta^d_i:C_i(X,{\rm H}^d)\longrightarrow C_{i+1}(X,{\rm H}^{d-1}) $$
whose precise definition can be found in \cite{Rost}*{(3.2)}. We set $\delta_i^0:=0$. By taking the direct sum over $d\geq 0$ of the $\delta_d^i$, we can define:
$$ \delta_i:C_i(X,\Het)\longrightarrow C_{i+1}(X,\Het) $$
The Chow groups with coefficients are then defined as:
\[A_i(X,\Het):=\ker(\delta_i)/{\rm im}(\delta_{i-1})\]

As we are assuming every scheme to be equidimensional, it makes sense to also introduce the codimensionally-graded Chow groups with coefficients $A^i(X,\Het)$: these are equal by definition to $A_{n-i}(X,\Het)$, where $n$ is the dimension of $X$.

Chow groups with coefficients have two natural gradings, one given by codimension and the other given by the cohomological degree: an element $\alpha$ has codimension $i$ and degree $d$ if it is in $A^i(X,{\rm H}^d):=\ker(\delta^d_{n-i})/\im(\delta^d_{n-i-1})$. 

The whole theory of Chow groups with coefficients has an equivariant counterpart, first introduced in \cite{Guil}*{sec. 2.3}. Let $G$ be an algebraic group acting on a scheme $X$. Suppose moreover that $X$ is quasi-projective with linearized $G$-action. Using the same ideas of \cite{EG}, one can define the equivariant groups $A^i_G(X,\Het)$ as follows: take a representation $V$ of $G$ such that $G$ acts freely on an open subscheme $U\subset V$ whose complement has codimension greater than $i+1$. By \cite{EG}*{lemma 9} such a representation always exists. Then we define:
$$A^i_G(X,\Het):= A^i((X\times U)/G,\Het)$$
The content of \cite{EG}*{prop. 23} assures us that the quotient $(X\times U)/G$ exists in the category of schemes, and by the double filtration argument used in the proof of \cite{EG}*{prop. 1} we see that the definition above does not depend on the choice of $U$.

We list now some properties of equivariant Chow groups with coefficients that will be frequently used throughout the paper:
\begin{prop}\label{pr:properties}
	Let $X$ and $Y$ be equidimensional, quasi-projective schemes of finite type over $k_0$, endowed with a linearized $G$-action. Then we have:
	\begin{enumerate}
		\item $ CH_i^G(X)\otimes \FF_p=A_i^G(X,{\rm H}^0) $.
		
		\item \emph{Proper pushforward}: every $G$-equivariant, proper morphism $f:X\to Y$ induces a homomorphism of groups
		\[f_*:A_i^G(X,\Het)\longrightarrow A_i^G(Y,\Het)\]
		which preserves the cohomological degree.
		
		\item \emph{Flat pullback}:  every $G$-equivariant, flat morphism $f:X\to Y$ of relative constant dimension induces a homomorphism of groups
		\[f^*:A^i_G(Y,\Het)\longrightarrow A^i_G(X,\Het)\]
		which preserves the cohomological degree.
		
		\item \emph{Localization exact sequence}: given a closed, $G$-invariant subscheme $Z\xhookrightarrow{i} X$ whose open complement is $U\xhookrightarrow{j} X$, there exists a long exact sequence
		$$\cdots\to A_i^G(X,\Het) \xrightarrow{j^*} A_i^G(U,\Het) \xrightarrow{\del} A_{i-1}^G(Z,\Het) \xrightarrow{i_*} A_{i-1}^G(X,\Het) \to \cdots $$
		The boundary homomorphism $\del$ has cohomological degree $-1$, whereas the other homomorphisms have cohomological degree zero.
		
		\item \emph{Compatibility}: given a cartesian square of $G$-schemes
		$$ \xymatrix{
			Y \ar[r]^i \ar[d] & X \ar[d] \\
			Y' \ar[r]^{i'} & X' }$$
		where all the morphisms are $G$-equivariant closed embeddings, we get a commutative square
		$$ \xymatrix{
			A_k^G(Y'\setminus Y,\Het) \ar[r]^{i''_*} \ar[d]^{\del} & A_k^G(X'\setminus X,\Het) \ar[d]^{\del} \\
			A_{k-1}^G(Y,\Het) \ar[r]^{i_*} & A_{k-1}^G(X,\Het) } $$
		where $i''$ is the restriction of $i'$ to $Y'\setminus Y$.
		
		\item \emph{Homotopy invariance}: if $\pi:E\to X$ is a $G$-equivariant, finite rank vector bundle, then we have an isomorphism
		$$\pi^*: A^i_G(X,\Het)\simeq A^i_G(E,\Het) $$
		which preserves the cohomological degrees.
		
		\item \emph{Projective bundle formula}: If $\PP(E)\to X$ is the projectivization of a $G$-equivariant , finite rank vector bundle, then for $i< {\rm rk}(E)$ we have: 
		$$ A^i_G(\PP(E),\Het)\simeq \oplus_{j=0}^{i} A^{j}_G(X,\Het)$$
		The isomorphism above preserves the cohomological degrees.
		
		\item \emph{Ring structure}: if $X$ is smooth, then $A_G(X,\Het)$ inherits the structure of a graded-commutative $\FF_p$-algebra, where the graded-commutativity should be understood in the following sense: if $\alpha$ has codimension $i$ and degree $d$, and $\beta$ has codimension $j$ and degree $e$, then $\alpha\cdot\beta=(-1)^{de}\beta\cdot\alpha$, and the product has codimension $i+j$ and degree $d+e$.
	\end{enumerate}
\end{prop}

\begin{proof}[Sketch of proof]
	All the properties above, which are formulated for equivariant Chow groups with coefficients, follow from the analogous properties for the non-equivariant ones, hence it is enough to prove (1)-(7) in this second setting.
	
	To prove (1), observe that $\ker(\delta_i^0)=C_i(X,{\rm H}^0)$. We have $$H^0_{\'{e}t}(k(x),\FF_p)=\FF_p$$ hence $C_i(X,{\rm H}^0)=Z_i(X)\otimes\FF_p$, the group of $i$-dimensional cycles mod $p$. 
	
	Recall that $H^1_{\'{e}t}(k(y),\mu_p)=k(y)^*/(k(y)^*)^p$, so that the group $C_{i-1}(X,{\rm H}^1)$ can be seen as a quotient of the group of non-zero rational functions on $i-1$-dimensional subvarieties of $X$. 
	
	Unwinding the definition of the differential $\delta_{i-1}^1$ (see \cite{Rost}*{(3.2)}), we deduce that $\delta_{i-1}^1(\overline{\varphi})={\rm div}(\overline{\varphi})$, where $\overline{\varphi}$ is the equivalence class in $k(y)^*/(k(y)^*)^p$ of a rational function $\varphi$ defined over a subvariety $Y\subset X$ whose generic point is $y$ , and ${\rm div}(\overline{\varphi})$ is the associated divisor, which is a well defined element of $Z_i(X)\otimes\FF_p$.
	This implies that:
	\[ (Z_i(X)/\sim_{\rm rat})\otimes\FF_p \simeq \ker(\delta_i^0)/\im(\delta_{i-1}^1) \]
	where $\sim_{\rm rat}$ is the rational equivalence relation, and proves (1).
	
	The proofs of (2)-(5) are the content of \cite{Rost}*{sec. 4}, (6) is \cite{Rost}*{prop. 8.6}, (7) is \cite{PirCohHypEven}*{prop. 2.4} and (8) is \cite{Rost}*{th. 14.6}.
\end{proof}
There is also a well defined theory of equivariant Chern classes for equivariant Chow groups with coefficients in $\Het$, which resembles very much the theory of Chern classes for the usual Chow groups. The foundation of this theory can be found in \cite{PirCohHypEven}*{sec. 2.1},

Let $X$ be a quasi-projective scheme endowed with a linearized $G$-action and let $E\to X$ be an equivariant vector bundle of rank $r$. Then for every $i$ the vector bundle $E$ induces a degree-preserving homomorphism of equivariant Chow groups with coefficients:
\[ c_i^G(E):A_*^G(X,\Het) \longrightarrow A_{*-i}^G(X,\Het)\]
which is the $i^{\rm th}$ \emph{equivariant Chern class} of $E$.

Using the language of Chern classes, the projective bundle formula \ref{pr:properties}.(7) can be restated (see \cite{PirCohHypEven}*{prop. 2.4}) by saying that, if $\pi:\PP(E)\to X$ is the projectivization of an equivariant vector bundle, then we have:
\[ A^i_G(\PP(E),\Het)\simeq \oplus_{j=0}^{i} c_1^G(\OO_{\PP(E)}(1))^{i-j}(\pi^*A_G^{j}(X,\Het)) \]

If $X$ is smooth, there exist cycles $\gamma_i$ in $A_G^i(X,{\rm H}^0)$ such that the equivariant Chern classes are equal to the multiplication by $\gamma_i$ (see \cite{PirCohHypEven}*{cor. 2.6}). Therefore, in this case it makes perfect sense to think of equivariant Chern classes as elements in $A_G(X,\Het)$.

In particular, for $\pi:\PP(E)\to X$ the projectivization of an equivariant vector bundle over a smooth scheme, we can consider the element $h=c_1^G(\OO(1))$ in $A^1_G(\PP(E),{\rm H}^0)$ and proposition \ref{pr:properties}.(7) can be reformulated as follows:
$$ A_G(\PP(E),\Het)\simeq A_G(X,\Het)[h]/(f) $$
where $f$ is an element of $A(X,\Het)[h]$ monic of codimensional degree equal to the rank of $E$. The isomorphism preserves the cohomological degree.
\subsection{Examples}
We collect here some computations of equivariant Chow groups with coefficients which will be useful for our purposes.
\begin{prop}\cite[prop. 2.11]{PirCohHypEven}\cite[prop. 3.1, prop. 3.2]{PirCohHypThree}\label{pr:Rost of BPGL_2 and P1}
	We have:
	\begin{enumerate}
		\item Let $E$ be the standard representation of $\GLt$, regarded as a $\GLt$-equivariant vector bundle of rank $3$ over $\Spec(k_0)$. Then $$\Agl(\Spec(k_0),\Het)=\FF_p[c_1,c_2,c_3]$$
		where $c_i:=c_i^{\GLt}(E)$. In particular, $\Agl(\Spec(k_0),\Het)$ is concentrated in degree $0$.
		\item Fix $p=2$ and let $E$ be the standard representation of $\GLt$, on which $\PGLt$ acts via the adjoint representation. 
		Set $c_i:=c_i^{\PGLt}(E)$. Then $$\Apgl(\Spec(k_0),\Het)=\FF_2[c_2,c_3] \oplus\FF_2[c_2,c_3]\cdot w_2 \oplus \FF_2[c_2,c_3]\cdot \tau_1 $$ 
		as an $\FF_2$-vector space. The generator $w_2$ has codimension $0$ and degree $2$, the generator $\tau_1$ has codimension $1$ and degree $1$ and $c_1=0$.
		\item Fix $p=2$. Then $\Apgl^0(\PP^1,\Het)\simeq \FF_2$.
	\end{enumerate}
\end{prop}
\begin{rmk}
	Proposition \ref{pr:properties}.(7) and \ref{pr:Rost of BPGL_2 and P1}.(3) are not in contradiction, as $\PP^1$ is not the projectivization of any $\PGLt$-representation of rank 2.
\end{rmk}
Take a quasi-projective scheme $X$ endowed with a linearized $G$-action and let $L\to X$ be an equivariant line bundle. Define $L^*$ as the complement in $L$ of the zero section. Observe that $L^*$ inherits the $G$-action and that it has a natural structure of $\Gm$-torsor, where $\Gm$ acts by scalar multiplication. 

This second fact can also be seen by noting that $L^*\simeq \underline{\rm Isom}_X(L,\bA^1_X)$ and that $\Gm$ acts on the second scheme as follows: given a morphism $S\to X$, a trivialization $\varphi:L_S\simeq \bA^1_S$ and an element $\lambda:\bA^1_S\simeq\bA^1_S$ of $\Gm(S)$, we can define $\lambda\cdot\varphi:=\lambda\circ\varphi$.

The next result tells us that the equivariant Chow groups with coefficients of $L^*$ depend only on the equivariant Chow groups with coefficients of $X$ and on the morphism $c_1^G(L)$.
\begin{prop}\label{pr:Gm torsor formula}
	Let $X$ be an equidimensonal, quasi-projective scheme over $k_0$, endowed with a linearized $G$-action, and let $L\to X$ be an equivariant line bundle, with associated $\Gm$-torsor $L^*$. Then we have:
	\[A_G(L^*,\Het)\simeq \left(A_G(X,\Het)/\im(c_1^G(L))\right)\oplus \ker(c_1^G(L))[1]\]
	as $\FF_p$-vector spaces.
\end{prop}
\begin{proof}
	Here $A_G^n(-)$ stands for $A_G^n(-,\Het)$.
	Using proposition \ref{pr:properties}.(4) applied to the zero section $i:X\hookrightarrow L$, we get:
	\[\cdots\to A_G^{n-1}(X)\xrightarrow{i_*} A_G^n(L) \to A_G^n(L^*) \to A_G^n(X) \xrightarrow{i_*} A_G^{n+1}(X)\to\cdots\]
	From proposition \ref{pr:properties}.(6), we know that there exists an isomorphism $(\pi^*)^{-1}:A_G(L)\simeq A_G(X)$. By definition of the Chern classes (see in particular \cite{PirCohHypEven}*{def. 2.2}) we have $c_1^G(L)=(\pi^*)^{-1}\circ i_*$, so that the exact sequence above can be rewritten as:
	\[\cdots\to A_G^{n-1}(X)\xrightarrow{c_1^G(L)} A_G^n(X) \to A_G^n(L^*) \to A_G^n(X) \xrightarrow{c_1^G(L)} A_G^{n+1}(X)\to\cdots\]
	This implies the formula for $A_G(L^*)$.
\end{proof}
\section{Cohomological invariants of $\Hcal_g$}\label{sec:coh inv}
In this section, we fix $p=2$, so that $\Het(-)=\oplus_{d\geq 0} H_{\'{e}t}^d(-,\mu_2^{\otimes d})$. We keep assuming the base field $k_0$ to be algebraically closed and of characteristic $\neq 2$. We will also adopt the shorter notation $A_G(-)$ instead of $A_G(-,\Het)$ to denote Chow groups with coefficients.

A \textit{family of hyperelliptic curves of genus }$g$ is a pair $(C\to S,\iota)$ where $C\to S$ is a proper and smooth morphism whose fibres are curves of genus $g$, and $\iota\in{\rm Aut}(C)$ is an involution such that the quotient $C/\langle\iota\rangle\to S$ is a proper and smooth morphism whose fibres are curves of genus $0$.

Therefore, the stack of hyperelliptic curves of genus $g$ is defined as the stack in groupoids $\Hcal_g$ over the site ${\rm Sch}/k_0$ whose objects are families of hyperelliptic curves.
In this section we will prove our main theorem, which is the following:
\begin{thm}\label{thm:coh inv of Hg}
	Let $k_0$ be an algebraically closed field of characteristic $\neq 2$, and let $\Hcal_g$ denote the moduli stack of smooth hyperelliptic curves of odd genus $g\geq 3$ over $k_0$.
	
	Then the graded-commutative ring of cohomological invariants $\Inv(\Hcal_g)$ with coefficients in $\FF_2$ is generated, as a graded $\FF_2$-vector space, by classes $$1,x_1,w_2,x_2,...,x_{g+1},x_{g+2}$$ where the degree of each $x_i$ is $i$ and $w_2$ is the second  Stiefel-Whitney class coming from $\Inv(\cl{B}\PGLt)$.
\end{thm}
The case $g=3$ is \cite{PirCohHypThree}*{th. 0.1}. Actually, the only obstruction to generalize the result contained there to any odd genus is given by \cite[corollary 3.9 ]{PirCohHypThree}, where the assumption $g=3$ is strictly necessary. Once one generalizes that corollary, the computation of the cohomological invariants is basically done. 

Therefore, what we present here is essentially a rewriting of the proof contained in \cite{PirCohHypThree}*{sec. 3}: the only difference is in the key lemma \ref{lm:key}, which is a generalization of \cite[corollary 3.9 ]{PirCohHypThree}. The proof of this key result, which is rather non-trivial, is postponed to section \ref{sec:main lemma}, as we need to develop more theory in order to complete it.

\subsection{Setup} Let $\bA(1,n)$ be the affine space of binary forms of degree $n$ and let $X_n$ be the open subscheme parametrising forms with distinct roots. In \cite{ArsVis}*{cor. 4.7} the authors gave the following presentation of $\Hcal_g$ as a quotient stack, when $g\geq 3$ is an odd number:
$$\Hcal_g \simeq [X_{2g+2}/(\PGLt\times\Gm)] $$
The action of $\PGLt\times\Gm$ on $X_{2g+2}$ descends from the action of ${\rm GL}_2\times\Gm$ defined by the formula:
$$(A,\lambda)\cdot f(x,y):=\lambda^{-2}\det(A)^{g+1}f(A^{-1}(x,y))$$
This presentation and theorem \ref{thm: inv quotients equal to equivariant A0} imply that:
\begin{equation}\label{eq:inv}
	\Inv(\Hcal_g)\simeq A^0_{\PGLt\times\Gm}(X_{2g+2})
\end{equation}
Therefore, to obtain the cohomological invariants of $\Hcal_g$, it is enough to compute the codimension $0$ part of an equivariant Chow ring with coefficients.

Let $\PP(1,2n)$ be the projective space of binary forms of degree $2n$, and denote $\Delta_{1,2n}$ the divisor in $\PP(1,2n)$ parametrising singular forms. We are first going to compute $A^0_{\PGLt}(\PP(1,2g+2)\setminus\Delta_{1,2g+2})$, and then we will use the fact that
$$ X_{2g+2}\longrightarrow \PP(1,2g+2)\setminus \Delta_{1,2g+2} $$
is a $\PGLt\times\Gm$-equivariant $\Gm$-torsor to deduce a presentation of $A^0_{\PGLt\times\Gm}(X_{2g+2})$.

\subsection{Proof of the main theorem} 
As already stated at the beginning of this section, we will always be assuming $p=2$. The proposition below is the starting point to determine $\Inv(\Hcal_g)$.
\begin{prop}\label{pr: inv of P2n minus Delta}\cite{PirCohHypThree}*{cor. 3.10}
	The graded-ring $\Apgl^0(\PP(1,2n)\setminus\Delta_{1,2n})$ is freely generated as $\FF_2$-module by $n+1$ elements $x_1,...,x_n,w_2$ where the degree of $x_i$ is $i$ and $w_2$ is the second Stiefel-Whitney class coming from the cohomological invariants of $\cl{B}\PGLt$.
\end{prop}
The proof of this proposition is by induction on $n$. To set up the induction argument, we need the following technical lemma, which is of fundamental importance:
\begin{kl}\label{lm:key}
	Let $n\geq 1$ be an integer and let $k_0$ be an algebraically closed field of characteristic $\neq 2$. Let $i:\Delta_{1,2n}\hookrightarrow\PP(1,2n)$ be the inclusion of the subscheme of singular forms into the projective space of binary forms of degree $2n$ over $k_0$. Then the pushforward homomorphism:
	\[i_*: \Apgl^0(\Delta_{1,2n},\Het)\longrightarrow\Apgl^1(\PP(1,2n),\Het)\]
	between equivariant Chow groups with coefficients in $\Het:=\oplus_i H^i_{\'{e}t}(-,\mu_2^{\otimes i})$ vanishes.
\end{kl}
The content of the key lemma above is equivalent to saying that for $n\geq 1$ the boundary morphism $$\del:\Apgl^0(\PP(1,2n))\to\Apgl^0(\Delta_{1,2n})$$ is surjective.

This is proved by Pirisi only for $n\leq 8$ (see \cite{PirCohHypThree}*{cor. 3.9}): he shows that, for every $n\geq 1$ there exists an element $g_{2n}$ in $\Apgl(\PP(1,2n))$ such that, for every $\alpha$ in $\Apgl^0(\Delta_{1,2n})$, we have $g_{2n}\cdot i_*\alpha=0$ (what here is called $\PP(1,2n)$ is denoted $P^{2n}$ by Pirisi).

From this, he deduces that if $i_*\alpha\neq0$ then $g_{2n}\cdot c_1^G(\OO(1))$ is divided by another element $f_{2n}$ of $\Apgl(\PP(1,2n))$, and this cannot be the case for $2n\leq 8$ but can be true for $2n>8$: the last two assertions follow from the explicit construction of $f_{2n}$ and $g_{2n}$ contained in \cite{PirCohHypThree}*{lm. 3.7, pr. 3.8}.

In this way, Pirisi proves that $i_*=0$ for $2n\leq 8$, which implies the key lemma \ref{lm:key} for $n\leq 4$. This allows him to prove proposition \ref{pr: inv of P2n minus Delta} only for this values of $n$, and to compute $\Inv(\Hcal_g)$ only for $g=3$.

In section \ref{sec:main lemma} we will prove key lemma \ref{lm:key} for every $n\geq 1$, adopting a completely different strategy. For the remainder of the section, we assume key lemma \ref{lm:key}. 

We will also need the following results:
\begin{lm}\label{lm:setup induction}
	We have:
	\begin{enumerate}
		\item for $n\geq 2$, there is an isomorphism $$\Apgl^0(\Delta_{1,2n})\simeq\Apgl^0((\PP(1,2n-2)\setminus\Delta_{1,2n-2})\times\PP^1)$$
		\item for $n\geq 1$, the pullback morphism $$\Apgl^0(\PP(1,2n)\setminus\Delta_{1,2n})\to\Apgl^0((\PP(1,2n)\setminus\Delta_{1,2n})\times\PP^1)$$ is surjective with kernel generated by $w_2$, the Stiefel-Whitney class coming from the cohomological invariants of $\PGLt$.
	\end{enumerate}
\end{lm}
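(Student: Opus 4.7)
\emph{Part (1).} I would handle the three parts in turn, with (1) being the real difficulty. For (1), the localization exact sequence for the closed immersion $i:\Delta_{1,2n}\hookrightarrow\PP(1,2n)$ with complement $U$ gives, in each degree,
\[
\Apgl^0(U)\xrightarrow{\partial}\Apgl^0(\Delta_{1,2n})\xrightarrow{i_*}\Apgl^1(\PP(1,2n)),
\]
so surjectivity of $\partial$ is equivalent to the vanishing of the pushforward $i_*$. This is precisely the ``main obstruction'' highlighted in the introduction, to be proved in sections \ref{sec:GL3 counterpart}--\ref{sec:main lemma} via the $\GLt$-counterpart of $(\Delta_{1,2n},\PP(1,2n))$. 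I take it as a black box here.

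\emph{Part (2).} Set $Y=(\PP(1,2n-2)\setminus\Delta_{1,2n-2})\times\PP^1$ and consider the $\PGLt$-equivariant multiplication map
\[
\mu:Y\longrightarrow\Delta_{1,2n},\qquad (g,p)\longmapsto g\cdot\ell_p^2,
\]
where $\ell_p$ is a linear form vanishing at $p$. A direct tangent-space computation shows $\mu$ is injective on points and on tangent vectors, hence a locally closed immersion. Its image consists of the forms in $\Delta_{1,2n}$ admitting a factorization $\ell^2\cdot g$ with $g$ having only simple roots; its complement is the locus of forms with at least two distinct double roots, a union of strata of codimension $\geq 1$ in $\Delta_{1,2n}$. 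I would deduce the required iso in two steps: first, pullback yields $\Apgl^0(\Delta_{1,2n})\simeq\Apgl^0(\mu(Y))$ by controlling the residues along the two-double-root strata, using the inductive hypothesis for smaller $n$; second, the iso $\mu:Y\xrightarrow{\sim}\mu(Y)$ gives $\Apgl^0(\mu(Y))\simeq\Apgl^0(Y)$.

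\emph{Part (3).} Writing $\PP^1\simeq\PGLt/B$ with $B\subset\PGLt$ a Borel subgroup, for any $\PGLt$-scheme $X$ one has $\Apgl^0(X\times\PP^1)\simeq A^0_B(X,\Het)$, and the pullback becomes the restriction along $B\hookrightarrow\PGLt$. Since $B$ is an extension of $\Gm$ by the special group $\mathbb{G}_a$, $A^0_B(-,\Het)\simeq A^0_{\Gm}(-,\Het)$. The class $w_2$ dies under this restriction as its image lies in $\Inv^2(\Bcal\Gm)=0$, so $(w_2)$ is contained in the kernel. Using the free $\CHpgl\otimes\FF_2$-module structure of $\Apgl$ from proposition \ref{pr:Rost of BPGL_2 and P1}(2), and applying a standard comparison argument relatively over $X$, one verifies that $(w_2)$ is the whole kernel and that the restriction is surjective.

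\emph{Main obstacle.} Part (1) is the crux and is the reason the paper develops the $\GLt$-counterpart machinery; without it, the argument of \cite{PirCohHypThree} does not extend beyond $g=3$. Part (2) is delicate because the two-double-root stratum has codimension $1$ inside $\Delta_{1,2n}$, so the residue there must be killed via the inductive hypothesis; part (3) is mostly a matter of carefully tracking the module structure over $\CHpgl\otimes\FF_2$.
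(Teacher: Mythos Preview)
Your proposal is correct and matches the paper's approach. For part (1) you correctly reduce surjectivity of $\partial$ to the vanishing of $i_*:\Apgl^0(\Delta_{1,2n})\to\Apgl^1(\PP(1,2n))$ and defer this to the $\GLt$-counterpart machinery of sections~\ref{sec:GL3 counterpart}--\ref{sec:main lemma}, exactly as the paper does (and you also silently fix the typo in the statement, where the source of $\partial$ should be $\PP(1,2n)\setminus\Delta_{1,2n}$). For parts (2) and (3) the paper gives no argument at all, simply noting that once (1) is known for all $n$ the proofs in \cite{PirCohHypThree} go through verbatim; your sketches via the multiplication map $\mu$ and via the identification $[X\times\PP^1/\PGLt]\simeq[X/B]$ are consistent with the arguments in that reference, so there is nothing to compare against here beyond saying your outline is sound.
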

\begin{proof}
	Before proceeding with the proof, let us warn the reader that what is called $\PP(1,2n)$ here is denoted $P^{2n}$ in \cite{PirCohHypThree}.
	
	From \cite{PirCohHypThree}*{pr. 2.2} we know that $$\Apgl^0(\Delta_{1,2n}\setminus\Delta_{2,2n})\simeq \Apgl^0((\PP(1,2n-2)\setminus\Delta_{1,2n-2})\times\PP^1)$$ where $\Delta_{2,2n}$ is the closed subscheme of $\PP(1,2n)$ parametrising forms which have two double roots. To prove (1), it is then enough to show that $\Apgl^0(\Delta_{1,2n})\simeq \Apgl^0(\Delta_{1,2n}\setminus\Delta_{2,2n})$.
	
	The key lemma \ref{lm:key} together with \cite{PirCohHypThree}*{cor. 3.5} imply that the condition $S_1(2n)$ (resp. $S_2(2n)$) of \cite{PirCohHypThree}*{pg. 14} hold true for every $n\geq 1$ (resp. for every $n\geq 2$). 
	
	Condition $S_1(2n)$ is exactly (2), and condition $S_2(2n)$ says that the pullback morphism $\Apgl^0(\Delta_{1,2n})\to\Apgl^0(\Delta_{1,2n}\setminus\Delta_{2,2n})$ is an isomorphism, from which we deduce (1). 
\end{proof}

\begin{proof}[Proof of prop. \ref{pr: inv of P2n minus Delta}]	
	The key lemma \ref{lm:key}, applied to the localization exact sequence (see proposition \ref{pr:properties}.(4)) associated to the closed subscheme $\Delta_{1,2n}$, gives us the following short exact sequence of $\FF_2$-vector spaces:
	$$ 0\to \Apgl^0(\PP(1,2n))\to \Apgl^0(\PP(1,2n)\setminus\Delta_{1,2n})\xrightarrow{\del}\Apgl^0(\Delta_{1,2n})\to 0$$
	From this we deduce:
	$$ \Apgl^0(\PP(1,2n)\setminus\Delta_{1,2n})\simeq \Apgl^0(\PP(1,2n))\oplus\Apgl^0(\Delta_{1,2n})[1] $$
	where the notation $\Apgl(\Delta_{1,2n})[1]$ means that everything is degree-shifted by one.
	
	The projective space $\PP(1,2n)$ is the projectivization of a $\PGLt$-representation, hence using the projective bundle formula (proposition \ref{pr:properties}.(7)) we deduce $$\Apgl^0(\PP(1,2n))\simeq\Apgl^0$$ and the $\FF_2$-vector space on the right is known (see proposition \ref{pr:Rost of BPGL_2 and P1}.(2)). 
	
	To compute $\Apgl^0(\Delta_{1,2n})$, we proceed by induction on $n$. To prove the base case $n=1$, observe that $\Delta_{1,2}\simeq\PP^1$, and $\Apgl^0(\PP^1)$ is trivial by proposition \ref{pr:Rost of BPGL_2 and P1}.(3).
	
	Lemma \ref{lm:setup induction}.(1) tells us that:
	\[\Apgl^0(\Delta_{1,2n})\simeq \Apgl^0((\PP(1,2n-2)\setminus\Delta_{1,2n-2})\times\PP^1) \]
	and lemma \ref{lm:setup induction}.(2) says that the $\FF_2$-vector space on the right is isomorphic to $\Apgl^0(\PP(1,2n-2)\setminus\Delta_{1,2n-2})/\FF_2\cdot w_2$. Using the inductive hypothesis, we get the desired result.
\end{proof}

As announced at the beginning of the section, we will use proposition \ref{pr: inv of P2n minus Delta} to compute $A^0_{\PGLt\times\Gm}(X_{2n})$. We will use the following result:
\begin{lm}\label{lm:from PGL2 to PGLxGm} \cite[proposition 2.3]{PirCohHypThree}
	Let $Y$ be a scheme endowed with an action of $\PGLt$, and let $\Gm$ act trivially on it. Then
	$$A_{\PGLt\times\Gm}(Y)\simeq \Apgl(Y)[t]$$
	where $t$ has codimension $1$ and degree $0$. In particular, we get an isomorphism on the codimension zero part.
\end{lm}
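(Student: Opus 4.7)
The plan is to use standard equivariant approximation combined with the projective bundle formula for Chow groups with coefficients recalled at the end of section \ref{sec:equi Rost Chow}. The key observation is that when $\Gm$ acts trivially on $Y$, the mixed quotient splits as a product in which one factor is a trivial projective bundle, so the statement reduces to the projective bundle formula for a trivial bundle $X\times\PP^n\to X$.

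To make this precise, I would fix a codimension $i$ and choose a representation $W$ of $\PGLt$ with an open subset $U_W$ on which $\PGLt$ acts freely and whose complement has codimension at least $i+2$. I would then let $\Gm$ act by scalars on $\bA^{n+1}$, so that its free locus is $\bA^{n+1}\setminus\{0\}$. For $n$ sufficiently large, $U_W\times(\bA^{n+1}\setminus\{0\})\subset W\oplus\bA^{n+1}$ is an admissible approximation for $\PGLt\times\Gm$ in codimension $\leq i$. Since $\Gm$ acts trivially on $Y$ and $\PGLt$ acts trivially on $\bA^{n+1}$, writing $X_W=(Y\times U_W)/\PGLt$ the mixed quotient decomposes as
\[
\bigl(Y\times U_W\times(\bA^{n+1}\setminus\{0\})\bigr)/(\PGLt\times\Gm)\;\simeq\;X_W\times\PP^n,
\]
which is the trivial projective bundle $\PP_{X_W}(\OO^{n+1})$.

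Applying the projective bundle formula recalled in section \ref{sec:equi Rost Chow} to this bundle, with $t=c_1(\OO(1))$ lying in $A^1(\PP^n,{\rm H}^0)$ (hence of codimension $1$ and degree $0$), one obtains
\[
A^*(X_W\times\PP^n,\Het)\;\simeq\;A^*(X_W,\Het)[t]\big/(t^{n+1}),
\]
because for the trivial bundle of rank $n+1$ the monic relation is simply $t^{n+1}$ (all Chern classes of $\OO^{n+1}$ above $c_0$ vanish). For $n$ large compared to $i$ the truncation imposes no relation in codimension $\leq i$, so passing to the limit gives $\Apglgm(Y)\simeq\Apgl(Y)[t]$ as graded rings, with $t$ of codimension $1$ and degree $0$.

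I do not anticipate any serious obstacle: the argument is a direct application of two standard tools (equivariant approximation and the projective bundle formula). The only mild point of care is verifying that the projective bundle formula cited in section \ref{sec:equi Rost Chow} yields a ring isomorphism, rather than merely an isomorphism of graded $\FF_p$-modules, and that one may identify the relation $f$ as $t^{n+1}$ in the trivial bundle case, both of which are standard.
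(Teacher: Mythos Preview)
Your argument is correct and is the standard proof of this fact. The paper does not give its own proof but simply refers to \cite{PirCohHypThree}*{proposition 2.3}; the approximation-plus-projective-bundle argument you outline is precisely what lies behind that reference.
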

We observed that $X_{2n}\to\PP(1,2n)\setminus\Delta_{1,2n} $
is a $\PGLt\times\Gm$-equivariant $\Gm$-torsor. Let $L$ be the line bundle associated to $X_{2n}$.
\begin{lm}\label{lm:new inv coming from Gm-torsor} 
	The codimension zero part of $\ker(c_1^{\PGLt\times\Gm}(L))$ is generated as an $\FF_2$-vector space by a single element $x_n$ of degree $n$.
\end{lm}
\begin{proof}
	The proof of \cite{PirCohHypThree}*{th. 3.12}, once we know the key lemma \ref{lm:key} for every $n\geq 1$, works for all $n\geq 1$.
\end{proof}
We now have all the elements necessary to prove the main result of the paper.
\begin{proof}[Proof of theorem \ref{thm:coh inv of Hg}]
	We know that $\Inv(\Hcal_g)\simeq A^0_{\PGLt\times\Gm}(X_{2g+2})$ (see (\ref{eq:inv})). Applying proposition \ref{pr:Gm torsor formula} to the $\Gm$-torsor $X_{2g+2}\to\PP(1,2g+2)\setminus\Delta_{1,2g+2}$, we get:
	\[ A_{\PGLt\times\Gm}^0(X_{2g+2})\simeq A^0_{\PGLt\times\Gm}(\PP(1,2g+2)\setminus\Delta_{1,2g+2})\oplus \ker(c_1^{\PGLt\times\Gm}(L))[1] \]
	where $L$ is the line bundle on $\PP(1,2g+2)\setminus\Delta_{1,2g+2}$ associated to the $\Gm$-torsor $X_{2g+2}$, and we consider only the codimension zero part of $\ker(c_1^{\PGLt\times\Gm}(L))$.
	
	The first summand is isomorphic to $\Apgl^0(\PP(1,2g+2)\setminus\Delta_{1,2g+2})$ by lemma \ref{lm:from PGL2 to PGLxGm}, and this $\FF_2$-vector space can be computed using proposition \ref{pr: inv of P2n minus Delta} with $n=g+1$.
	
	The second summand is also known, thanks to lemma \ref{lm:new inv coming from Gm-torsor}. Putting all together, we get the desired conclusion.
\end{proof}
\section{$\GLt$-counterpart of $\PGLt$-schemes}\label{sec:GL3 counterpart}
In this section we introduce the notion of $\GLt$-counterpart of a $\PGLt$-scheme (definition \ref{def:GLt counterpart of a morphism}). We also state a simple result (proposition \ref{prop: chow diagrams}) which enables us to replace morphisms between $\PGLt$-equivariant Chow groups with coefficients of $\PGLt$-schemes with morphisms between $\GLt$-equivariant Chow groups with coefficients of their $\GLt$-counterparts. 

This fact will be quite relevant in the proof of the key lemma \ref{lm:key}.

As already said in the introduction, we will adopt the following notation: for $n\geq 1$ an integer, we will denote $\bA(n,d)$ the affine space of homogeneous forms in $n+1$ variables of degree $d$, and $\PP(n,d)$ will stand for the projectivization of $\PP(n,d)$.

We will denote $\bA(2,2)_r$ the locally closed subscheme of $\bA(2,2)$ of forms of rank $r$, and $\bA(2,2)_{[a,b]}$ will stand for the subscheme of forms of rank $r$ with $a\geq r\geq b$. For instance, the scheme $\bA(2,2)_3$ parametrizes smooth forms.

Given a scheme $X$ over $\bA(2,2)_{[3,1]}$, we will denote the pullback of $X$ to $\bA(2,2)_r$ (resp. $\bA(2,2)_{[a,b]}$) as $X_r$ (resp. $X_{[a,b]}$). 
\subsection{Basic definitions and some properties}\label{subsec:GL3 counterpart}
Let $f:X\to X'$ be a $\PGLt$-equivariant morphism between two schemes of finite type over a base field $k_0$. We can form the quotient stacks $[X/\PGLt]$ and $[X'/\PGLt]$.

The morphism $f$ induces a representable morphism between the two quotient stacks. Observe that both $[X/\PGLt]$ and $[X'/\PGLt]$ are stacks over $\Bcal\PGLt$, and that both structure morphisms are representable.

We have:
\begin{prop}\cite{Dil}*{pr. 1.5}
	Let $\bA(2,2)_3$ denote the scheme parametrising smooth quadratic ternary forms, endowed with the $\GLt$-action defined by the formula:
	\[ A\cdot q(x,y,z):=\det(A)q(A^{-1}(x,y,z)) \]
	Then $[\bA(2,2)_3/\GLt]\simeq \Bcal\PGLt$.
\end{prop}
In other terms, there is a morphism $\bA(2,2)_3\to\Bcal\PGLt$ which is a $\GLt$-torsor. We can pull back along this torsor the map $[X/\PGLt]\to [X'/\PGLt]$: what we get is a $\GLt$-equivariant morphism $g:Y\to Y'$ between $\GLt$-schemes.
\begin{df}\label{def:GLt counterpart of a morphism}\cite{Dil}*{def. 1.1 and 1.2}
	Let $X$ and $X'$ be two schemes of finite type over ${\rm Spec}(k_0)$ endowed with a $\PGLt$-action, and let $f:X\to X'$ be a $\PGLt$-equivariant morphism. Then the $\GLt$-equivariant morphism $g:Y\to Y'$ obtained with the construction above is the $\GLt$\emph{-counterpart of} $f$.
	
	The $\GLt$-scheme $Y$ (resp. $Y'$) is the $\GLt$\emph{-counterpart of} $X$ (resp. $X'$).
\end{df}
The construction of $\GLt$-counterparts is functorial. Observe also that if $f$ is proper (resp. flat) then $g$ will also be proper (resp. flat).

There is another way to think of $\GLt$-counterparts (cf. \cite{Dil}*{rem. 1.6}). The inclusion $i:\PGLt\hookrightarrow\GLt$ induces a representable morphism of classifying stacks $\Bcal\PGLt\to\Bcal\GLt$: this morphism sends a $\PGLt$-torsor $P\to S$ to the associated $\GLt$-torsor $P\times^{\PGLt}\GLt\to S$, where $P\times^{\PGLt}\GLt:=(P\times \GLt)/\PGLt$ and the (right) action on the product is defined by the formula:
\[ (x,g)\cdot h:=(h^{-1}x,gi(h)) \]
In general, given an algebraic group $G$, the functor $X\mapsto [X/G]$ induces an equivalence between the category $G$-equivariant schemes over $k_0$ and the category of algebraic stacks over $\Bcal G$ with representable structure morphism: therefore, given a $\PGLt$-scheme $X$, we get a representable morphism $[X/\PGLt]\to\Bcal\PGLt\to\Bcal\GLt$, hence a $\GLt$-equivariant scheme over $k_0$.

The following proposition is immediate to prove:
\begin{prop} \label{prop: chow diagrams}
	Let $f:X\to X'$ be a $\PGLt$-equivariant proper morphism between two $\PGLt$-schemes, and let $g:Y\to Y'$ be its $\GLt$-counterpart. Then we have:
	\begin{enumerate}
		\item a commutative diagram of equivariant Chow groups of the form
		$$\xymatrix {
			CH^{\PGLt}_i(X) \ar[r]^{f_*} \ar[d] & CH^{\PGLt}_i(X') \ar[d] \\
			CH^{\GLt}_i(Y) \ar[r]^{g_*} & CH^{\GLt}_i(Y') }$$
		where the vertical arrows are isomorphisms.\\
		\item a commutative diagram of equivariant Chow groups with coefficients of the form
		$$\xymatrix {
			A^{\PGLt}_i(X) \ar[r]^{f_*} \ar[d] & A^{\PGLt}_i(X') \ar[d] \\
			A^{\GLt}_i(Y) \ar[r]^{g_*} & A^{\GLt}_i(Y') }$$
		where the vertical arrows are isomorphisms.
	\end{enumerate} 	
\end{prop}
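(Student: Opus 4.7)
The plan is to exploit the fact that equivariant Chow groups, with or without coefficients, depend only on the associated quotient stack. Since by Proposition \ref{prop: GLt-counterpart of a scheme} we have the identification $[X/\PGLt] \simeq [Y/\GLt]$, the vertical arrows should be induced by this common stack-presentation; the commutativity of the square will then follow from the fact that, by Proposition \ref{prop:GLt counterpart of a morphism}, $g$ represents the same morphism of stacks as $f$.

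To make the isomorphism explicit I would use a ``double-torsor'' construction. Form the fibre product
$$ Z := X \times_{[X/\PGLt]} Y. $$
By base change of the two defining torsors, $Z \to X$ is a $\GLt$-torsor compatible with the $\PGLt$-action on $X$, and $Z \to Y$ is a $\PGLt$-torsor compatible with the $\GLt$-action on $Y$; in particular $Z$ carries a natural $\PGLt \times \GLt$-action, and taking the quotient by either factor recovers $X$ or $Y$. Combining the Edidin--Graham-style approximation recalled at the end of Section \ref{sec:equi Rost Chow} with the vector bundle invariance $A^i(E,\Het) \simeq A^i(X,\Het)$ for a vector bundle $E \to X$, one obtains natural isomorphisms
$$ A^i_{\PGLt}(X, \Het) \;\simeq\; A^i_{\PGLt \times \GLt}(Z, \Het) \;\simeq\; A^i_{\GLt}(Y, \Het), $$
and analogously for the ordinary equivariant Chow groups; these are the vertical arrows of (1) and (2).

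For the commutativity of the squares, I would form $Z' := X' \times_{[X'/\PGLt]} Y'$ analogously and observe that the universal property of the fibre product produces a $\PGLt \times \GLt$-equivariant proper morphism $\tilde{f}: Z \to Z'$ whose $\PGLt$-quotient recovers $f$ and whose $\GLt$-quotient recovers $g$. By functoriality of proper pushforward applied along both chains of isomorphisms, $f_*$ and $g_*$ are identified with the same map $\tilde{f}_*$, which yields both commutative squares. The main, but standard, obstacle sits in the invariance statement $A^*_{H \times G}(E, \Het) \simeq A^*_H(B, \Het)$ for a $G$-torsor $E \to B$ with commuting $H$-action: one has to choose approximations of $BG$ and $BH$ simultaneously so that $(E \times U_G \times U_H)/(G \times H)$ is exhibited as a vector bundle over $(B \times U_H)/H$, and then apply the vector bundle formula. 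Once this is in place, all the remaining compatibilities are formal and use no special feature of $\GLt$ or $\PGLt$.
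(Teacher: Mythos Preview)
Your proposal is correct. The paper does not actually give a proof of this proposition: it introduces it with the phrase ``The following proposition is also immediate'', relying on the well-known fact (from \cite{EG} and its extension to coefficients) that equivariant Chow groups depend only on the quotient stack $[X/G]$ and not on the particular presentation. Your double-torsor construction via $Z = X \times_{[X/\PGLt]} Y$ is precisely the standard way to make this ``immediate'' claim explicit: it exhibits concretely how two presentations of the same stack yield canonically isomorphic equivariant groups, and how proper pushforwards are identified through the common lift $\tilde f$. So you are not taking a different route from the paper; you are simply writing out the details it suppresses, including the one genuine verification (the invariance $A^*_{H\times G}(E,\Het)\simeq A^*_H(B,\Het)$ for a $G$-torsor with commuting $H$-action) that makes the word ``immediate'' honest.
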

\subsection{Applications}
We now apply the machinery above to a particular case. Let $\PP(1,2n)$ be the projective space of binary forms of degree $2n$. This scheme has a natural action of $\PGLt$ given by $A\cdot f(x,y)=f(A^{-1}(x,y))$. We want to find its $\GLt$-counterpart.

Consider the scheme $\bA(2,2)_{[3,1]}=\bA(2,2)\setminus\{0\}$ of non-zero quadratic ternary forms. This is a $\Gm$-torsor over the projective space $\PP(2,2)$ of plane conics.

\begin{df}\label{df:universal conic}
	\hspace{1pt}
	\begin{enumerate}
		\item We denote $Q\subset\PP(2,2)\times\PP^2$ the universal conic over $\PP^2$.
		\item We denote $\widehat{Q}\subset \bA(2,2)_{[3,1]}\times\PP^2$ the pullback of the universal conic $Q\to\PP(2,2)$ along the $\Gm$-torsor $\bA(2,2)_{[3,1]}\to\PP(2,2)$.
	\end{enumerate}
\end{df}
Let $\pr_1$ and $\pr_2$ be respectively the projection on the first and on the second factor of $\PP(2,2)\times\PP^2$. We have a short exact sequence of $\OO_{\PP(2,2)\times \PP^2}$-modules:
\[0\longrightarrow \Ical_Q\otimes\pr_2^*\OO(n) \longrightarrow \pr_2^*\OO(n)\longrightarrow \pr_2^*\OO(n)|_Q \longrightarrow 0\]
where $\Ical_Q\simeq\pr_1^*\OO(-1)\otimes\pr_2^*\OO(-2)$ is the ideal sheaf of $Q$.

Pushing everything forward along $\pr_1$, and applying the projection formula for sheaves (see \cite{Har}*{ex. III.8.3}) we get:
\[ 0\to \pr_{1*}\pr_2^*\OO(n-2)\otimes\OO(-1)\to \pr_{1*}\pr_2^*\OO(n) \to \pr_{1*}(\pr_2^*\OO(n)|_Q) \to 0 \]
where exactness on the right is due to the vanishing of $R^1\pr_{1*}\pr_2^*\OO(n-2)$, which follows from the base change theorem in cohomology (see \cite{Har}*{th. III.12.11}) combined with the well known vanishing of $H^1(\PP^2,\OO(n-2))$.

Observe that for every point $p$ of $\PP(2,2)$, the rank of $H^0(Q_p,\OO(n)|_{Q_p})$ is constant: applying again the base change theorem in cohomology we deduce that $\pr_{1*}(\pr_2^*\OO(n)|_Q)$ is locally free.
\begin{df}\label{def:Vn}\cite{Dil}*{def. 2.1}
	\hspace{1pt}
	\begin{enumerate}
		\item We define $\overline{V}_n$ as the vector bundle associated to the locally free sheaf $\pr_{1*}(\pr_2^*\OO(n)|_Q)$. Its projectivization is denoted $\PP(\ol{V}_n)$.
		\item We define $V_n\to\bA(2,2)_{[3,1]}$ as the vector bundle obtained by pulling back $\overline{V}_n$ along the $\Gm$-torsor $\bA(2,2)_{[3,1]}\to\PP(2,2)$. Its projectivization is denoted $\PP(V_n)$.
	\end{enumerate}	
\end{df}
Equivalently, $V_n$ is the vector bundle associated to $\pr_{1*}(\pr_2^*\OO(n)|_{\widehat{Q}})$.

Another way to think of the vector bundle $V_n$ is as follows: consider the following injective morphism of (trivial) vector bundles over $\bA(2,2)_{[3,1]}$:
$$ \bA(2,2)_{[3,1]}\times \bA(2,n-2)\longrightarrow \bA(2,2)_{[3,1]}\times \bA(2,n),\quad (q,f)\longmapsto (q,qf) $$
Then the vector bundle $V_n$ is equal to the cokernel of the morphism above.
\begin{rmk}\label{rmk:points of Vn}
	The points of $\PP(\ol{V}_n)$ can be thought as pairs $(q,[f])$, where $q$ is the projective equivalence class of a non-zero quadratic ternary form (equivalently, a conic) and $[f]$ is the equivalence class of a non-zero ternary form of degree $n$, where $f\sim f'$ if and only if $g$ divides $f-f'$ or $f'$ is a non-zero scalar multiple of $f$. The same description holds for the points of $\PP(V_n)$, with the exception that in this case we really look at the ternary quadratic form $q$ and not at the associated conic.
\end{rmk}

\begin{prop}\label{prop:P(Vn) counterpart of P^2n}
	The $\GLt$-counterpart of $\PP(1,2n)$ is $\PP(V_n)_3$, endowed with the $\GLt$-action:
	$$A\cdot (q,[f]):=(\det(A)q(A^{-1}(x,y,z)),[f(A^{-1}(x,y,z))])$$
	where $q$ is a smooth ternary forms of degree $2$ and $f$ is a representative of the equivalence class $[f]$ of a ternary form of degree $n$.
\end{prop}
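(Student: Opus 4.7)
The plan is to apply Proposition \ref{prop: GLt-counterpart of a scheme}: it suffices to exhibit a $\GLt$-equivariant isomorphism, over $\Scal = \bA(2,2)\sm$, between $\PP(V_n)\sm$ and the fibered product
$$Y := [\PP(1,2n)/\PGLt] \times_{\Bcal\PGLt} \Scal,$$
where $\Scal \to \Bcal\PGLt$ is the $\GLt$-torsor recalled in Section \ref{subsec:GL3 counterpart} from \cite{Dil}. Geometrically, this torsor sends a smooth ternary quadratic form $q$ to the smooth conic $C_q = V(q) \subset \PP^2$, viewed as a genus $0$ curve which is a $\PGLt$-twist of $\PP^1$.

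First I would compute the fiber of $\PP(V_n)\sm$ over $q \in \Scal$ via the Koszul-type sequence $0 \to \OO_{\PP^2}(-2) \xrightarrow{\cdot q} \OO_{\PP^2} \to \OO_{C_q} \to 0$. Twisting by $\OO_{\PP^2}(n)$ and taking global sections, the vanishing $H^1(\PP^2, \OO(n-2)) = 0$ yields
$$0 \to H^0(\PP^2,\OO(n-2)) \xrightarrow{\cdot q} H^0(\PP^2,\OO(n)) \to H^0(C_q, \OO_{C_q}(n)) \to 0,$$
so that $V_n|_q = \bA(2,n)/q\bA(2,n-2)$ is canonically identified with $H^0(C_q, \OO_{C_q}(n))$. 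Since $\OO_{C_q}(1) = \OO_{\PP^2}(1)|_{C_q}$ has degree $2$ on $C_q \simeq \PP^1$, this space is a $\PGLt$-twist of $H^0(\PP^1, \OO(2n)) = \bA(1,2n)$, which matches the fiber of $Y$ over $q$ exactly.

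Next I would globalize this fiberwise comparison. A $T$-point of $\PP(V_n)\sm$ is a pair $(q, [f])$ with $q \in \Scal(T)$ smooth and $[f] \in \PP(V_n|_q)$; the Koszul identification sends it to a section up to scalar of $\OO_{C_q}(n)$, which together with the tautological trivialization $\mathrm{Isom}(\PP^1_T, C_q)$ provides a $T$-point of $[\PP(1,2n)/\PGLt]$ whose underlying $\PGLt$-torsor is precisely the image of $q$ in $\Bcal\PGLt$. This defines the morphism $\PP(V_n)\sm \to Y$ over $\Scal$, and the fiberwise analysis of the previous paragraph shows it is an isomorphism of schemes.

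The main obstacle is verifying $\GLt$-equivariance for the action stated in the proposition. The $\det(A)$-twist on $q$ is exactly the one installed in \cite{Dil} to make $[\Scal/\GLt] \simeq \Bcal\PGLt$, while the absence of any twist on $[f]$ is automatic, since scalars are killed upon projectivization. Concretely, one must check that the inclusion of vector bundles $\bA(2,n-2) \times \Qptd \hookrightarrow \bA(2,n) \times \Qptd$ defining $V_n$ becomes $\GLt$-equivariant once the base transforms by the $\det$-twisted action; this reduces to balancing a $\det(A)$-factor on the fiber $\bA(2,n-2)$ against the twist carried by $q$, after which the induced $\GLt$-action on the projective bundle $\PP(V_n)\sm$ matches the formula in the proposition. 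Combined with the observation that the $\GLt$-action on $Y$ is determined by pullback from $\Scal$, this finalizes the identification of $\PP(V_n)\sm$ with $Y$ and hence with the $\GLt$-counterpart of $\PP(1,2n)$.
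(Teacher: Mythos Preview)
Your argument is correct and complete in outline. The paper itself does not prove this proposition at all: its entire proof is a one-line citation to \cite[proposition 2.4]{Dil}, together with the notational remark that what is called $\PP(V_n)\sm$ here is denoted $\PP(V_n)$ there. So there is nothing in the present paper to compare your argument against.

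What you have written is essentially a self-contained reconstruction of the argument one expects to find in \cite{Dil}: use Proposition \ref{prop: GLt-counterpart of a scheme} to realize the $\GLt$-counterpart as the fibered product $Y=[\PP(1,2n)/\PGLt]\times_{\Bcal\PGLt}\Scal$, identify the fibre of $\PP(V_n)\sm$ over a smooth conic $q$ with $\PP\big(H^0(C_q,\OO_{C_q}(n))\big)$ via the restriction sequence for $\OO_{\PP^2}(n)$, and observe that this is precisely the twist of $\PP(1,2n)=\PP\big(H^0(\PP^1,\OO(2n))\big)$ by the torsor $\mathrm{Isom}(\PP^1,C_q)$ classifying $q$ in $\Bcal\PGLt$. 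The equivariance discussion at the end is brief but accurate: the $\det$-twist on $q$ is what makes $[\Scal/\GLt]\simeq\Bcal\PGLt$, and on $[f]$ any character ambiguity is absorbed by the projectivization. If anything, you could make the globalization step slightly more explicit by noting that the restriction exact sequence and the identification $\OO_{C_q}(n)\simeq$ (twist of $\OO_{\PP^1}(2n)$) hold functorially in families over any base $T$, not just pointwise, but this is routine.
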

\begin{proof}
	See \cite[proposition 2.4]{Dil} (what we call here $\PP(V_n)_3$ is denoted $\PP(V_n)$ there).
\end{proof}
\begin{rmk}\label{rmk:hilb}
	Another way to think of the points of $\PP(V_n)_3$ is as pairs $(q,E)$, where $E$ is an effective divisor of degree $2n$ of the smooth plane conic $C$ defined by the equation $q=0$. 
	
	Let $F$ and $G$ be the plane curves respectively defined by the equations $f=0$ and $g=0$, and suppose that they do not contain $C$ as an irreducible component.
	
	By the classical Noether's theorem $AF+BG$, the intersection of $F$ with $C$ is equal to the intersection of $G$ with $C$ if and only if the difference $f-g$ is divisible by $q$, or in other terms if and only if $f-g$ is in the image of $\bA(2,2)_3\times\bA(2,n-2)\to\bA(2,2)_3\times\bA(2,n)$.
	
	From this we deduce that the points of $\PP(V_n)_3$ are in bijection with the pairs $(q,E)$, where $E$ is an effective divisor of degree $2n$.
	
	This can be reformulated by saying that $\PP(V_n)_3$ is the relative Hilbert scheme of points ${\rm Hilb}^{2n}_{\widehat{Q}_3/\bA(2,2)_3}$, where $\widehat{Q}_3$ is the pullback of the universal conic $Q$ over $\PP(2,2)$.
	
	Similarly, the scheme $\PP(\ol{V}_n)_3$ is the relative Hilbert scheme ${\rm Hilb}^{2n}_{Q_3/\PP(2,2)_3}$
\end{rmk}
We now sketch a construction contained in \cite{Dil}*{pg. 10}.

Let $\rho:Q\to\PP(2,2)$ be the universal conic. From definition \ref{def:Vn}.(1) we have:
\[ \overline{\Vcal}_n=\rho_*(\pr_2^*\OO(n)|_{Q})\]
We can form the following cartesian diagram:
\[
\xymatrix{
	Q'\ar[r]^{\psi} \ar[d]^{\varphi} & Q \ar[d]^{\rho} \\
	\PP(\overline{V}_n) \ar[r]^{\pi} &\PP(2,2)
}
\]
We have the canonical injection:
\[ \OO_{\PP(\overline{V}_n)}(-1) \hookrightarrow \pi^*\overline{\Vcal}_n \]
This can be pulled back along $\varphi$, so that we obtain:
\[ \alpha:\varphi^*\OO_{\PP(\overline{V}_n)}(-1)\hookrightarrow \varphi^*\pi^*\overline{\Vcal}_n=\psi^*\rho^*\overline{\Vcal}_n  \]
Observe that we have a canonical surjection:
\[ \rho^*\overline{\Vcal}_n=\rho^*\rho_*(\pr_2^*\OO(n)|_{Q})\longrightarrow \pr_2^*\OO(n)|_{Q} \]
Pulling back the morphism above along $\psi$, composing it with $\alpha$ and tensoring it with $\pr_2^*\OO(-n)|_{Q'}$, we finally get:
\[ \varphi^*\OO_{\PP(\overline{V}_n)}(-1)\otimes\pr_2^*\OO(-n)|_{Q'}\longrightarrow \OO_{Q'} \]
The image of the morphism above is a sheaf of ideals $\Ical_{\overline{\Dcal}}$. Let $\overline{\Dcal}\subset Q'$ be the subscheme associated to $\Ical_{\overline{\Dcal}}$: then the fibre of $\overline{\Dcal}$ over a point $(\ol{q},[f])$ of $\PP(\overline{V}_n)$ is equal to the subscheme of $\PP^2$ associated to the homogeneous ideal $I=(q,f)$, which is well defined.

Observe that $\overline{\Dcal}\to\PP(\overline{V}_n)$ is generically \'{e}tale of degree $2n$.
\begin{rmk}
	If we think of $\PP(\ol{V}_n)_3$ as the relative Hilbert scheme of points ${\rm Hilb}^{2n}_{Q_3/\PP(2,2)_3}$ (see remark \ref{rmk:hilb}), then the restriction $\ol{\Dcal}_3$ of $\ol{\Dcal}$ over $\PP(\ol{V}_n)_3$ is exactly the universal subscheme associated to the Hilbert scheme.
\end{rmk}
\begin{df}\label{def:D}
	\hspace{1pt}
	\begin{enumerate}
		\item Let $\overline{\Dcal}^{\rm ram}$ be the locus defined by the $0^{\rm th}$ Fitting ideal of the sheaf of relative differentials $\Omega^1_{\overline{\Dcal}/\PP(\overline{V}_n)}$. Then we define $\overline{D}$ as the image of the projection of $\overline{\Dcal}^{\rm ram}$ onto $\PP(\overline{V}_n)$.
		\item We define \emph{the fundamental divisor} $D$ as the pullback of $\overline{D}\to\PP(2,2)$ along the morphism $\bA(2,2)_{[3,1]}\to\PP(2,2)$.
	\end{enumerate}
\end{df}
\begin{rmk}\label{rmk:points of D}
	By \cite[\href{https://stacks.math.columbia.edu/tag/0C3I}{Tag 0C3I}]{stacks-project}, the subscheme $\overline{\Dcal}^{\rm ram}$ is the ramification locus of $\overline{\Dcal}\to\PP(V_n)$.
	
	Moreover, due to the fact that taking the scheme theoretic image commutes with flat base change (\cite[\href{https://stacks.math.columbia.edu/tag/081I}{Tag 081I}]{stacks-project},) we deduce that $D$ is the projection onto $\bA(2,2)_{[3,1]}$ of the ramification locus of $\Dcal\to\bA(2,2)_{[3,1]}$.
	
	We can think of the points of $\ol{D}$ as those pairs $(q,[f])$, where $q$ is a ternary quadratic form defined up to scalar multiplication and $[f]$ is the equivalence class of a ternary form of degree $n$ (see remark \ref{rmk:points of Vn}), such that closed subscheme of $\PP^2$ determined by the homogeneous ideal $I=(q,f)$ is either singular or it contains a line.
\end{rmk}

Let $D_3$ be the restriction of $D$ to $\bA(2,2)_3$: then $D_3$ is a Cartier divisor whose points can be regarded as pairs $(q,[f])$ such that the subscheme in $\PP^2$ defined by the ideal $I=(q,f)$ is singular.

As in section \ref{sec:coh inv}, let $\Delta_{1,2n}$ be the divisor of singular forms in $\PP(1,2n)$. Then we have:
\begin{cor}\label{cor:chow diagrams for D}\cite{Dil}*{cor. 2.7.(1)}
	The $\GLt$-counterpart of $\Delta_{1,2n}$ is $D_3$. We also have commutative diagrams
	$$\xymatrix {
		CH^{\PGLt}_i(\Delta_{1,2n}) \ar[r]^{i_*} \ar[d] & CH^{\PGLt}_i(\PP(1,2n)) \ar[d] \\
		CH^{\GLt}_i(D_3) \ar[r]^{i_*} & CH^{\GLt}_i(\PP(V_n)_3) }$$
	$$\xymatrix {
		A^{\PGLt}_i(\Delta_{1,2n}) \ar[r]^{i_*} \ar[d] & A^{\PGLt}_i(\PP(1,2n)) \ar[d] \\
		A^{\GLt}_i(D_3) \ar[r]^{i_*} & A^{\GLt}_i(\PP(V_n)_3) }$$
	where the vertical arrows are all isomorphisms.
\end{cor}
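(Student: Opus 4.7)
The plan is to realize $D\sm$ as the fiber product prescribed by Proposition~\ref{prop: GLt-counterpart of a scheme}, and then invoke Proposition~\ref{prop: chow diagrams} for the two commutative diagrams. By Proposition~\ref{prop: GLt-counterpart of a scheme} applied to the closed $\PGLt$-invariant subscheme $\Delta_{1,2n}\subset\PP(1,2n)$, a $\GLt$-counterpart is the fiber product $Y:=[\Delta_{1,2n}/\PGLt]\times_{\cl{B}\PGLt}\cl{S}$. By the functoriality of the fiber product, the $\PGLt$-equivariant closed immersion $i:\Delta_{1,2n}\hookrightarrow\PP(1,2n)$ induces a $\GLt$-equivariant closed immersion $Y\hookrightarrow\PP(V_n)\sm$ compatible with the stack equivalence of Proposition~\ref{prop:P(Vn) counterpart of P^2n}. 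The task therefore reduces to identifying $Y$ with $D\sm$ as closed subschemes of $\PP(V_n)\sm$.

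For this identification I would use the modular interpretation recorded in the remark following Proposition~\ref{prop:P(Vn) counterpart of P^2n}: $\PP(V_n)\sm\simeq\mathrm{Hilb}^{2n}_{\Qcal\sm/\Scal}$, so its $T$-points are pairs $(q,E)$ with $q$ a smooth conic over $T$ and $E\subset V_+(q)$ an effective relative Cartier divisor of degree $2n$, namely $E=V_+(q,f)$ for any representative $f$ of $[f]$. Under the equivalence of Proposition~\ref{prop:P(Vn) counterpart of P^2n}, the pair $(q,E)$ is sent, after locally choosing an isomorphism $V_+(q)\cong\PP^1$, to the binary form of degree $2n$ with zero locus $E$. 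Now $\Delta_{1,2n}$ is the discriminant hypersurface in $\PP(1,2n)$, i.e. the locus of non-reduced divisors on $\PP^1$; and since $V_+(q)$ is smooth, non-reducedness of $E$ on $V_+(q)$ is equivalent to non-smoothness of $V_+(q,f)\subset\PP^2$. After trivializing $V_+(q)\cong\PP^1$ \'etale-locally over $\Scal$, the defining ideals of $Y$ and of $D\sm$ both coincide with the discriminant of the restricted binary form of degree $2n$. Hence $Y=D\sm$ as closed subschemes of $\PP(V_n)\sm$.

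With the $\GLt$-counterpart of $\Delta_{1,2n}$ identified as $D\sm$, the two commutative diagrams are a direct application of Proposition~\ref{prop: chow diagrams} to the proper $\PGLt$-equivariant morphism $i$; its $\GLt$-counterpart, by Proposition~\ref{prop:GLt counterpart of a morphism}, is exactly the closed immersion $D\sm\hookrightarrow\PP(V_n)\sm$, and the vertical arrows are isomorphisms as asserted there. The main technical obstacle is the second paragraph: showing the identification $Y=D\sm$ is scheme-theoretic, and not merely a bijection of $k_0$-points. The cleanest route is the \'etale-local argument just sketched, which reduces the comparison of the two scheme structures to the single well-known identity that both defining equations pull back to the discriminant of a binary form of degree $2n$ once the universal smooth conic $\Qcal\sm\to\Scal$ has been trivialized.
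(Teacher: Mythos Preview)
Your argument is correct and follows precisely the strategy that the paper's framework is designed to support: construct the $\GLt$-counterpart as the fiber product of Proposition~\ref{prop: GLt-counterpart of a scheme}, identify it with $D\sm$ via the modular description of $\PP(V_n)\sm$ as a relative Hilbert scheme (the remark after Proposition~\ref{prop:P(Vn) counterpart of P^2n}), and then read off the diagrams from Proposition~\ref{prop: chow diagrams}. The paper itself does not give a proof here but simply cites \cite{Dil}, where the same machinery is developed; your write-up is essentially what that reference does, and the \'etale-local reduction to the discriminant of a binary form is exactly the right way to handle the scheme-theoretic identification. One small remark: since $D$ is defined in Section~\ref{sec:on the divisor} with the \emph{reduced} structure, your final step implicitly uses that the discriminant of a binary form of degree $2n$ is an irreducible polynomial, so that the discriminant hypersurface is already reduced; this is standard but worth noting if you want the identification to be genuinely scheme-theoretic rather than only set-theoretic.
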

\section{The geometry of the fundamental divisor}\label{sec:on the divisor}
In this section we will study the geometry of $D_2$, the restriction of the fundamental divisor $D$ (see definition \ref{def:D}) to $\PP(V_n)_2$, and we will show that $D_2$ has two irreducible components (proposition \ref{prop:Dnod has two irreducible components}).

We will also consider the proper transform $D'_{[2,1]}$ of $D_{[2,1]}$ inside a certain variety $\wt{\PP}(V_n)_{[2,1]}$ equipped with a birational morphism to $\PP(V_n)_{[2,1]}$ (see definition \ref{df:D'1}), and we will show that the restriction of $D'_{[2,1]}$ to the exceptional locus of this morphism has two irreducible components (proposition \ref{pr:DtildeE has two irr comp}).
\subsection{Projectivization of quotient vector bundles}
In this subsection we prove a general result on quotient vector bundles that we will need in the remainder of the paper.
The following proposition is a generalization of the classical construction of the projection morphism from a projective subspace.  
\begin{prop}\label{pr:quotient bundle}
	Let $X$ be a scheme of finite type over $k_0$ and suppose we have an exact sequence of locally free sheaves over $X$ of the form:
	\[ 0\longrightarrow \Fcal \longrightarrow \Ecal \longrightarrow \Gcal \longrightarrow 0 \]
	We can form the projective bundle $\pi:\PP(\Gcal)\to X$. 
	
	Consider the flat morphism $f:\PP(\Ecal)\setminus \PP(\Fcal)\to X$ obtained by composing the open embedding of $\PP(\Ecal)\setminus\PP(\Fcal)$ into $\PP(\Ecal)$ with the projection of this projective bundle onto $X$. 
	
	Then there exists a canonical morphism $\beta:\PP(\Ecal)\setminus\PP(\Fcal)\to\PP(\Gcal)$. Moreover, if there is a section $\sigma:\Ecal\to\Fcal$, the scheme $\PP(\Ecal)\setminus\PP(\Fcal)$ is isomorphic to the vector bundle over $\PP(\Gcal)$ associated to the locally free sheaf $\pi^*\Fcal\otimes\OO_{\PP(\Gcal)}(1)$.
\end{prop}
\begin{proof}
	To define a morphism $\beta:\PP(\Ecal)\setminus\PP(\Fcal)\to\PP(\Gcal)$ over $X$ we have to find an invertible sheaf $\Lcal$ over $\PP(\Ecal)\setminus\PP(\Fcal)$ and an embedding of locally free sheaves $\Lcal\hookrightarrow f^*\Gcal$. By an embedding of locally free sheaves we mean an injective morphism of coherent sheaves whose induced morphism between the fibres over the points remains injective.
	
	Let $\rho:\PP(\Ecal)\to X$ be the projection morphism. We have a canonical inclusion $\OO_{\PP(\Ecal)}(-1)\hookrightarrow \rho^*\Ecal$. Restricting this morphism to the open subscheme $\PP(\Ecal)\setminus\PP(\Fcal)$ and composing it with the morphism $f^*\Ecal\to f^*\Gcal$, we get:
	\[ \OO_{\PP(\Ecal)}(-1)|_{\PP(\Ecal)\setminus\PP(\Fcal)} \longrightarrow f^*\Gcal \]
	This last morphism is injective because the image of $\OO_{\PP(\Ecal)}(-1)|_{\PP(\Ecal)\setminus\PP(\Fcal)}$ inside $f^*\Ecal$ intersects the image of $f^*\Fcal$ only in the zero section. 
	
	This last property also holds true when passing to the fibres over the points of $\PP(\Ecal)\setminus\PP(\Fcal)$, because both $f^*\Fcal\to f^*\Ecal$ and $\OO_{\PP(\Ecal)}(-1)|_{\PP(\Ecal)\setminus\PP(\Fcal)}\to f^*\Ecal$ are embeddings of locally free sheaves, from which we also deduce that $\OO_{\PP(\Ecal)}(-1)|_{\PP(\Ecal)\setminus\PP(\Fcal)}\to f^*\Gcal$ is an emedding of locally free sheaves.
	
	To prove the second claim of the proposition, it is enough to show that on the site of $\PP(\Gcal)$-schemes there is an isomorphism of sheaves:
	\[ {\rm Hom}(-,\PP(\Ecal)\setminus\PP(\Fcal)) \simeq \pi^*\Fcal\otimes\OO_{\PP(\Gcal)}(1) \]
	Take a $\PP(\Gcal)$-scheme $S$ and a morphism $\beta:S\to\PP(\Ecal)\setminus\PP(\Fcal)$ over $\PP(\Gcal)$: due to the fact that $\PP(\Gcal)$ is a scheme over $X$, the morphism $\beta$ can be regarded as a morphism of $X$-schemes,  and it induces an inclusion
	\[ \OO_{\PP(\Ecal)}(-1)|_S\hookrightarrow \Ecal|_S \]
	The fact that $\beta$ is a morphism of $\PP(\Gcal)$-schemes implies that:
	\begin{enumerate}
		\item $\OO_{\PP(\Ecal)}(-1)|_S\simeq \OO_{\PP(\Gcal)}(-1)|_S$.
		\item After identifying $\OO_{\PP(\Ecal)}(-1)|_S$ with $\OO_{\PP(\Gcal)}(-1)|_S$, the composition of $\beta$ with the morphism $\Ecal|_S\to\Gcal|_S$ is equal to the canonical inclusion $\OO_{\PP(\Gcal)}(-1)|_S\hookrightarrow \Gcal|_S$.
	\end{enumerate}
	Therefore, the sheaf ${\rm Hom}(-,\PP(\Ecal)\setminus\PP(\Fcal))$ on the site of $\PP(\Gcal)$-schemes is isomorphic to the sheaf
	\[ {\bf F}: S \longmapsto \left\{ \OO_{\PP(\Gcal)}(-1)|_S \hookrightarrow \Ecal|_S \text{ that commutes with the morphisms to } \Gcal|_S \right\} \]
	Using the section $\sigma:\Ecal\to\Fcal$ we can construct an isomorphism ${\bf F}\simeq \pi^*\Fcal\otimes\OO_{\PP(\Gcal)}(1)$ as follows: in one direction, we send an inclusion $i:\OO_{\PP(\Gcal)}(-1)|_S \hookrightarrow \Ecal|_S$ to $\sigma\circ i$, and then we twits by $\OO_{\PP(\Gcal)}(-1)|_S$. In this way we get a morphism
	\[ \OO_S \longrightarrow \Fcal|_S \otimes \OO_{\PP(\Gcal)}(1)|_S \]
	
	In the other direction, given a morphism $\OO_S \to \Fcal|_S \otimes \OO_{\PP(\Gcal)}(1)|_S$ we first twist it by $\OO_{\PP(\Gcal)}(-1)|_S$, obtaining in this way a morphism $\alpha:\OO_{\PP(\Gcal)}(-1)|_S\to \Fcal|_S$.
	
	After that, we take the unique lifting $i:\OO_{\PP(\Gcal)}(-1)|_S\to \Ecal|_S$ of the canonical inclusion $\OO_{\PP(\Gcal)}(-1)|_S\hookrightarrow \Gcal|_S$ having the property that $\sigma\circ i=\alpha$.
	
	This construction gives us a well defined isomorphism of sheaves ${\bf F}\simeq \pi^*\Fcal\otimes\OO_{\PP(\Gcal)}(1)$.
\end{proof}

\subsection{The geometry of $D_2$}
Recall definition \ref{def:Vn}, where we introduced the vector bundles $\ol{V}_n$ and $V_n$ defined respectively over $\PP(2,2)$ and over $\bA(2,2)_{[3,1]}$, the scheme of non-zero quadratic forms in three variables. The points of $\PP(\ol{V}_n)$ can be thought as pairs $(q,[f])$, where $q$ is the projective equivalence class of a non-zero quadratic ternary form and $[f]$ is the equivalence class of a non-zero ternary form of degree $n$ (see remark \ref{rmk:points of Vn}).

Let $\ol{D}_2$ denote the pullback of $\ol{D}\to\PP(2,2)$ to the subscheme of rank $2$ conics $\PP(2,2)_2$: then it follows from the last lines of remark \ref{rmk:points of D} that the points of $\ol{D}_2$ are pairs $(q,[f])$ where $q=l_1l_2$ is a quadratic form of rank $2$ (well defined up to multiplication by a non-zero scalar) and the subscheme defined by the homogeneous ideal $I=(l_1l_2,f)$ of $\PP^2$ is either singular or has an irreducible component of dimension $1$ (this happens when one of the linear factors of $q$ divides $f$).

\begin{df}\label{df:D1}
	\hspace{1pt}
	\begin{enumerate}
		\item We define the subset $\ol{D}_2^1\subset \PP(\ol{V}_n)_2$ as the set of points $(q,[f])$ such that $q=l_1l_2$, where $l_1$ and $l_2$ are two distinct linear forms, and the closed subscheme of $\PP^2$ defined by the ideal $I=(l_1,l_2,f)$ is non-empty.
		\item We define the subset $D_2^1$ as the preimage of $\ol{D}_2^1$ along the $\Gm$-torsor $\PP(V_n)_2\to \PP(\ol{V}_n)_2$.
	\end{enumerate}
\end{df}
Thanks to the next proposition, we can make the subsets $\ol{D}_2^1$ and $D_2^1$ into closed subschemes.
\begin{prop}\label{pr:D1 irr}
	There is a scheme structure on $\ol{D}_2^1$ (resp. on $D_2^1$) which turns it into an irreducible divisor inside $\PP(\ol{V}_n)_2$ (resp. $\PP(V_n)_2$).
	Moreover, we have that $\ol{D}_2^1$ (resp. $D_2^1$) is an irreducible component of $\ol{D}_2$ (resp. $D_2$).
\end{prop}

\begin{proof}
	It is enough to show that $\ol{D}_2^1$ is an irreducible divisor inside $\PP(\ol{V}_n)_2$.
	
	Let $\Ycal^1$ be the subscheme of $\PP(2,1)\times\PP(2,1)\times\PP(2,n)\times\PP^2$ defined as follows:
	\[ \Ycal^1=\left\{ (l_1,l_2,f,p) \text{ such that } l_1(p)=l_2(p)=f(p)=0 \right\}\]
	Observe that $\Ycal^1\to\PP^2$ is a projective bundle over $\PP^2$, hence it is irreducible, and its fibres have codimension $3$ in $\PP(2,1)\times\PP(2,1)\times\PP(2,n)$. 
	
	Call $Y^1$ the image of the projection morphism $\pr_{123}:\Ycal^1\to\PP(2,1)\times\PP(2,1)\times\PP(2,n)$, where by image we mean the smallest closed subscheme of $\PP(2,1)\times\PP(2,1)\times\PP(2,n)$ through which $\pr_{123}$ factors (see \cite[\href{https://stacks.math.columbia.edu/tag/01R7}{Tag 01R7}]{stacks-project}).
	
	The morphism $\Ycal^1\to Y^1$ is generically finite, hence $Y^1$ is closed, irreducible and it has codimension $1$. Moreover, due to the properness of $\pr_{123}$, the points of $Y^1$ correspond to triples $(l_1,l_2,f)$ such that the closed subscheme of $\PP^2$ defined by the ideal $I=(l_1,l_2,f)$ is not empty.
	
	Consider the projective morphism:
	\[\pi:\PP(2,1)\times\PP(2,1)\times\PP(2,n)\longrightarrow \PP(2,2)\times\PP(2,n)\]
	which sends a triple $(l_1,l_2,f)$ to $(l_1l_2,f)$. Observe that the image of $\pi$ is $\PP(2,2)_{[2,1]}\times\PP(2,n)$, and that $\pi$ is finite of degree $2$ over its image. This implies that $\pi(Y^1)$, regarded as a subscheme inside $\PP(2,2)_{[2,1]}\times\PP(2,n)$, is closed, irreducible, it has codimension $1$ and its points correspond to pairs $(l_1l_2,f)$ such that the closed subscheme of $\PP^2$ defined by the ideal $I=(l_1,l_2,f)$ is not empty.
	
	Recall that $\ol{V}_n$ is a quotient of the vector bundle associated to the locally free sheaf $\pr_{1*}\pr_2^*\OO(n)$ by the subsheaf $\pr_{1*}\pr_2^*\OO(n-2)\otimes\OO(-1)$ (see definition \ref{def:Vn}.(1)). Applying proposition \ref{pr:quotient bundle}, we obtain a morphism:
	\[(\PP(2,2)\times\PP(2,n))\setminus \im(i)\longrightarrow\PP(\ol{V}_n) \]
	where $i:\PP(2,2)\times\PP(2,n-2)\hookrightarrow\PP(2,2)\times\PP(2,n)$ is the inclusion of (trivial) projective bundles which sends a pair $(q,f)$ to $(q,qf)$.
	The quotient morphism sends a pair $(q,f)$ to $(q,[f])$, where $[f]$ denotes the equivalence class of $f$ in the quotient.
	
	This morphism restricts to the morphism:
	\[p:(\PP(2,2)_{[2,1]}\times\PP(2,n))\setminus \im(i)_{[2,1]} \longrightarrow \PP(\ol{V}_n)_{[2,1]} \]
	We can restrict $\pi(Y^1)$ to $(\PP(2,2)_{[2,1]}\times\PP(2,n))\setminus \im(i)_{[2,1]} $ and take its image via $p$, which we denote $Z^1$. By construction, we have that $Z^1$ is closed and irreducible. Moreover, by \cite[\href{https://stacks.math.columbia.edu/tag/01R8}{Tag 01R8}.(4)]{stacks-project}, it contains $\ol{D}^1_2$ as an open, dense subset.
	
	Observe that $p$ is a topological quotient: this implies that $\ol{D}^1_2$ is closed because its preimage, which is the restriction of $\pi(Y^1)$ to $(\PP(2,2)_{[2,1]}\times\PP(2,n))\setminus \im(i)_{[2,1]}$, is closed. We deduce that $\ol{D}^1_2=Z^1$, hence that $\ol{D}^1_2$ is irreducible because $Z^1$ is so.
	
	Again by proposition \ref{pr:quotient bundle}, the morphism $p$ makes $(\PP(2,2)_{[2,1]}\times\PP(2,n))\setminus \im(i)_{[2,1]}$ into a vector bundle over $\PP(\ol{V}_n)_{[2,1]}$, whose restriction over $\ol{D}_2^1$ is $\pi(Y^1)$, which has codimension $1$ in $(\PP(2,2)_{[2,1]}\times\PP(2,n))\setminus \im(i)_{[2,1]}$: from this we deduce that $\ol{D}_2^1$ has codimension $1$.
	
	Finally, we have to show that $\ol{D}_2^1\subset \ol{D}_2$: this follows from the fact that if $(l_1l_2,[f])$ is a point of $\ol{D}_2^1$, then the subscheme of $\PP^2$ defined by the ideal $I=(l_1l_2,f)$ either contains a line or it has a singular point $p$, which is the unique point such that $l_1(p)=l_2(p)=0$.
\end{proof}

\begin{df}\label{df:D2}
	\hspace{1pt}
	\begin{enumerate}
		\item We define the subset $\ol{D}_2^2\subset \PP(\ol{V}_n)_2$ as the set of points $(q,[f])$ such that $q=l_1l_2$, where $l_1$ and $l_2$ are two distinct linear forms, and there is an $i$ such that the two plane curves defined respectively by the equations $l_i=0$ and $f=0$ do not intersect transversally. This is equivalent to the condition that the subscheme of $\PP^2$ defined by the homogeneous ideal $I=(l_i,f)$ is either singular or a line.
		\item We define the subscheme $D_2^2$ as the preimage of $\ol{D}_2^2$ along the $\Gm$-torsor $\PP(V_n)_2\to \PP(\ol{V}_n)_2$.
	\end{enumerate}
\end{df}

\begin{prop}\label{pr:D2 irr}
	There is a scheme structure on $\ol{D}_2^2$ (resp. on $D_2^2$) which turns it into an irreducible divisor inside $\PP(\ol{V}_n)_2$ (resp. $\PP(V_n)_2$).
	Moreover, we have that $\ol{D}_2^2$ (resp. $D_2^2$) is an irreducible component of $\ol{D}_2$ (resp. $D_2$).
\end{prop}	

\begin{proof}
	It is enough to prove the proposition for $\ol{D}_2^2$.
	
	We will argue as in the proof of proposition \ref{pr:D1 irr}. Let $\Ycal^2$ be the subscheme of $\PP(2,1)\times\PP(2,n)\times\PP^2$ defined as follows:
	\[ \Ycal^2:=\left\{ (l,f,p) \text{ such that } l(p)=f(p)=\det _{i} J(l,f)(p)=0 \text{ for }i=1,2,3  \right\} \]
	where $\det_i J(l,f)$ denotes the $i^{\rm th}$-minor of the Jacobian matrix associated to the forms $l$ and $f$.
	
	Zariski-locally, to verify if a triple $(l,f,p)$ is in $\Ycal^2$, it is enough to check the vanishing of only one of the three minors of $J(l,f)$: for instance, if $U_z$ denotes the open subscheme of $\PP^2$ where the coordinate $z\neq 0$, then $\Ycal^2\cap \PP(2,1)\times\PP(2,n)\times U_z$ is the locus where:
	\[ l(p)=f(p)=l_x(p)f_y(p)-l_y(p)f_x(p)=0 \]
	Let $H\subset\PP(2,1)\times\PP^2$ be the universal line: then $\Ycal^2$ can be regarded as a codimension $2$ projective subbundle of $\PP(2,n)\times H$ over $H$, hence $\Ycal^2$ is irreducible.
	
	Denote $Y^2$ the image of the projection of $\Ycal^2$ on $\PP(2,1)\times\PP(2,n)$. Due to the fact that $\Ycal^2\to Y^2$ is generically finite, we obtain that $Y^2$ is irreducible, it has codimension $1$ and its points are pairs of forms $(l,f)$ such that the associated plane curves do not intersect transversally.
	
	Consider the projective morphism:
	\[\pi:\PP(2,1)\times\PP(2,1)\times\PP(2,n)\longrightarrow \PP(2,2)\times\PP(2,n)\]
	which sends a triple $(l_1,l_2,f)$ to $(l_1l_2,f)$. Then the scheme theoretic image $\pi(\PP(2,1)\times Y^2)$ will be closed, irreducible, of codimension $1$ and its points will be pairs $(l_1l_2,f)$ such that there exists an $i\in\{1,2\}$ such that the plane curves defined respectively by the equations $l_i=0$ and $f=0$ do not intersect transversally.
	
	From here the proof works exactly as the proof of proposition \ref{pr:D1 irr}: one has only to substitute $\pi(Y^1)$ with $\pi(\PP(2,1)\times Y^2)$ and $\ol{D}_2^1$ with $\ol{D}_2^2$.	
\end{proof}	
We are ready to prove the first main result of this section:
\begin{prop}\label{prop:Dnod has two irreducible components}
	The closed subscheme $\ol{D}_2$ (resp. $D_2$) of $\PP(\ol{V}_n)_2$ (resp. $\PP(V_n)_2$) has two irreducible components, both $\GLt$-invariants and of codimension $1$, which are $\ol{D}_2^1$ and $\ol{D}_2^2$ (resp. $D_2^1$ and $D_2^2$).
\end{prop}
\begin{proof}
	We prove the proposition only for $\ol{D}_2$, as the other case easily follows from this one.
	
	We know from propositions \ref{pr:D1 irr} and \ref{pr:D2 irr} that $\ol{D}_2^1$ and $\ol{D}_2^2$ are both irreducible, of codimension $1$ and are contained in $\ol{D}_2$. The $\GLt$-invariance of both divisors is easy to check.
	
	We have to show that $\ol{D}_2^1$ and $\ol{D}_2^2$ are the only irreducible components of $\ol{D}_2$. The points of $\ol{D}_2$ are pairs $(l_1l_2,[f])$  such that $l_1\neq l_2$ and the closed subscheme $Z$ of $\PP^2$ associated to the ideal $I=(l_1l_2,f)$ is either singular or it contains a line (see remark \ref{rmk:points of D}). If $Z$ contains a line, than the point $(q,[f])$ is in $\ol{D}_2^1\cap\ol{D}_2^2$.
	
	Otherwise, the subscheme $Z$ must be supported on a set of points, and it must be singular: we can either have that the support of every singular point lays on only one of the two lines defined respectively by the equations $l_1=0$ and $l_2=0$, or that there exists a singular point whose support lays on the intersection of the two lines.
	
	In the first case, there must exist a line, say the one defined by the equation $l_1=0$, such that its intersection with the plane curve $F$ defined by the equation $f=0$ is somewhere non-transversal, thus the point $(l_1l_2,[f])$ is in $\ol{D}_2^2$.
	
	In the second case, the only possibility left is that the plane curve $F$ contains the intersection point of the two lines: this implies that $(l_1l_2,[f])$ is in $\ol{D}_2^1$.
\end{proof}

Finally, we have the following simple result on $D_1$:
\begin{prop}\label{prop:Dsq=P(Vn)sq}
	We have $\ol{D}_1=\PP(\ol{V}_n)_1$ and $D_1=\PP(V_n)_1$.
\end{prop}
\begin{proof}
	All the points in $\PP(\ol{V}_n)_1$ are of the form $(l^2,f)$, hence their associated subschemes of $\PP^2$ are all singular, and by construction the points in $\ol{D}_1$ are those pairs $(l^2,f)$ such that the associated subschemes are either singular or contain a line (see remark \ref{rmk:points of D}).
	
	The same argument also works for $D_1$.
\end{proof}
\subsection{The geometry of $D'_1$}
Recall (definition \ref{df:universal conic}) that $Q\to\PP(2,2)$ is the universal conic and $\wh{Q}\to\bA(2,2)_{[3,1]}$ is the pullback of $Q$ along the $\Gm$-torsor $\bA(2,2)_{[3,1]}\to \PP(2,2)$.
\begin{df}\label{df:Qsng}
	We define $Q\sng_{[2,1]}$ (resp. $\wh{Q}\sng_{[2,1]}$) as the closed subscheme of $Q_{[2,1]}$ (resp. $\wh{Q}_{[2,1]}$, see definition \ref{df:universal conic}) whose associated sheaf of ideals is the $1^{\rm th}$-Fitting ideal of the sheaf of relative differentials of $$\Omega^1_{Q_{[2,1]}/\PP(2,2)_{[2,1]}} \text{ (resp. }\Omega^1_{\wh{Q}_{[2,1]}/\bA(2,2)_{[2,1]}}\text{)}$$	
\end{df}
Observe that both $Q\sng_{[2,1]}$ and $\wh{Q}\sng_{[2,1]}$ inherit a $\GLt$-action.
\begin{rmk}\label{rmk:Qsng}
	\hspace{1pt}
	\begin{enumerate}
		\item We have:
		\[  Q\sng_{[2,1]}=\left\{ (q,p) \text{ such that } q_x(p)=q_y(p)=q_z(p)=0 \right\} \subset \PP(2,2)\times\PP^2 \]
		If the characteristic of the base field is $\neq 2$, we deduce that $Q\sng_{[2,1]}\to\PP^2$ is a projective bundle: actually, it is a projective subbundle of $\PP(2,2)\times\PP^2$.
		\item By \cite[\href{https://stacks.math.columbia.edu/tag/0C3I}{Tag 0C3I}]{stacks-project} we have that $\wh{Q}\sng_{[2,1]}$ is the pullback of $Q\sng_{[2,1]}$ along the $\Gm$-torsor $\bA(2,2)_{[2,1]}\to\PP(2,2)_{[2,1]}$.
		
		We also know from \cite[\href{https://stacks.math.columbia.edu/tag/0C3K}{Tag 0C3K}]{stacks-project} that $Q\sng_{[2,1]}$ (resp. $\wh{Q}\sng_{[2,1]}$) is the subscheme of $Q_{[2,1]}$ (resp. $\wh{Q}\sng_{[2,1]}$) where the projection onto $\PP(2,2)_{[2,1]}$ (resp $\bA(2,2)_{[2,1]}$) is not smooth.
	\end{enumerate}	
\end{rmk}
\begin{prop}\label{prop:Qsng bir}
	Suppose that the characteristic of the base field is $\neq 2$. Then the morphism $Q\sng_{[2,1]}\to\PP(2,2)_{[2,1]}$ is an isomorphism over $\PP(2,2)_2$. Moreover, the restriction $Q\sng_1\to\PP(2,2)_1$ is a projective bundle.
	Similarly, we have that $\wh{Q}\sng_{[2,1]}\to\bA(2,2)_{[2,1]}$ is an isomorphism over $\bA(2,2)_2$ and the restriction $\wh{Q}\sng_1\to\bA(2,2)_1$ is a projective bundle.
\end{prop}
\begin{proof}
	By direct inspection of the defining equations, we see that the singular locus of $\PP(2,2)_{[2,1]}$ is $\PP(2,2)_1$, hence $\PP(2,2)_2$ is smooth.	Observe that $Q\sng\to \PP^2$ is a projective bundle, hence $Q\sng_2$ is integral. 
	
	The morphism $Q\sng_2\to\PP(2,2)_2$ is quasi-finite (the fibre is given by the unique singular point of a pair of skew lines) and proper (this follows from the properness of $Q\to \PP(2,2)$ and the fact that the singular locus is closed and commutes with base changes when the morphism is locally finite), hence finite. It is also birational, as we can easily define a section of $Q\sng_{2,\xi}\to \xi$, where $\xi$ is the generic point of $\PP(2,2)_2$.
	
	As $\PP(2,2)_2$ is smooth, hence normal, we deduce that $Q\sng_2\to\PP(2,2)_2$ is an isomorphism.
	
	The restriction $Q\sng_1$ can be described by the following equations:
	\[ Q\sng_1=\left\{(l^2,p)\text{ such that }2l(p)l_x(p)=2l(p)l_y(p)=2l(p)l_z(p)=0\right\} \subset \PP(2,2)_1\times \PP^2\]
	At least one of the partial derivatives of $l$, evaluated in $p$, is invertible, hence the subscheme can be equivalently describes as:
	\[ Q\sng_1=\left\{ (l^2,p)\text{ such that }l(p)=0 \right\} \]
	This proves that $Q\sng_1\to\PP(2,2)_1$ is a projective subbundle of $\PP(2,2)_1\times\PP^2$.
	
	The assertions on $\wh{Q}_{[2,1]}\sng$ and $\wh{Q}_1\sng$ easily follow from what we have just proved, because we have the cartesian diagrams:
	\[ \xymatrix{
		\wh{Q}\sng_{[2,1]} \ar[d] \ar[r] & Q\sng_{[2,1]} \ar[d] \\
		\bA(2,2)_{[2,1]} \ar[r] & \PP(2,2)_{[2,1]}}
	\xymatrix{
		\wh{Q}\sng_1 \ar[d] \ar[r] & Q\sng_1 \ar[d] \\
		\bA(2,2)_1 \ar[r] & \PP(2,2)_1 }\]
	This concludes the proof.
\end{proof}
We are ready to give the main definitions of this subsection.
\begin{df}\label{df:Ptilde}
	\hspace{1pt}
	\begin{enumerate}
		\item We define	$\wt{\PP}(\ol{V}_n)_{[2,1]}$ as the pullback of $\PP(\ol{V}_n)_{[2,1]}$ along the morphism $Q\sng_{[2,1]}\to \PP(2,2)_{[2,1]}$. Similarly, we define $\wt{\PP}(V_n)_{[2,1]}$ as the pullback of $\PP(V_n)_{[2,1]}$ along the morphism $\wh{Q}\sng_{[2,1]}\to\bA(2,2)_{[2,1]}$.
		
		\item We define $\wt{\PP}(\ol{V}_n)_1$ as the restriction of $\wt{\PP}(\ol{V}_n)_{[2,1]}$ over $\PP(2,2)_1$. Similarly we define $\wt{\PP}(V_n)_1$ as the restriction of $\wt{\PP}(V_n)_{[2,1]}$ to $\bA(2,2)_1$.
	\end{enumerate}
\end{df}
\begin{df}\label{df:D'1}
	\hspace{1pt}
	\begin{enumerate}
		\item We define $\ol{D}'_{[2,1]}$ (resp. $D'_{[2,1]}$) as the closure of the preimage of $\ol{D}_2$ (resp. $D_2$) inside $\wt{\PP}(\ol{V}_n)_{[2,1]}$ (resp. $\wt{\PP}(V_n)_{[2,1]}$).
		\item We define $\ol{D}'_1$ (resp. $D'_1$) as the pullback of $\ol{D}'_{[2,1]}$ (resp. $D'_{[2,1]}$) to $\wt{\PP}(\ol{V}_n)_1$ (resp. $\wt{\PP}(V_n)_1$).		
	\end{enumerate}
\end{df}

All the objects defined above inherit a $\GLt$-action or are $\GLt$-equivariant subschemes.
\begin{lm}\label{lm:Ptilde2=P2}
	The restriction $\wt{\PP}(\ol{V}_n)_2$ of $\wt{\PP}(\ol{V}_n)_{[2,1]}$ over $Q\sng_2$ is isomorphic to $\PP(\ol{V}_n)_2$.
\end{lm}
\begin{proof}
	Follows directly from proposition \ref{prop:Qsng bir}.
\end{proof}

Let $\ol{D}_2^1$ and $\ol{D}_2^2$ (resp. $D_2^1$ and $D_2^2$) be as in the definitions \ref{df:D1} and $\ref{df:D2}$. Briefly, $\ol{D}_2^1$ is the subscheme of $\PP(\ol{V}_n)_2$ whose points are pairs $(q,[f])$ with $q=l_1l_2$ a quadratic ternary form of rank $2$ and $f$ a ternary form of degree $n$ such that there exists a point $p$ in $\PP^2$ with $l_1(p)=l_2(p)=f(p)=0$. 

On the other hand, the points of $\ol{D}_2^2$ are the pairs $(q,[f])$ with $q=l_1l_2$ and such that the plane curves $l_i=0$ and $f=0$ do not intersect transversely, for some $i\in\{1,2\}$.

\begin{df}\label{df:D'1i}
	\hspace{1pt}
	\begin{enumerate}
		\item We define $\ol{D'}^i_{[2,1]}$ as the closure inside $\wt{\PP}(\ol{V}_n)_{[2,1]}$ of the preimage of $\ol{D}_2^i$ for $i=1,2$. We also define $\ol{D'}_1^i$ as the restriction of $\ol{D'}^i_{[2,1]}$ to $\wt{\PP}(\ol{V}_n)_1$.
		\item We define $D'^i_{[2,1]}$ as the closure inside $\wt{\PP}(V_n)_{[2,1]}$ of the preimage of $D_2^i$ for $i=1,2$. We also define $D'^i_1$ as the restriction of $D'^i_{[2,1]}$ to $\wt{\PP}(V_n)_1$.
	\end{enumerate}
\end{df}

\begin{prop}\label{pr:DtildeE has two irr comp}
	We have that $\ol{D'}_1$ (resp. $D'_1$) is a codimension $1$, $\GLt$-invariant closed subscheme of $\wt{\PP}(\ol{V}_n)_1$ (resp. $\wt{\PP}(V_n)_1$) with two $\GLt$-invariant irreducible components of codimension $1$, which are $\ol{D'}_1^1$ and $\ol{D'}_1^2$ (resp. $D'^1_1$ and $D'^2_1$).
\end{prop}
\begin{proof}
	It is enough to prove the proposition for $\ol{D'}_1$.
	By proposition \ref{prop:Dnod has two irreducible components} we know that $\ol{D}^1_2$ and $\ol{D}^2_2$ are the only two irreducible components of $\ol{D}_2$: due to the fact that $\wt{\PP}(\ol{V}_n)_{[2,1]}\to\PP(\ol{V}_n)_{[2,1]}$ is an isomorphism over $\PP(\ol{V}_n)_2$, we deduce that $\ol{D'}^i_{[2,1]}$, where $i=1,2$ (see definition \ref{df:D'1i}.(1)), are the only two irreducible components of $\ol{D'}_{[2,1]}$.
	
	This implies that $\ol{D'}_1$ is the union of the two closed subschemes $\ol{D'}^1_1$ and $\ol{D'}^2_1$. We have to show that these two closed subschemes are distinct, irreducible and have codimension $1$: these properties will be easy to verify once we know what are the points of $\ol{D'}_1^1$ and $\ol{D'}_1^2$.
	
	Recall from definition \ref{df:D1} that the points of $\ol{D}_2^1$ are pairs $(l_1l_2,[f])$ with $l_1$ and $l_2$ linearly-independent linear forms and such that there exists a point $p$ in $\PP^2$ with $l_1(p)=l_2(p)=f(p)=0$. Therefore, the points of the preimage $\ol{D'}_2^1$ of $\ol{D}_2^1$ inside $\wt{\PP}(\ol{V}_n)_2$ (see lemma \ref{lm:Ptilde2=P2}) are equal to triples $(l_1l_2,[f],p)$ with $l_1(p)=l_2(p)=f(p)=0$.
	
	The points of $\ol{D'}_1^1$ are the ones in $\wt{\PP}(\ol{V}_n)_1$ that belong to the closure of $\ol{D'}_2^1$: given a diagram of the following form
	\[\xymatrix{
		\Spec(K) \ar[r] \ar[d] & \ol{D'}^1_2 \ar[d]\\
		\Spec(R) \ar[r] & \wt{\PP}(\ol{V}_n)_{[2,1]}  \\
		\Spec(k_0) \ar[r] \ar[u] & \wt{\PP}(\ol{V}_n)_1 \ar[u]}\]
	with $R$ a DVR over $k_0$ and $K$ its fraction field, it is then enough to characterize the points given by the bottom arrow.
	
	This boils down to the following question: suppose we have a family of conics $Q_R\to \Spec(R)$ that generically has rank $2$ and on the central fibre has rank $1$, and suppose we have a pseudo-divisor (i.e., the vanishing locus of $[f]$) whose restriction to the generic fibre is a divisor of degree $n$ that contains $Q_K\sng$. How can we characterize the restriction of the pseudo-divisor to the central fibre? 
	
	Consider the section $p:\Spec(R)\to Q_R$ given by the closure of $Q_K\sng$ (the existence of such a section follows easily from the properness of $Q_R\to\Spec(R)$). Then the restriction of the pseudo-divisor to the central fiber will be the equivalence class of a global section of $\OO(n)|_{Q_{k_0}}$ that vanishes on $p_{k_0}:\Spec(k_0)\to Q_{k_0}$.
	
	In other terms, let $R$ be a DVR over $k_0$ with fraction field $K$, and take a morphism $\Spec(R)\to\ol{D'}_{[2,1]}^1$ such that the induced map from $\Spec(K)$ factors through $\ol{D'}^1_2$ and the closed point lands in $\wt{\PP}(\ol{V}_n)_1$: then the image of this point will be of the form $(l^2,[f],p)$ with $l(p)=f(p)=0$, and every such point can be obtained in this way. This gives us a description of the points of $\ol{D'}_1^1$.
	
	To prove irreducibility of $\ol{D'}^1_1$, we can argue as in the proof of proposition \ref{pr:D1 irr}: consider the closed subscheme 
	\[ \Ycal^1_1:=\left\{(l,f,p)\text{ such that }l(p)=f(p)=0\right\}\]
	contained in $\PP(2,1)\times\PP(2,n)\times\PP^2$. This is a projective bundle over $\PP^2$, hence irreducible. Embed $\Ycal^1_1$ inside $\PP(2,2)\times\PP(2,n)\times\PP^2$ via the morphism that sends $(l,f,p)$ to $(l^2,f,p)$, and take its image along the morphism
	\[ \PP(2,2)\times\PP(2,n)\times\PP^2\setminus (\im(i)\times\PP^2)\longrightarrow \PP(\ol{V}_n)\times\PP^2 \]
	given by proposition \ref{pr:quotient bundle}.
	The scheme-theoretic image is then irreducible and by construction it coincides with $\ol{D'}_1^1$.
	
	On the other hand, the points in $\ol{D'}_2^2$ will be triples $(l_1l_2,[f],p)$ with $l_1(p)=l_2(p)=0$ and such that the plane curve $F$ defined by the equation $f=0$ in $\PP^2$ intersects non-transversely one of the two lines (see definition \ref{df:D2}).
	
	Consider a morphism $\Spec(R)\to \ol{D'}_{[2,1]}^2$ whose generic point lands in $\ol{D'}^2_2$, and whose closed point is sent to a point in $\wt{\PP}(\ol{V}_n)_1$: then it is not hard to see that the image of the closed point will be of the form $(l^2,[f],p)$ with $l(p)=0$ and the plane curve $F$ intersecting non-transversely the line $l=0$. This gives us a description of the points of $\ol{D'}_1^2$.
	
	To prove the irreducibility of $\ol{D'}_1^2$, consider the closed subscheme of $\PP(2,1)\times\PP(2,n)\times\PP^2\times \PP^2$:
	\[\Ycal^2_1:=\left\{(l,f,p,q)\text{ such that }l(p)=l(q)=f(q)=\det_i J(l,f)(q)=0\text{ for }i=1,2,3 \right\}\]
	where $\det_i J(l,f)$ denotes the $i^{\rm th}$-minor of the Jacobian matrix associated to the forms $l$ and $f$.
	
	Let $\Ycal^2$ be the scheme introduced in proposition \ref{pr:D2 irr}, whose points are triples $(l,f,q)$ such that $l(q)=f(q)=\det J_i(l,f)(q)=0$: we proved that $\Ycal^2$ is irreducible. By contruction there is a morphism $\Ycal^2_2\to\Ycal^2$ which makes $\Ycal^2_2$ into a projective bundle over $\Ycal^2$: this implies that $\Ycal_2^2$ is irreducible.
	
	Let $Y^2_2$ be the image of $\Ycal_2^2$ along the projection $\pr_{123}:\PP(2,1)\times\PP(2,n)\times\PP^2\times\PP^2\to\PP(2,1)\times\PP(2,n)\times\PP^2$. Embed $Y^2_2$ into $\PP(2,2)\times\PP(2,n)\times\PP^2$ via the morphism that sends $(l,f,p)$ to $(l^2,f,p)$, and take its image along the morphism
	\[ \PP(2,2)\times\PP(2,n)\times\PP^2\setminus (\im(i)\times\PP^2)\longrightarrow \PP(\ol{V}_n)\times\PP^2 \]
	given by proposition \ref{pr:quotient bundle}.
	This scheme-theoretic image will then be irreducible and by construction it coincides with $\ol{D'}_1^2$.
	
	Putting all together, we have proved that $\ol{D'}_1^1\neq\ol{D'}_1^2$, and both of them are irreducible.
	
	Finally, propositions \ref{pr:D1 irr} and \ref{pr:D2 irr} tell us that both $\ol{D}_2^1$ and $\ol{D}_2^2$ have codimension $1$ in $\PP(\ol{V}_n)_2$, thus $\ol{D'}_{[2,1]}^1$ and $\ol{D'}^2_{[2,1]}$ have codimension $1$ in $\wt{\PP}(\ol{V}_n)_{[2,1]}$ by lemma \ref{lm:Ptilde2=P2}. When restricting a subvariety to another, the codimension can only decrease: therefore, both $\ol{D'}_1^1$ and $\ol{D'}_1^2$ must have codimension $1$ in $\wt{\PP}(\ol{V}_n)_1$, as they do not coincide with the whole $\wt{\PP}(\ol{V}_n)_1$.		
\end{proof}
\begin{rmk}\label{rmk:points of D'i}
	From the proof of proposition \ref{pr:DtildeE has two irr comp} we deduce an explicit description of the points of $D'^1_1$ and $D'^2_1$. More precisely, a triple $(l^2,[f],p)$ is in $D'^1_1$ if and only if $l(p)=f(p)=0$, that is if the plane curves defined by the equations $l=0$ and $f=0$ intersect in $p$ (this condition does not depend on the representative of the equivalence class $[f]$).
	
	On the other hand, the points of $D'^2_1$ are the triples such that $l(p)=0$ and the plane curves $l=0$ and $f=0$ do not intersect transversally.	
\end{rmk}
\section{Some equivariant intersection theory}\label{sec:Chow comp}
The main goal of this section is to compute the cycle classes of some schemes that we introduced in section \ref{sec:on the divisor}, namely $D^i_2$ (see definitions \ref{df:D1} and \ref{df:D2}) and $D'^i_1$ (see definitions \ref{df:D'1i}) for $i=1,2$. These results will be used to give a proof of the key lemma \ref{lm:key} in section \ref{sec:main lemma}.
\subsection{Equivariant Chow groups}
We give here a brief introduction to equivariant Chow groups. These were first introduced in the landmark papers \cite{Tot} and \cite{EG}. A nice account of this theory can be found in \cite{FV}*{sections 2-4}.

Let $X$ be a scheme of finite type over the ground field $k_0$, and let $G$ be an algebraic group acting on it. By \cite{EG}*{lemma 9}, for any $i>0$ we can always find a $G$-representation $V$ with an open subscheme $U\subset V$ such that $G$ acts freely on $U$ and the complement $V\setminus U$ has codimension $>i$.

Suppose that $X$ is equidimensional and quasi-projective, and that the action of $G$ is linearizable: by \cite{EG}*{prop. 23} there exists a $G$-torsor $X\times U\to (X\times U)/G$ in the category of schemes. We can define the $G$-equivariant Chow group of $X$ of codimension $i$ as:
\[ CH^i_G(X):=CHi((X\times U)/G)\]
This definition does not depend on the chosen representation (\cite{EG}*{prop. 1}).

Equivariant Chow groups enjoy most of the properties of standard Chow groups:
\begin{prop}\label{pr:properties chow}
	Let $X$ and $Y$ be equidimensional, quasi-projective schemes of finite type over $k_0$, endowed with a linearized $G$-action. Then we have:
	\begin{enumerate}
		\item \emph{Proper pushforward}: every $G$-equivariant, proper morphism $f:X\to Y$ induces a homomorphism of groups
		\[f_*:CH_i^G(X)\longrightarrow CH_i^G(Y)\]
		such that $f_*[V]=[k(V):k(f(V))][f(V)]$ for every subvariety $V$ ($[k(V):k(f(V))]=0$ if $\dim(V)>\dim(f(V))$).
		
		\item \emph{Flat pullback}:  every $G$-equivariant, flat morphism $f:X\to Y$ of relative constant dimension induces a homomorphism of groups
		\[f^*:CH^i_G(Y)\longrightarrow CH^i_G(X)\]
		such that $f^*[V]=[f^{-1}V]$, where the term on the right is the fundamental class of an equidimensional closed subscheme.
		
		\item \emph{Localization exact sequence}: given a closed, $G$-invariant subscheme $Z\xhookrightarrow{i} X$ whose open complement is $U\xhookrightarrow{j} X$, there exists an exact sequence
		$$CH_i^G(Z) \xrightarrow{i_*} CH_i^G(X) \xrightarrow{j^*} CH_i^G(U) \to 0 $$
		
		\item \emph{Compatibility}: given a cartesian square of $G$-schemes
		$$ \xymatrix{
			Y \ar[r]^f \ar[d]_g & X \ar[d]_{g'} \\
			Y' \ar[r]^{f'} & X' }$$
		where the horizontal morphisms are $G$-equivariant and proper, and the vertical morphisms are $G$-equivariant and flat of relative constant dimension $d$, we get a commutative diagram:
		$$ \xymatrix{
			CH_k^G(Y') \ar[r]^{f'_*} \ar[d]^{g^*} & CH_k^G(X') \ar[d]^{g'^*} \\
			CH_{k+d}^G(Y) \ar[r]^{f_*} & CH_{k+d}^G(X) } $$
		
		\item \emph{Homotopy invariance}: if $\pi:E\to X$ is a $G$-equivariant, finite rank vector bundle, then we have an isomorphism
		$$\pi^*: CH^i_G(X)\simeq CH^i_G(E) $$
		
		\item \emph{Projective bundle formula}: If $\PP(E)\to X$ is the projectivization of a $G$-equivariant , finite rank vector bundle, then for $i< {\rm rk}(E)$ we have: 
		$$ CH^i_G(\PP(E))\simeq \oplus_{j=0}^{i} CH^{j}_G(X)$$
		
		\item \emph{Ring structure}: if $X$ is smooth, then $CH_G(X)$ inherits a graded ring structure.
		
		\item \emph{Projection formula}: if $f:X\to Y$ is a $G$-equivariant proper morphism with smooth target, then there is a well-defined pullback morphism $f^*$ and a well-defined relative intersection product $f^*(-)\cdot (-)$ such that:
		\[ f_*(\xi\cdot f^*\eta)=f_*\xi\cdot \eta \]
		
		\item \emph{Gysin homomorphism}: every $G$-equivariant, local complete intersection morphism $f:X\to Y$ induces a homomorphism of groups:
		\[ f^{!}:CH^i_G(Y)\longrightarrow CH^i_G(X) \]
		such that, if $f$ is a closed embedding and $V$ is a subvariety which intersect transversally $Y$, then $f^{!}[V]=[V\cap X]$.  
	\end{enumerate}
\end{prop}
Given an equivariant vector bundle $E\to X$ of rank $r$, we can define the \emph{equivariant Chern classes} of $E$ as homomorphisms:
\[ c_i^G(E): CH^k_G(X) \longrightarrow CH^{k+i}_G(X) \]
where $i$ ranges from $0$ to $r$. If $X$ is equidimensional and $[X]$ denotes its cycle class, the cycles $c_1^G(E)([X])$ will also be called equivariant Chern classes of $E$.

If $X$ is smooth, we can restate the projective bundle formula \ref{pr:properties chow}.(6) by saying that, given an equivariant vector bundle $E\to X$ of rank $r$, we have:
\[ CH_G(\PP(E))=CH_G(X)[h]/(f) \]
where $h=c_1^G(\OO(1))$ and $f$ is a polynomial monic and of degree $r$.

If $f:X\to Y$ is a $G$-equivariant flat morphism between equidimensional schemes and $E\to Y$ is an equivariant vector bundle, we have:
\[ f^*c_i^G(E)=c_i^G(f^*E)\]
This enables us to formulate a projection formula which is also valid for non-flat morphism.
\begin{prop}\label{pr:proj formula for chern classes}
	Let $f:X\to Y$ be a proper morphism between $G$-schemes of finite type, and let $E\to X$ be an equivariant vector bundle. Then for any $\xi$ in $CH_i^G(X)$ and for any $k$, we have:
	\[ f_*(c_k^G(f^*E)(\xi))=c_k^G(E)(f_*\xi) \]
\end{prop}
The next proposition is of fundamental importance for computational purposes.
\begin{prop}\label{prop:EG}\cite{EG}*{prop. 6}
	Let $G$ be a special group with maximal subtorus $T$ and Weyl group $W$. Then for any $G$-scheme $X$ we have:
	\[CH_G(X)=CH_T(X)^W\]
\end{prop}
Next we list some explicit presentations of equivariant Chow rings.
\begin{prop}\label{pr:chow T and GLn}\cite{EG}*{pg. 14}
	\hspace{1pt}
	\begin{enumerate}
		\item Let $T=\Gm^{\oplus n}$ be a split torus. Then
		\[ CH_T(\Spec(k_0))=\ZZ[\lambda_1,\dots,\lambda_n] \]
		where $\lambda_i$ is the equivariant first Chern class of the $T$-representation induced by the projection $T\to\Gm$ on the $i^{\rm th}$ factor.
		\item We have:
		\[ CH_{{\rm GL}_n}(\Spec(k_0))=\ZZ[c_1,\dots,c_n]\]
		where $c_i$ is the $i^{\rm th}$ equivariant Chern class of the standard representation of ${\rm GL}_n$.
	\end{enumerate}
\end{prop}
\begin{rmk}\label{rmk:symmetric algebra}
	There is a natural inclusion $CH_{{\rm GL}_n}(X)\hookrightarrow CH_{\Gm^{\oplus n}}(X)$ which sends each $c_i$ into the elementary symmetric polynomial of degree $i$ in the variables $\lambda_1,\dots,\lambda_n$. Indeed, given a ${\rm GL}_n$-scheme $X$, let $U$ be an open subscheme of a representation such that the complement has codimension $>i$ and the geometric quotient $(X\times U)/{\rm GL}_n$ exists in the category of schemes: then there is a flat morphism $(X\times U)/\Gm^{\oplus n}\to (X\times U)/{\rm GL}_n$, and the inclusion of equivariant Chow rings above is induced by the pullback along this flat map.
\end{rmk}
Proposition \ref{prop:EG} will be frequently used as follows: set $G=\GLt$ and let $T$ be the subtorus of diagonal matrices. Suppose we have a $G$-invariant subvariety $V\subset X$: then the cycle class $[V]_T$ will be invariant with respect to the $S_3$-action on $\lambda_1$, $\lambda_2$ and $\lambda_3$, hence $[V]_T=p(\sigma_1,\sigma_2,\sigma_3)$, where $\sigma_i$ is the elementary symmetric polynomial of degree $i$ in the $\lambda_j$.
The $G$-equivariant cycle class of $V$ can then be reconstructed from the $T$-equivariant one, namely we will have $[V]_G=p(c_1,c_2,c_3)$.

Any $\GLt$-equivariant morphism between $\GLt$-schemes induces a $\GLt$-equivariant pushforward and a $T$-equivariant one. The proposition that follows establishes the relation between the images of these two morphisms.
\begin{prop}\cite{FV}*{lemma 2.1}
	Let $G$ be a special algebraic group and let $T\subset G$ be a maximal subtorus. Let $X$ be a smooth $G$-scheme and $I\subset CH_G(X)$ an ideal. Then:
	\[ I\cdot CH_T(X)\cap CH_G(X)= I \]
\end{prop}

The following lemma is useful to perform computations:
\begin{lm}\label{lm:class hypersurface}\cite{EF}*{lemma 2.4}
	Let $E_1,\dots,E_n$ be $T$-representations and let $H\subset \PP(E_1)\times\cdots\times\PP(E_n)$ be a $T$-invariant hypersurface defined by the multi-homogeneous equation $f=0$, with $f$ of multidegree $(d_1,\dots,d_n)$. Let $\chi:T\to\Gm$ be the character such that for every element $t$ of $T$ we have $t\cdot f=\chi^{-1}(t)f$. Then:
	\[ [H]=c_1^T(\OO_{\PP(E_1)}(d_1))+\dots +c_1^T(\OO_{\PP(E_n)}(d_n))+c_1(\chi) \]
	inside $CH_T^1(\PP(E_1)\times\cdots\times\PP(E_n))$, where $c_1(\chi)$ is the first Chern class of the $1$-dimensional $T$-representation associated to the character $\chi$.
\end{lm}
The following is an analogue of proposition \ref{pr:Gm torsor formula}.
\begin{prop}\label{prop:chow torsor formula}\cite{Vis}*{pg. 638}
	Let $L\to X$ be an equivariant line bundle over a $G$-scheme, and let $L^*$ be the associated $\Gm$-torsor. Then we have:
	\[ CH_G^i(L^*)=CH_G^i(X)/(c_1^G(L)(CH_G^{i-1}(X)) \]
\end{prop}
\begin{proof}[Sketch of proof]
	Applying the localization exact sequence (proposition \ref{pr:properties chow}.(3)) for the zero section $X\hookrightarrow L$ we get:
	\[ CH_G(X)\longrightarrow CH_G(L) \longrightarrow CH_G(L^*)\longrightarrow 0\]
	By homotopy invariance (proposition \ref{pr:properties chow}.(5)) we can identify $CH_G(L)$ with $CH_G(X)$. To complete the proof, we only have to observe that the induced morphism $$CH_G(X)\longrightarrow CH_G(L)\simeq CH_G(X)$$ coincides by construction with the first Chern class of $L$.
\end{proof}	
We end this part with a proposition that will be useful in the next subsection.
\begin{prop}\label{pr:pullback projection is an iso}
	Let $X$ be a scheme endowed with an action of a group $G$. Suppose we have a short exact sequence of $G$-equivariant locally free sheaves:
	\[0\to \Fcal\to \Ecal\to \Gcal\to 0\]
	Then there exists an equivariant flat morphism of equivariant projective bundles $p:\PP(\Ecal)\setminus i(\PP(\Fcal))\to \PP(\Gcal)$ and the pullback morphism $p^*$ is an isomorphism of equivariant Chow groups.
\end{prop}
\begin{proof}
	The proof is by direct computation. For the sake of simplicity, we will assume that $X$ is smooth, so to have a ring structure on its equivariant Chow groups. The proof works without this assumptions, but the notation get heavier. 
	
	The existence of the morphism $p$ follows from the first part of proposition \ref{pr:quotient bundle}.
	We have:
	\begin{align*}
		CH_G(\PP(\Gcal))&\simeq CH_{G\times \Gm}(X)/(c_g^{G\times\Gm}(\Gcal))\\
		&\simeq CH_G(X)[h_{\Gcal}]/(h_{\Gcal}^g+c_1^G(\Gcal)h_{\Gcal}^{g-1}+\dots+c_g^G(\Gcal)) 
	\end{align*} 
	where $g$ is the rank of $\Gcal$ and the action of $\Gm$ on $X$ is the trivial one. The formula above is just a rewriting of the usual projective bundle formula (the hyperplane section appears as the pullback of the generator of $CH_{\Gm}(\Spec(k_0))$). 
	
	We also have:
	\begin{align*}
		CH_G(\PP(\Ecal))&\simeq CH_{G\times \Gm}(X)/(c_e^{G\times\Gm}(\Ecal))\\
		& \simeq CH_G(X)[h_{\Ecal}]/(h_{\Ecal}^e+c_1^G(\Ecal)h_{\Ecal}^{e-1}+\dots + c_e^G(\Ecal))
	\end{align*} 
	It is pretty straightforward to prove that the image of the pushforward morphism from $CH_G(\PP(\Fcal))$ to $CH_G(\PP(\Ecal))$ is the ideal generated by the cycle class $[\PP(\Fcal)]_G$, which is equal to the $G\times\Gm$-equivariant top Chern class of $\Gcal$. As $c_{e}^{G\times\Gm}(\Ecal)$ is a multiple of $c_{g}^{G\times \Gm}(\Gcal)$, we deduce that:
	\[ CH_G(\PP(\Ecal)\setminus i(\PP(\Fcal)))\simeq CH_{G\times \Gm}(X)/(c_g^{G\times\Gm}(\Gcal))  \]
	The pullback morphism $p^*$ is a morphism of $CH_G(X)$-algebras and $p^*h_{\Gcal}=h_{\Ecal}$, therefore the pullback morphism is an isomorphism.
\end{proof}
\subsection{Cycle classes of $D_2^i$}
From now on, we will always assume the characteristic of the base field to be $\neq 2$.

We will write $\CHgl(X)_{\FF_2}$ for $\CHgl(X)\otimes\FF_2$ and we will denote $\bA(2,d)$ the affine space of forms in three variables of degree $d$. 

As before, the notation $\bA(2,2)_r$ will stand for the scheme of quadratic forms in three variables of rank $r$ and $\bA(2,2)_{[a,b]}$ for the scheme of quadratic forms of rank $r$ with $a\geq r\geq b$. A similar notation will be adopted for the subschemes of $\PP(2,2)$.

Every time we deal with a product of schemes $X_1\times\cdots\times X_n$, the notation $\pr_i$ will stand for the projection morphism on $X_i$, the morphism $\pr_{ij}$ will denote the projection morphism on $X_i\times X_j$, and so forth.

Recall that $\GLt$ acts on $\bA(2,2)_3$ via the formula:
\[ A\cdot q(x,y,z):=\det(A)q(A^{-1}(x,y,z)) \]
and that $\bA(2,2)_3$ is a $\GLt$-equivariant $\Gm$-torsor over $\PP(2,2)_3$.

The following is a simple but technical proposition in equivariant intersection theory. The reader who wants to have an idea of how this lemma is applied before diving into the details of the proof is suggested to jump to remark \ref{rmk:application technical lemma}.

\begin{prop}\label{prop:cycle classes computations}
	Suppose we have a $\GLt$-equivariant morphism $f:\ol{U}\to\PP(2,2)$ and form the cartesian diagram
	\[ \xymatrix{
		U \ar[r]^{q} \ar[d] & \ol{U} \ar[d]^{f} \\
		\bA(2,2)_3 \ar[r] & \PP(2,2)}
	\]
	Denote $\PP(V_n)_U$ the pullback of $\PP(V_n)$ (see definition \ref{def:Vn}) along $U\to \bA(2,2)_3$. Then we have:
	\begin{enumerate}
		\item There exists a closed subscheme $Y\subset U\times\PP(2,n)$ and an equivariant morphism $$p:U\times\PP(2,n)\setminus Y \longrightarrow \PP(V_n)_U$$ such that the pullback morphism $p^*$ at the level of equivariant Chow groups is an isomorphism.
		\item There exists an isomorphism 
		\[ \Phi:CH^k_{\GLt}(\PP(V_n)_U)\longrightarrow CH^k_{\GLt}(\ol{U}\times\PP(2,n))/(\pr_1^*f^*s-c_1) \]
		where $s=c_1^{\GLt}(\OO_{\PP(2,2)}(1))$ and $k<2n+1$.
		\item Let $Z\subset \PP(V_n)_U$ be a subvariety of codimension $< 2n$ and let $\ol{Z}\subset \ol{U}\times\PP(2,n)$ be a subvariety such that 
		\[ (q\times{\rm id})^{-1}(\ol{Z}|_{\ol{U}\times\PP(2,n)\setminus q(Y)})=p^{-1}(Z) \]
		Then $\Phi[Z]=[\ol{Z}]$ modulo the relation $\pr_1^*f^*s-c_1=0$.
	\end{enumerate}
\end{prop}
\begin{proof}
	Consider the diagram:
	\[ \xymatrix{
		& \PP(2,2)\times\PP^2 \ar[dr]_{\pr_1} \ar[dl]^{\pr_2} & \\
		\PP(2,2) & &\PP^2 }\]
	Recall from definition \ref{def:Vn} that we have an exact sequence of locally free sheaves on $\PP(2,2)$:
	\[ 0\longrightarrow \pr_{1*}\pr^*_2\OO(n-2)\otimes\OO_{\PP(2,2)}(-1) \longrightarrow \pr_{1*}\pr^*_2\OO(n-2) \longrightarrow \ol{\Vcal}_n \longrightarrow 0 \]
	and that $V_n$ is the vector bundle associated to the locally free sheaf $\Vcal_n$, the pullback of $\ol{\Vcal}_n$ to $\bA(2,2)_3$.
	
	From this we deduce the following exact sequence of locally free sheaves on $U$:
	\[ 0\longrightarrow \pr_{1*}\pr^*_2\OO(n-2)_U\longrightarrow \pr_{1*}\pr^*_2\OO(n-2)_U \longrightarrow \Vcal_{n_U} \longrightarrow 0 \]
	Applying proposition \ref{pr:pullback projection is an iso} we get a morphism
	\[ p:U\times\PP(2,n)\setminus (U\times\PP(2,n-2)) \longrightarrow  \PP(V_n)|_U\]
	such that $p^*$ is an isomorphism of equivariant Chow groups.
	This proves (1), with $Y:=U\times\PP(2,n-2)$.
	
	Using the localization exact sequence (proposition \ref{pr:properties chow}.(3)) we can construct the following isomorphism: 
	\[ \Phi_1:CH_{\GLt}^k(\PP(V_n)|_U)\xrightarrow{p^*} CH_{\GLt}^k(U\times\PP(2,n)\setminus Y)\xrightarrow{(j^*)^{-1}} CH^k_{\GLt}(U\times\PP(2,n)) \]
	where $j$ is the open embedding $(U\times\PP(2,n))\setminus Y \hookrightarrow U\times \PP(2,n)$ (the pullback $j^*$ is an isomorphism because the codimension of $Y$ is $2n+1$ and $k<2n+1$).
	
	Observe that $U\to \ol{U}$ is the $\Gm$-torsor associated to the equivariant invertible sheaf $f^*\OO_{\PP(2,2)}(-1)\otimes\mathbb{D}$, where $\mathbb{D}$ is the determinant representation of $\GLt$, whose equivariant first Chern class is $c_1$, the generator of $CH_{\GLt}^1$ (see proposition \ref{pr:chow T and GLn}.(2)). From this we deduce that $U\times\PP(2,n)\to\ol{U}\times\PP(2,n)$ is the $\Gm$-torsor associated to the line bundle $\pr_1^*f^*\OO_{\PP(2,2)}(-1)\otimes\mathbb{D}$.	Applying proposition \ref{prop:chow torsor formula}, we get:
	\[ \Phi_2:\CHgl^k(U\times\PP(2,n))\simeq \CHgl^k(\ol{U}\times\PP(2,n))/(f^*\pr_1^*s-c_1) \]
	where as usual $s=c_1^{\GLt}(\OO_{\PP(2,2)}(1))$. We define $\Phi:=\Phi_2\circ\Phi_1$. This proves (2).
	
	To prove (3), it is enough to observe that with those assumptions we have:
	\[ \Phi_1[Z]=(j^*)^{-1}p^*[Z]=(q\times {\rm id})^*[\ol{Z}]=\Phi_2^{-1}[\ol{Z}] \]
\end{proof}	

\begin{rmk}\label{rmk:application technical lemma}
	The proposition above is used in order to simplify the computation of certain cycle classes. 
	
	For instance, recall that in definition \ref{df:D1} we introduced the divisor $D_2^1 \subset \PP(V_n)_2$ whose points correspond to pairs $(q,[f])$ where $q=l_1l_2$ is a rank $2$ quadric and the subscheme of $\PP^2$ associated to the homogeneous ideal $I=(l_1,l_2,f)$ is non-empty. In proposition \ref{pr:D1 irr} we proved that $D_2^1$ is irreducible. Concretely, the variety $D_2^1$ parametrizes conics $Q$ of rank $2$ together with a section of $\OO_Q(n)$ that vanishes on the node.
	
	Let $D^1_{[3,2]}$ be the embedding of $D^1_2$ inside $\PP(V_n)_{[3,2]}$. Then we can use proposition \ref{prop:cycle classes computations} to reduce the computation of $D^1_{[3,2]}$ to the computation of a cycle class in the equivariant Chow ring of $\PP(2,2)\times\PP(2,n)\times\PP^2$, by applying it to the morphism $\PP(2,2)_{[3,2]}\hookrightarrow\PP(2,2)$.
	
	More precisely, consider the $\GLt$-invariant, closed subscheme of $\PP(2,2)\times\PP(2,n)\times\PP^2$ defined as:
	$$ \ol{\Zcal}^1:=\left\{ (q,f,u)\text{ such that }q_x(u)=q_y(u)=q_z(u)=f(u)=0 \right\} $$
	Let $\ol{Z}^1$ be its image via the projection on $\PP(2,2)\times\PP(2,n)$.
	
	By proposition \ref{prop:cycle classes computations}.(2) applied to the morphism $\PP(2,2)_{[3,2]}\hookrightarrow\PP(2,2)$, we get an isomorphism:
	\[ \Phi: \CHgl^2(\PP(V_n)_{[3,2]}) \simeq \CHgl^2(\PP(2,2)_{[3,2]}\times\PP(2,n))/(s-c_1) \]
	where $s=c_1^{\GLt}(\OO_{\PP(2,2)_{[3,2]}}(1))$.
	
	Moreover, by proposition \ref{prop:cycle classes computations}.(3), we have that the cycle class of the restriction of $\ol{Z}^1$ to $\PP(2,2)_{[3,2]}\times\PP(2,n)$ is equal to $\Phi[D_{[3,2]}^1]$, thus to prove the lemma it is enough to compute $[\ol{Z}^1]$ inside $CH^2_{\GLt}(\PP(2,2)_{[3,2]}\times\PP(2,n))_{\FF_2}$ and substitute $s$ with $c_1$.
	
	Observe that $\pr_{12*}[\ol{\Zcal}^1]=[\ol{Z}^1]$ because $\pr_{12}$ restricted to $\ol{\Zcal}_1$ is an isomorphism, so that we have reduced ourselves to the computation of $[\ol{\Zcal}^1]$: this latter task is rather easy to complete, because $\ol{\Zcal}^1$ turns out to be a complete intersection of $T$-invariant hypersurfaces.
\end{rmk}

\begin{lm}\label{lm:class of D1}
	We have $[D_{[3,2]}^1]=c_1h\neq 0$ in $\CHgl^2(\PP(V_n)_{[3,2]})_{\FF_2}$.
\end{lm}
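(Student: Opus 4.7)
The plan is to compute $[D\nod^1]$ by lifting it to an incidence variety over $\bA(2,2)\times\PP(2,n)\times\PP^2$ and applying the equivariant Chern-class formalism. By Remark 5.1 together with the fact that $\Qptd\times\PP(2,n)\setminus\Qptd\times\PP(2,n-2)\to\PP(V_n)$ is an affine-bundle projection inducing an isomorphism on $\CHgl$ in small codimension, the class $[D\nod^1]\in\CHgl^2(\PP(V_n)\red)_{\FF_2}$ corresponds to a class $[\tilde D\nod^1]\in\CHgl^2(\bA(2,2)\times\PP(2,n))$, where $\tilde D\nod^1$ is the closure of $\{(q,[f])\in\Qnod\times\PP(2,n):f\text{ vanishes at the node of }q\}$; under this identification the hyperplane class $s$ of $\PP(2,n)$ is identified with the hyperplane class $h$ of $\PP(V_n)$. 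Since $\Qsq$ has codimension $3$ in $\bA(2,2)$, it may be ignored for codimension-$2$ classes.

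The key construction is the incidence variety
$$W=\{(q,[f],u)\in\bA(2,2)\times\PP(2,n)\times\PP^2:\nabla q(u)=0,\ f(u)=0\},$$
cut out by four equations. Over $\Qnod$, $\nabla q(u)=0$ singles out the unique node of $q$, so the projection $\pi:W\to\bA(2,2)\times\PP(2,n)$ is birational onto $\tilde D\nod^1$, while the contribution over $\Qsq$ has strictly smaller dimension. Hence $\pi_*[W]=[\tilde D\nod^1]$, and by proper intersection $[W]=[\{\nabla q=0\}]\cdot[\{f(u)=0\}]$. The class $[\{f(u)=0\}]$ equals $s+nh$, since $f(u)$ is $\GLt$-invariant and thus is a section of $\OO_{\PP(2,n)}(1)\otimes\OO_{\PP^2}(n)$ with trivial equivariant character. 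The locus $\{\nabla q(u)=0\}$ is the total space of a rank-$3$ subbundle of the trivial rank-$6$ bundle $\bA(2,2)\otimes\OO_{\PP^2}$; its class is the pullback of $c_3(Q_N)$, where $Q_N$ is the rank-$3$ equivariant quotient bundle on $\PP^2$ associated to the surjection $q\mapsto\nabla q|_u$.

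The heart of the computation is the equivariant identification $Q_N\cong V^\vee\otimes\det\otimes\OO_{\PP^2}(1)$, where $V$ is the standard representation of $\GLt$: the $\det$-twist comes from the factor $\det(A)$ in $A\cdot q=\det(A)q(A^{-1}\cdot)$, the $V^\vee$-factor from the transformation $\nabla q(u)\mapsto\det(A)A^{-T}\nabla q(u)$, and the $\OO_{\PP^2}(1)$-twist from the linearity of $\nabla q$ in $u$. A routine Chern-class computation in $\CHgl(\PP^2)_{\FF_2}=\FF_2[c_1,c_2,c_3][h]/(h^3+c_1h^2+c_2h+c_3)$ then yields $c_3(Q_N)$; multiplying by $s+nh$, reducing $h^3$ via the projective-bundle relation, and extracting the $h^2$-coefficient (which is $\pi_*$ of the $\PP^2$-fiber) produces $\pi_*[W]=c_1 s$. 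Substituting $s\leftrightarrow h$ gives $[D\nod^1]=c_1 h$. The main obstacle is the equivariant identification of $Q_N$: tracking the $\det$-twist and the $\OO_{\PP^2}(1)$-twist correctly is essential, since an error in either would change the final Chern class over $\FF_2$. A secondary concern is the cycle-theoretic identity $[W]=[\{\nabla q=0\}]\cdot[\{f=0\}]$, which follows from the codimensions adding correctly to $4$ together with generic transversality over $\Qnod$.
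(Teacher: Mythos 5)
Your proposal is correct and follows essentially the same route as the paper's proof: both compute the class of the incidence variety $\{\nabla q(u)=f(u)=0\}$ in the product with $\PP^2$ and push forward by extracting the coefficient of the square of the $\PP^2$-hyperplane class, reducing modulo $2$ to obtain $c_1h$. The only cosmetic differences are that the paper projectivizes the quadric factor, multiplies three torus-equivariant hypersurface classes and then substitutes $s\mapsto c_1$, whereas you stay over $\bA(2,2)$ (discarding $\Qsq$ by codimension) and package the gradient conditions as $c_3\bigl(V^\vee\otimes\det\otimes\OO_{\PP^2}(1)\bigr)$; both reduce to the same product $\prod_i(t+c_1-\lambda_i)\cdot(h+nt)$ and the same pushforward.
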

\begin{proof}
	As explained in remark \ref{rmk:application technical lemma}, all we have to do is to compute $[\ol{\Zcal}^1]$ in the $\GLt$-equivariant Chow ring of $\PP(2,2)\times\PP(2,n)\times\PP^2$. We can actually reduce to the $T$-equivariant Chow ring by proposition \ref{prop:EG}.
	
	The $T$-equivariant Chow ring of $\PP(2,2)\times\PP(2,n)\times\PP^2$ is isomorphic to
	\[ \ZZ[\lambda_1,\lambda_2,\lambda_3,s,h,t]/(t^3+t^2\sigma_1(\lambda_i)+t\sigma_2(\lambda_i)+\sigma_3(\lambda_i),R_s,R_h) \]
	where $s=c_1^T(\OO_{\PP(2,2)}(1))$, $h=c_1^T(\OO_{\PP(2,n)}(1))$, $t=c_1^T(\OO_{\PP^2}(1))$, the $\lambda_i$ come from $CH_T(\Spec(k_0))$ (see proposition \ref{pr:chow T and GLn}.(1)), $\sigma_i$ is the elementary symmetric polynomial of degree $i$ in three variables and $R_s$ and $R_h$ are monic polynomials respectively in $s$ and $h$, with coefficients in $\CHgl(\Spec(k_0))$. To obtain this isomorphism, one has to apply the projective bundle formula (proposition \ref{pr:properties chow}.(6)) two times.
	
	The scheme $\ol{\Zcal}^1$ is a complete intersection of four $T$-invariant hypersurfaces, which are
	\begin{align*}
		H_1:=\{q_x(p)=0\}\text{,	} & H_2:=\{q_y(p)=0\} \\
		H_3:=\{q_z(p)=0\}\text{,	} &H_4:=\{f(p)=0\} 
	\end{align*}
	Thanks to lemma \ref{lm:class hypersurface}, we can compute the cycle class of each hypersurface and then we can use the fact that $[\ol{\Zcal}^1]_T=\prod [H_i]_T$. The final result is:
	\[ [\ol{\Zcal}^1]_T= (s+t-\lambda_1)(s+t-\lambda_2)(s+t-\lambda_3)(h+nt) \]
	Using the relation $t^3+t^2\sigma_1(\lambda_i)+t\sigma_2(\lambda_i)+\sigma_3(\lambda_i)=0$ and after tensoring with $\FF_2$, we get:
	\begin{align*}
		&[\ol{\Zcal}^1]_T=t^2(s^2+sh+s\sigma_1(\lambda_i)) +t\xi_1+\xi_2&\text{ for }n\text{ odd } \\
		&[\ol{\Zcal}^1]_T=t^2(hs) +t\xi_1+\xi_2&\text{ for }n\text{ even }
	\end{align*}
	We have a cartesian diagram:
	\[ \xymatrix{
		\PP(2,2)\times\PP(2,n)\times\PP^2\ar[r]\ar[d] & \PP^2 \ar[d] \\
		\PP(2,2)\times\PP(2,n) \ar[r] & \Spec(k_0)} \]
	Hence, using the compatibility (proposition \ref{pr:properties chow}.(4)) and the projection formula (proposition \ref{pr:properties chow}.(8)), we deduce:
	\begin{align*}
		&[\ol{Z}^1]_T=s^2+sh+s\sigma_1(\lambda_i)&\text{ for }n\text{ odd } \\
		&[\ol{Z}^1]_T=hs&\text{ for }n\text{ even }
	\end{align*}
	inside $CH^2_T(\PP(2,2)\times\PP(2,n))_{\FF_2}$. We now apply proposition \ref{prop:EG} to deduce that:
	\begin{align*}
		&[\ol{Z}^1]=s^2+sh+s c_1&\text{ for }n\text{ odd } \\
		&[\ol{Z}^1]=hs&\text{ for }n\text{ even }
	\end{align*}
	inside $\CHgl^2(\PP(2,2)\times\PP(2,n))_{\FF_2}$, because the Weyl group associated to the maximal subtorus $T\subset \GLt$ is the symmetric group.
	
	Finally, substituting $s$ with $c_1$, we get $[\ol{Z}^1]=c_1h$ mod $2$, hence $[D_{[3,2]}^1]=c_1h$. This last element is not zero because by the projective bundle formula (proposition \ref{pr:properties chow}.(6)) we have that $\CHgl^1(\bA(2,2)_{[3,2]})_{\FF_2}\cdot h=\FF_2\cdot c_1h$ is a non-zero direct summand.
\end{proof}
Recall that we defined another divisor $D_2^2$ in $\PP(V_n)_2$, whose points are the pairs $(q,[f])$ such that $q=l_1l_2$ has rank $2$ and the plane curves $l_i=0$ and $f=0$ do not intersect transversally in $\PP^2$ for some $i$ (see definition \ref{df:D2}).

By proposition \ref{pr:D2 irr} we know that $D_2^2$ is an irreducible divisor in $\PP(V_n)_2$. Let $D_{[3,2]}^2$ be its embedding in $\PP(V_n)_{[3,2]}$.
\begin{lm}\label{lm:cycle class of D2}
	We have:
	\begin{itemize}
		\item $[D^2_2]=0$ in $\CHgl^1(\PP(V_n)_2)_{\FF_2}$.
		\item $[D^2_{[3,2]}]=0$ in $\CHgl^2(\PP(V_n)_{[3,2]})_{\FF_2}$.
	\end{itemize} 
\end{lm}
\begin{proof}
	Clearly, it is enough to show that $[D^2_2]=0$ in $\CHgl^1(\PP(V_n)_2)_{\FF_2}$.
	
	Recall that in the proof of proposition \ref{pr:D2 irr} we proved that there exists a closed subscheme $Y^2\subset \PP(2,1)\times\PP(2,n)$ whose points are pairs $(l,f)$ such that the plane curves $l=0$ and $f=0$ do not intersect transversally. Consider the image of $\PP(2,1)\times Y^2$ via the morphism
	\[\pi:\PP(2,1)\times\PP(2,1)\times\PP(2,n)\longrightarrow \PP(2,2)_{[2,1]}\times\PP(2,n)\]
	which sends $(l_1,l_2,f)$ to $(l_1l_2,f)$, and let $\ol{Z}^2$ be the restriction of this image to $\PP(2,2)_2\times\PP(2,n)$.
	
	By proposition \ref{prop:cycle classes computations} applied to the morphism $f:\PP(2,2)_2\to\PP(2,2)$, we get an isomorphism:
	\[ \Phi:\CHgl^1(\PP(V_n)_2)\simeq \CHgl^1(\PP(2,2)_2\times\PP(2,n))/(f^*s-c_1) \]
	such that $\Phi [D_2^2]=[\ol{Z}^2]$ modulo $f^*s-c_1$. Hence, it is enough to show that $[\ol{Z}^2]=0$ in $\CHgl(\PP(2,2)_2\times\PP(2,n))_{\FF_2}$.
	
	We claim that $[Y^2]=0$ in $\CHgl^1(\PP(2,1)\times\PP(2,n))_{\FF_2}$. This claim implies the proposition, because the morphism $\PP(2,1)\times Y^2\to\ol{Z}^2$ is generically bijective.
	
	To compute $[Y^2]$ we will first find an explicit expression for $[\Ycal^2]$ inside $\CHgl(\PP(2,1)\times\PP(2,n)\times\PP^2)$, where:
	\[  \Ycal^2:=\left\{ (l,f,p) \text{ such that }l(p)=f(p)={\det}_i J(l,f)(p)=0 \right\}\]
	Here $\det_i J(l,f)$ stands for the determinant of the minor of the Jacobian matrix $J(l,f)$ obtained by removing the $i^{\rm th}$ column.
	
	This would enable us to compute $[Y^2]$ because $\pr_{12*}[\Ycal^2]=[Y^2]$.
	
	Observe that the scheme $\Ycal^2$ is not a complete intersection but, if we restrict to the open subscheme of $\PP^2$ where $p_2\neq 0$, then we need exactly three equations to describe the restriction over this open subscheme of $\Ycal^2$, namely $l(p)=f(p)=\det_3 J(l,f)(p)=0$. 
	
	Consider the $T$-invariant subscheme
	$$\Ycal^2_1:=\left\{ (l,f,p)\text{ such that } l(p)=f(p)={\det}_3 J(l,f)(p)=0 \right\}$$
	where $T$ is the usual subtorus of $\GLt$ made of the diagonal matrices. 
	Then we have that $\Ycal^2_1$ has two irreducible components, which are $\Ycal^2$ and the $T$-invariant subscheme
	$$ \Ycal^2_2:=\left\{ (l,f,p)\text{ such that }l(p)=f(p)=p_2=0 \right\} $$
	From this we deduce that $[\Ycal^2]_T=[\Ycal^2_1]_T-[\Ycal^2_2]_T$ in $CH_T(\PP(2,1)\times\PP(2,n)\times\PP^2)$. 
	
	Observe that $\Ycal^2_1$ is a complete intersection of three $T$-invariant hypersurfaces $H_1$, $H_2$ and $H_3$ and that $\Ycal_2^2$ is a complete intersections of three $T$-invariant hypersurfaces $H_1$, $H_2$ and $H_4$, where:
	\begin{align*}
		H_1&=\left\{ l(p)=0 \right\}\\
		H_2&=\left\{ f(p)=0 \right\}\\
		H_3&=\left\{ {\det}_3J(l,f)(p)=0 \right\}\\
		H_4&=\left\{ p_2=0 \right\}
	\end{align*}
	Hence $[\Ycal_1^2]_T=[H_1]_T[H_2]_T[H_3]_T$ and $[\Ycal^2_2]_T=[H_1]_T[H_2]_T[H_4]_T$. We have that $CH_T(\PP(2,1)\times\PP(2,n)\times\PP^2)$ is generated as a $CH_T(\Spec(k_0))$-algebra by the following elements:
	\begin{align*}
		v=\pr_1^*c_1^T(\OO_{\PP(2,1)}(1)) \\
		h=\pr_2^*c_1^T(\OO_{\PP(2,n)}(1)) \\
		t=\pr_3^*c_1^T(\OO_{\PP^2}(1))
	\end{align*}
	We can compute $[H_i]_T$ using the formula given in lemma \ref{lm:class hypersurface}. In the end we get:
	\begin{align*}
		[\Ycal_1^2]-[\Ycal_2^2]&=(s+t)(h+nt)(s+h+(n-1)t-\lambda_1-\lambda_2)\\
		&-(s+t)(h+nt)(t+\lambda_3)=\\
		&=(s+t)(h+nt)(s+h+(n-2)t-\sigma_1(\lambda_i))
	\end{align*}
	Expanding the expression above, using the identity $t^3+\sigma_1(\lambda_i)t^2+\sigma_2(\lambda_i)t+\sigma_3(\lambda_i)=0$ and after tensoring with $\FF_2$, we obtain that 
	\[ [\Ycal^2]_T=t\xi_1 + \xi_2 \]
	for some $\xi_i$ in $CH_T(\PP(2,1)\times\PP(2,n))$. The compatibility property (see proposition \ref{pr:properties chow}.(4)) applied to the diagram
	\[ \xymatrix{
		\PP(2,1)\times\PP(2,n)\times\PP^2 \ar[r] \ar[d] & \PP^2 \ar[d] \\
		\PP(2,1)\times\PP(2,n) \ar[r] & \Spec(k_0)} \]
	implies that $[Y^2]_T=\pr_{12*}[\Ycal^2]_T=0$. 
	
	By injectivity of the morphism $\CHgl(\PP(2,1)\times\PP(2,n))\hookrightarrow CH_T(\PP(2,1)\times\PP(2,n))$ (see proposition \ref{prop:EG}), we conclude that $[Y^2]=0$.
\end{proof}
\subsection{Cycle classes of $D'^i_1$}
Let $\wh{Q}\to\bA(2,2)_{[3,1]}$ be the pullback of the universal conic $Q\to\PP(2,2)$, and let $\wh{Q}^{\rm sing}$ be the closed subscheme of singular points, i.e. the scheme defined by $1^{\rm th}$-Fitting ideal of $\Omega^1_{\wh{Q}/\bA(2,2)_{[3,1]}}$.

Recall that $\wh{Q}^{\rm sing}_{[2,1]}\to\bA(2,2)_{[2,1]}$ is a birational morphism, which is an isomorphism over $\bA(2,2)_2$ (see lemma \ref{lm:Ptilde2=P2}). Moreover, the fibre $\wh{Q}^{\rm sing}_1$ over $\bA(2,2)_1$ is a projective subbundle of $\bA(2,2)_1\times\PP^2$, defined by the equation:
\[ \wh{Q}^{\rm sing}_1=\left\{(l^2,p)\text{ such that }l(p)=0 \right\} \]
Observe that all these schemes inherits a $\GLt$-action, which comes from the diagonal action of $\GLt$ on the product $\bA(2,2)\times\PP^2$.

In what follows, the projective bundle $\wh{Q}^{\rm sing}_1$ will be simply denoted $E_1$, because we think of it as an exceptional divisor.
\begin{lm}\label{lm:Chow ring of E}
	We have 
	\[\CHgl(E_1)\simeq \ZZ[c_1,c_2,c_3,t,v]/(c_1-2v,f_t,f_v)\]
	where $t$ is the restriction to $E_1$ of $c_1^{\GLt}(\OO_{\PP^2}(1))$, $f_t$ is a polynomial of degree $2$ monic in $t$ and $f_v$ is a polynomial of degree $3$ monic in $v$.
\end{lm}
\begin{proof}
	We have that $E_1\to\bA(2,2)_1$ is a projective bundle, hence applying the projection bundle formula (proposition \ref{pr:properties chow}.(6)) we get:
	$$ \CHgl(E_1)\simeq\CHgl(\bA(2,2)_1)[t]/(f_t) $$
	where $t$ is the restriction to $E_1\subset\bA(2,2)_1\times\PP^2$ of $c_1^{\GLt}(\OO_{\PP^2}(1))$.
	
	Observe that $\bA(2,2)_1$ is $\Gm$-torsor over $\PP(2,2)_1$ associated to the $\GLt$-equivariant line bundle $\OO_{\PP(2,2)}(-1)|_{\PP(2,2)_1}\otimes\mathbb{D}$, where $\mathbb{D}$ is the determinant representation of $\GLt$. Applying proposition \ref{prop:chow torsor formula} we deduce:
	\[ \CHgl(\bA(2,2)_1)\simeq \CHgl(\PP(2,2)_1)/(c_1-s) \]
	where $s=c_1^{\GLt}(\OO_{\PP(2,2)}(1)|_{\PP(2,2)_1})$.
	
	There is an equivariant isomorphism
	\[\varphi:\PP(2,1)\longrightarrow\PP(2,2)_1,\quad l\longmapsto l^2 \]
	which induces an isomorphism at the level of Chow rings:
	\[\varphi^*:\CHgl(\PP(2,2)_1)\simeq \CHgl(\PP(2,1))\simeq \ZZ[c_1,c_2,c_3,v]/(f_v) \] 
	where $v=c_1^{\GLt}(\OO_{\PP(2,1)}(1))$.
	
	Observe that $\varphi^*(\OO_{\PP(2,2)}(-1)|_{\PP(2,2)_1}\otimes\mathbb{D})=\OO_{\PP(2,1)}(-2)\otimes\mathbb{D}$, hence:
	\[ \CHgl(\PP(2,2)_1)/(s-c_1)\simeq \ZZ[c_1,c_2,c_3,v]/(f_v,c_1-2v)  \]
	This concludes the proof of the lemma.
\end{proof}
Recall from definition \ref{df:Ptilde} that $\wt{\PP}(V_n)_{[2,1]}$ denotes the pullback of $\PP(V_n)_{[2,1]}\to\bA(2,2)_{[2,1]}$ along the morphism $\wh{Q}^{\rm sing}_{[2,1]}\to\bA(2,2)_{[2,1]}$, and that $\wt{\PP}(V_n)_1$ is the restriction of $\wt{\PP}(V_n)_{[2,1]}$ to $E_1=\wh{Q}^{\rm sing}_1$.

In definition \ref{df:D'1} we also introduced the divisor $D'_{[2,1]}$, which is the closure of the preimage of $D_2$ in $\wt{\PP}(V_n)_{[2,1]}$. We also defined $D'_1$ as the restriction of $D'_{[2,1]}$ to $E_1$.

By proposition \ref{pr:DtildeE has two irr comp}, we know that the divisor $D'_1$ has two irreducible components, denoted $D'^1_1$ and $D'^2_1$: the points of $D'^1_1$ are the triples $(l^2,[f],p)$ such that $l(p)=f(p)=0$. The points of $D'^2_1$ are the triples $(l^2,[f],p)$ such that $l(p)=0$ and the plane curves $l=0$ and $f=0$ do not intersect transversally (see definition \ref{df:D'1i} and remark \ref{rmk:points of D'i}).

We want to compute the cycle classes of these two components.
\begin{lm}\label{lm:cycle class of Dtilde1}
	We have $[D'^1_1]=h+nt$ in $\CHgl^1(\wt{\PP}(V_n)_1)$, where $h$ denotes the hyperplane section of the projective bundle $\wt{\PP}(V_n)_1\to E_1$ and $t$ comes from the hyperplane section of $\PP^2$.
\end{lm}
\begin{proof}
	Consider the cartesian diagram:
	\[\xymatrix{
		E_1 \ar[d] \ar[r] & \ol{E}_1 \ar[d]^{g} \\
		\bA(2,2)_{[3,1]} \ar[r] & \PP(2,2)}
	\]
	where $\ol{E}_1=Q^{\rm sing}_1$.
	We can apply proposition \ref{prop:cycle classes computations} to the diagram above: we obtain an isomorphism
	\[ \Phi: \CHgl(\wt{\PP}(V_n)_1)\simeq \CHgl(\ol{E}_1\times\PP(2,n))/(g^*s-c_1) \]
	Let $\ol{Z'}^1\subset \ol{E}_1\times\PP(2,n)$ be the $\GLt$-invariant subvariety defined as:
	$$\ol{Z'}^1:=\left\{ (l^2,f,p)\text{ such that }l(p)=f(p)=0 \right\} $$
	Then proposition \ref{prop:cycle classes computations} also implies that $\Phi[D'^1_1]=[\ol{Z'}^1]$ modulo the relation $g^*s-c_1=0$.
	
	Consider the cartesian diagram
	\[\xymatrix{
		\varphi^*\ol{E}_1 \ar[r] \ar[d] & \ol{E}_1 \ar[d] \\
		\PP(2,1) \ar[r]^{\varphi} & \PP(2,2)_1}
	\]
	where $\varphi$ sends $[l]$ to $[l^2]$. The horizontal arrows are then equivariant isomorphisms.
	
	We have an induced isomorphism:
	\begin{align*}
		\varphi^*:\CHgl(\ol{E}_1\times\PP(2,n))/(g^*s-c_1)&\simeq \CHgl(\varphi^*\ol{E}_1\times\PP(2,n))/(\varphi^*g^*s-c_1)\\
		&\simeq \ZZ[c_1,c_2,c_3,t,v,h]/(2v-c_1,f_t,f_v,f_h)
	\end{align*} 
	where $h=c_1^{\GLt}(\OO_{\PP(2,n)}(1))$ and the other generators are as in lemma \ref{lm:Chow ring of E}.
	The isomorphism $\varphi^*$ sends $[\ol{Z'}^1]$ to $[\varphi^{-1}\ol{Z'}^1]$. To compute the cycle class of $\varphi^{-1}\ol{Z'}^1$ we consider the subscheme of $\PP(2,1)\times\PP(2,n)\times\PP^2$ defined as:
	\[ \Wcal^1=\left\{ (l,f,p) \text{ such that }f(p)=0 \right\} \]
	If $i:\varphi^*\ol{E}_1\times \PP(2,n) \hookrightarrow \PP(2,1)\times\PP(2,n)\times\PP^2$ is the closed immersion, then we have $i^{!}[\Wcal^1]=[\ol{\Zcal'}^1]$ (see proposition \ref{pr:properties chow}.(9)). We can apply lemma \ref{lm:class hypersurface} to $\Wcal_1$, which is defined by a $T$-invariant equation of tridegree $(0,1,n)$, so that we get $[\Wcal^1]=h+nt$.
	
	This implies: 
	\[[\ol{\Zcal'}^1]=i^{!}[\Wcal^1]=h+nt\]
	Here there is a little and quite common abuse of notation, as we are denoting $t$ both the pullback of $c_1^{\GLt}(\OO_{\PP^2}(1))$ to $\PP(2,1)\times\PP^2$ and its restriction to $\varphi^*\ol{E}_1$. This concludes the proof.
\end{proof}
Observe that $D'^1_1$ can also be seen as a codimension $2$ integral subscheme of $\wt{\PP}(V_n)_{[2,1]}$.
\begin{lm}\label{cor:cycle class Dtilde1 non zero}
	The class $[D'^1_1]$ is non-zero in $\CHgl^2(\wt{\PP}(V_n)_{[2,1]})_{\FF_2}$.
\end{lm}
To prove the lemma above, we need a technical result.
\begin{lm}\label{lm:E1 not zero}
	The cycle class $[E_1]$ is not zero in $\CHgl^1(\wh{Q}\sng_{[2,1]})_{\FF_2}$.
\end{lm}
\begin{proof}
	From remark \ref{rmk:Qsng}.(2) we know that there is a cartesian diagram:
	\[\xymatrix{
		\wh{Q}\sng_{[2,1]} \ar[r]^{f'} \ar[d]^{q'} & Q\sng_{[2,1]} \ar[d]^{q} \\
		\bA(2,2)_{[3,1]} \ar[r]^{f} & \PP(2,2) }\]
	Therefore, the top horizontal morphism is a $\Gm$-torsor whose associated equivariant line bundle is $q^*\OO_{\PP(2,2)}(-1)\otimes\mathbb{D}$, where the latter is the determinant representation of $\GLt$.
	
	This implies (proposition \ref{prop:chow torsor formula}) that the pullback morphism
	\[f'^*:\CHgl(Q\sng_{[2,1]})\longrightarrow\CHgl(\wh{Q}\sng_{[2,1]})\]
	is surjective with kernel the ideal $(c_1-q^*s)$. In particular, to compute $[E_1]$ we can equivalently check that $[\ol{E}_1]$ is not zero modulo the ideal $(c_1-q^*s)$, where $\ol{E}_1=Q\sng_1$. This is because we have the cartesian diagram:
	\[\xymatrix{
		E_1 \ar[r] \ar[d] & \wh{Q}\sng_{[2,1]} \ar[d] \\
		\ol{E}_1 \ar[r] & Q\sng_{[2,1]}}\]	
	The equivariant Chow ring of $Q\sng_{[2,1]}$ can be easily determined applying the projective bundle formula (proposition \ref{pr:properties chow}.(6)), because $Q\sng_{[2,1]}$ is a projective bundle over $\PP^2$ (see remark \ref{rmk:Qsng}). We have:
	\[ \CHgl(Q\sng_{[2,1]})\simeq\ZZ[c_1,c_2,c_3,s,t]/(R_s,R_t) \]
	where $t$ is the pullback of the hyperplane section of $\PP^2$ and $s$ is the restriction of the hyperplane section of $\PP(2,2)$.
	
	Proposition \ref{prop:EG} tells us that for any $\GLt$-scheme $X$ we have an inclusion of $\CHgl(X)$ inside $CH_T(X)$, where $T$ denotes the subtorus of diagonal matrices: applying this to our case, we deduce that it is enough to show that $[\ol{E}_1]_T\neq 0$ modulo the ideal $(\sigma_1(\lambda_i)-s)$, where the $\lambda_i$ are the generators of $CH_T(\Spec(k_0))$ (see proposition \ref{pr:chow T and GLn}).
	
	Let $U$ be the open, $T$-invariant subscheme $\PP^2\setminus\{[1:0:0],[0:1:0]\}$ of $\PP^2$. Denote $Q\sng_{[2,1]}|_U$ (resp. $\ol{E}_1|_U$) the restriction over $U$ of $Q\sng_{[2,1]}\to\PP^2$ (resp. $\ol{E}_1$). The localization exact sequence (proposition \ref{pr:properties chow}.(3)) implies that the restriction morphism induces an isomorphism 
	\begin{align*}
		CH^1_T(Q\sng_{[2,1]}|_U)/\langle \sigma_1(\lambda_i)-s \rangle &\simeq CH^1_T(Q\sng_{[2,1]})/\langle \sigma_1(\lambda_i)-s \rangle\\
		&\simeq \ZZ\langle \lambda_1,\lambda_2,\lambda_3,s,t\rangle\ / \langle \sigma_1(\lambda_i)-s \rangle
	\end{align*} 
	because the complement of the open subscheme $U$ in $\PP^2$ has codimension $2$.
	We have reduced ourselves to show that $[\ol{E}_1|_U]\neq 0$ modulo $\sigma_1(\lambda_i)-s$.
	
	We can write $[\ol{E}_1|_U]_T=ns+mt+\sum_{i=1}^{3} k_i\lambda_i$, where $n$, $m$ and $k_i$ are coefficients in $\FF_2$. Observe that it is enough to prove $m\neq 0$. If $p:\ol{E}_1|_U\to U$ is the usual projection, then we have:
	\begin{align*} 
		p_*([\ol{E}_1|_U]\cdot s^2)&=np_*s^3+\sum_{i=1}^{3}k_i\lambda_i\cdot p_*s^2+mt\cdot p_*s^2=\xi+mt
	\end{align*}
	where $\xi$ is linear polynomial in the $\lambda_i$. In the identity above we used several facts: the projection formula (proposition \ref{pr:properties chow}.(8)) to obtain $mp_*(t\cdot s^2)=mt\cdot p_*s^2$, the fact that for every projective bundle $p:\PP(E)\to X$ whose fibres have dimension $d$ we have $p_*c_1(O_E(1))^d=[X]$ (see \cite{Ful}*{prop. 3.1.(a).ii}) and finally that the relation $R_s=0$ is a monic polynomial of degree $3$ in $s$ with coefficients in $CH_T(\Spec(k_0))$.
	If we prove that $m\neq 0$, we are done, because $\{\lambda_1,\lambda_2,\lambda_3,t\}$ is a basis for $CH^1_T(\PP^2)_{\FF_2}$.
	
	Let $H_1$ be the $T$-invariant hyperplane in the projective bundle $\PP(2,2)\times U\to U$ whose points are those pairs $(q,p)$ such that $q(1,0,0)=0$. Similarly, define $H_2$ as the hyperplane whose points are pairs $(q,p)$ such that $q(0,1,0)=0$. The equations defining these two hyperplanes are respectively $q_{11}=0$ and $q_{22}=0$, where $q=q_{11}x^2+q_{22}y^2+\dots$.
	
	Lemma \ref{lm:class hypersurface}, together with the description above of $H_i$, tells us that $[H_i]_T=c_1^{T}(\OO_{\PP(2,2)}(1))+2\lambda_i$. Observe that 
	\[i:Q\sng_{[2,1]}|_U\hookrightarrow \PP(2,2)\times U \]
	is a regular embedding because both the domain and the target are smooth, hence there is a well defined Gysin homomorphism (proposition \ref{pr:properties chow}.(9)):
	\[ i^!:CH_T(\PP(2,2)\times U)\longrightarrow CH_T(Q\sng_{[2,1]}|_U) \]
	In particular, $i^!([H_1]_T\cdot [H_2]_T)=s^2$ in $CH_T(Q\sng_{[2,1]}|_U)_{\FF^2}$. On the other hand, we have that $i^!([H_1]_T[H_2]_T)=[H_1\cap H_2\cap Q\sng_{[2,1]}|_U]_T$, where:
	\[ H_1\cap H_2 \cap Q\sng_{[2,1]}|_U=\left\{ \begin{matrix}
	(q,p) \text{ such that } q_x(p)=q_y(p)=0\\
	q_z(p)=q(1,0,0)=q(0,1,0)=0 \\
	\text{ and }p\neq[1:0:0],[0:1:0]
	\end{matrix}  \right\} \]
	We can write $p_*[\ol{E}_1]_T\cdot s^2=p_*[\ol{E}_1\cap H_1\cap H_2]_T$, and $\ol{E}_1\cap H_1\cap H_2$ is sent by $p$ onto the restriction to $U$ of the unique line $L=\{z=0\}$ that passes through $(1,0,0)$ and $(0,1,0)$. Moreover, $\ol{E}_1\cap H_1\cap H_2$ is 1:1 on $L$, therefore
	\[ p_*[\ol{E}_1\cap H_1\cap H_2]_T=[L]_T=t+\lambda_3 \]
	This proves that $[\ol{E}_1|_U]_T=t+\xi$ in $CH_T(Q\sng_{[2,1]}|_U)_{\FF_2}$, where $\xi$ is a linear combination of $\lambda_i$ and $s$, and it concludes the proof.	
\end{proof}
Now we are ready to give a proof of lemma \ref{cor:cycle class Dtilde1 non zero}.
\begin{proof}[Proof of lemma \ref{cor:cycle class Dtilde1 non zero}]
	From lemma \ref{lm:cycle class of Dtilde1} we know that $[D'^1_1]=h+nt$ in $\CHgl^1(\wt{\PP}(V_n)_1)$, where $h$ is the hyperplane section of the projective bundle $\wt{\PP}(V_n)_1\to E_1$ and $t$ is the pullback of the hyperplane section of $\PP^2$ along the morphism $\wt{\PP}(V_n)_1\to E_1\to\PP^2$.
	
	Let $j:E_1\to\wh{Q}\sng_{[2,1]}$ be the closed immersion, so that we have a cartesian square:
	$$\xymatrix{
		\wt{\PP}(V_n)_1 \ar[r]^{j'} \ar[d]^{p'} & \wt{\PP}(V_n)_{[2,1]} \ar[d]^{p} \\
		E_1 \ar[r]^{j} & \wh{Q}^{\rm sing}_{[2,1]} }$$
	We want to prove that $j'_*[D'^1_1]$ is non-zero in $\CHgl^2(\wt{\PP}(V_n)_{[2,1]})_{\FF^2}$.
	
	We have:
	\begin{align*}
		j'_*[D'^1_1]&=j'_*h+nj'_*t=j'_*(h\cdot p'^*1)+j'_*p'^*c_1^{\GLt}(\OO_{\PP^2}(1))\\
		&=h\cdot p^*j_*1+p^*j_*c_1^{\GLt}(\OO_{\PP^2}(1))
	\end{align*}
	where in the second identity we applied the projection formula for Chern classes (proposition \ref{pr:proj formula for chern classes}) to the proper morphism $j'$: with a common abuse of notation, we used the symbol $h$ to denote both the hyperplane section of $\wt{\PP}(V_n)_{[2,1]}$ and its pullback along $j$.
	
	The third identity is a consequence of the compatibility property (proposition \ref{pr:properties chow}.(4)).
	
	The projective bundle formula (proposition \ref{pr:properties chow}.(6)) applied to $\CHgl^2(\wt{\PP}(V_n)_{[2,1]})$ tells us that this group decomposes as a direct sum
	\begin{align*}
		\CHgl^2(\wt{\PP}(V_n)_{[2,1]})&\simeq p^*\CHgl^2(\wt{\bA}(2,2)_{[2,1]})\oplus p^*\CHgl^1(\wt{\bA}(2,2)_{[2,1]})\cdot h \\
		&\oplus p^*\CHgl^0(\wt{\bA}(2,2)_{[2,1]})\cdot h^2
	\end{align*}
	Hence it is enough to check that $[E_1]\neq 0$ in $\CHgl^1(\wh{Q}^{\rm sing}_{[2,1]})$, which follows from lemma \ref{lm:E1 not zero}.
\end{proof}

We now focus on $D'^2_1$ (definition \ref{df:D'1i}). Recall that the points of $D'^2_1$ can be seen as triples $(l^2,[f],p)$ such that the plane curves $l=0$ and $f=0$ does not intersect transversely at $p$. This property is clearly independent of the choice of a representative of $[f]$.

Observe that, just as for $D'^1_1$, this scheme can be seen both as a codimension $1$ subvariety of $\wt{\PP}(V_n)_1$ and as a codimension $2$ subvariety of $\wt{\PP}(V_n)_{[2,1]}$.
\begin{lm}\label{lm:cycle class of Dtilde2}
	We have $[D'^2_1]=0$ in $\CHgl^1(\wt{\PP}(V_n)_1)_{\FF_2}$ and $\CHgl^2(\wt{\PP}(V_n)_{[2,1]})_{\FF_2}$.
\end{lm}
\begin{proof}
	Clearly, the second assertion follows from the first one.
	Applying proposition \ref{prop:cycle classes computations} exactly in the same way as in the proof of lemma \ref{lm:cycle class of Dtilde1}, we see that it is enough to show $[\ol{Z'}^2]=0$ in $\CHgl^1(\ol{E}_1\times\PP(2,n))/(g^*s-c_1)$, where $g$ is the obvious morphism to $\PP(2,2)$ and $\ol{Z'}^2$ is the subvariety whose points are triples $(l^2,f,p)$ such that the plane curves $l=0$ and $f=0$ intersect non transversally in $p$.
	
	As in the proof of lemma \ref{lm:cycle class of Dtilde1}, we have a cartesian diagram 
	\[\xymatrix{
		\varphi^*\ol{E}_1 \ar[r] \ar[d] & \ol{E}_1 \ar[d] \\
		\PP(2,1) \ar[r]^{\varphi} & \PP(2,2)_1}
	\]
	where $\varphi$ sends $[l]$ to $[l^2]$, and the horizontal arrows are equivariant isomorphisms.
	
	Therefore, there is an equivariant isomorphism $\psi:\varphi^*\ol{E}_1\times\PP(2,n)\simeq \ol{E}_1\times\PP(2,n)$. After identifying the equivariant Chow rings of these two schemes, we have that $[\ol{Z'}^2]=[\psi^{-1}\ol{Z'}^2]$.
	
	Recall that in the proof of lemma \ref{lm:cycle class of D2} we introduced the subscheme $Y^2$ of $\PP(2,1)\times\PP(2,n)$ whose points are pairs $(l,f)$ such that the plane curves $l=0$ and $f=0$ do not intersect transversally, and we also proved that $[Y^2]=0$ in $\CHgl^1(\PP(2,1)\times\PP(2,n))_{\FF_2}$.
	
	Observe that the embedding $i:\varphi^*\ol{E}_1\times\PP(2,n)\hookrightarrow \PP(2,1)\times\PP(2,n)\times\PP^2$ is a local complete intersection, because both schemes are regular. We can apply proposition \ref{pr:properties chow}.(9), which tells us that there is a well defined Gysin homomorphism:
	$$ i^{!}:\CHgl(\PP(2,n)\times \PP(2,1)\times\PP^2)\longrightarrow\CHgl(\varphi^*\ol{E}_1\times\PP(2,n)) $$
	In particular, we have $i^{!}[Y^2\times\PP^2]=[\psi^{-1}\ol{Z'}^2]$, thus the latter cycle class is zero in $\CHgl(\varphi^*\ol{E}_1\times \PP(2,n))_{\FF_2}$.
\end{proof}
\section{The key lemma}\label{sec:main lemma}
In this section, we will always be working with Chow groups with coefficients in $\FF_2$.
The goal is to prove the key lemma \ref{lm:key}, which is the only missing ingredient for completing the computation of the cohomological invariants of $\Hcal_g$ (see section \ref{sec:coh inv}). Let us restate here what we want to prove:
\begin{kl-no-num}
	Let $n\geq 1$ be an integer and let $k_0$ be an algebraically closed field of characteristic $\neq 2$. Let $i:\Delta_{1,2n}\hookrightarrow\PP(1,2n)$ be the inclusion of the subscheme of singular forms into the projective space of binary forms of degree $2n$ over $k_0$. Then the pushforward homomorphism:
	\[i_*: \Apgl^0(\Delta_{1,2n},\Het)\longrightarrow\Apgl^1(\PP(1,2n),\Het)\]
	between equivariant Chow groups with coefficients in $\Het:=\oplus_i H^i_{\'{e}t}(-,\mu_2^{\otimes i})$ vanishes.
\end{kl-no-num}
From now on, we assume that the characteristic of the base field $k_0$ is $\neq 2$.
\subsection{Proof of the key lemma}
Corollary \ref{cor:chow diagrams for D} implies that $\Apgl^0(\Delta_{1,2n})\to \Apgl^1(\PP(1,2n))$ is zero if and only if the morphism
\[ i_*:\Agl^0(D_3)\longrightarrow \Agl^1(\PP(V_n)_3) \]
is zero.

if $\wh{Q}_3$ denotes the pullback to $\bA(2,2)_3$ of the universal smooth conic $Q_3\to\PP(2,2)_3$, then $\PP(V_n)_3$ is isomorphic to ${\rm Hilb}^{2n}_{\wh{Q}_3/\bA(2,2)_3}$ (see remark \ref{rmk:hilb}), and $D_3$ is the divisor parametrizing non-\'{e}tale subschemes.

\begin{rmk}
	Before proceeding with a detailed proof of the key lemma, let us briefly sketch our strategy. It is easy to see that $i_*$ is zero in cohomological degree $0$ (see lemma \ref{lm: i_sm is zero in degree 0}). Unfortunately, there are elements of higher cohomological degree contained in $\Agl^1(\PP(V_n)_3)$, namely the ones coming from $\Bcal\PGLt$, so we cannot conclude from here that $i_*$ is zero. The ramification of such elements will be supported on $\PP(V_n)_2=p^{-1}(\bA(2,2)_2)$, i.e. the locus formed by the points parametrizing singular conics.
	
	On the other hand, we expect the ramification of the elements coming from $D_3$ to be related to the stratification of the projective bundle $\PP(V_n)_{[3,2]}$ determined by the type of singularity of the support of the relative section of degree $2n$, no matter what the rank of the conic is. In particular, this would imply that their pushforward to $\PP(V_n)_3$ cannot coincide with the pullbacks of elements from $\Bcal\PGLt$, hence the image of $i_*$ must be zero.
	
	The advantage of using the $\GLt$-quotients rather than the $\PGLt$-quotients is that the former enable us to see the ramification of the invariants coming from $\Bcal\PGLt$.
\end{rmk}

Let us now formalize the ideas contained in the remark above. We know almost nothing about $\Agl^0(D_3)$ but, on the other side, we know a lot about $\Agl^1(\PP(V_n)_3)$. Indeed, from the formula for equivariant Chow rings with coefficients of projective bundles (proposition \ref{pr:properties}.(7)), we have
$$ \Agl^1(\PP(V_n)_3)\simeq\Agl^1(\bA(2,2)_3)\oplus\Agl^0(\bA(2,2)_3)\cdot h$$
where $h$ is equal to $c_1^{\GLt}(\OO(1))$, which is an element of codimension $1$ and degree $0$. Proposition \ref{prop: chow diagrams} applied to the $\PGLt$-scheme $\Spec(k_0)$, whose $\GLt$-counterpart is $\bA(2,2)_3$, tells us that $\Apgl(\Spec(k_0))\simeq \Agl(\bA(2,2)_3)$. Combining this with proposition \ref{pr:Rost of BPGL_2 and P1}, we readily deduce that 
$$ \Agl^1(\PP(V_n)_3)\simeq(\CHgl^1(\PP(V_n)_3)_{\FF_2})\oplus \FF_2\cdot \tau_1 \oplus \FF_2\cdot w_2h $$
where the first addend coincides with the elements of degree $0$, the element $\tau_1$ has codimension and degree both equal to $1$, and finally $w_2$ has codimension $0$ and degree $2$, so that $w_2h$ has codimension $1$ and degree $2$. 

Thanks to the fact that $i_*$ preserves the degree (proposition \ref{pr:properties}.(2)), every element in $\Agl^0(D_3)$ of degree greater than $2$ will be sent by $i_*$ to $0$. We need to find out if there is any element $\alpha$ of degree smaller or equal to $2$ such that $i_*\alpha$ is not zero.
\begin{lm}\label{lm: i_sm is zero in degree 0}
	The morphism $i_*:\CHgl^0(D_3)_{\FF_2}\to\CHgl^1(\PP(V_n)_3)_{\FF_2}$ is zero.
\end{lm}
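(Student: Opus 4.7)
I want to compute the class $[D\sm] \in \CHgl^1(\PP(V_n)\sm)_{\FF_2}$ explicitly and observe that it vanishes mod $2$. Applying the projective bundle formula to the $\GLt$-equivariant projective bundle $\PP(V_n)\sm \to \Qsm$ gives
$$\CHgl^1(\PP(V_n)\sm)_{\FF_2} \cong \CHgl^1(\Qsm)_{\FF_2} \oplus \CHgl^0(\Qsm)_{\FF_2}\cdot h$$
with $h = c_1^{\GLt}(\OO(1))$. By proposition \ref{prop: chow diagrams} applied to the $\PGLt$-scheme $\Spec(k_0)$ — whose $\GLt$-counterpart is $\Qsm$ — we have $\CHgl^*(\Qsm)\simeq \CHpgl$. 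Since $\PGLt$ is simple and so has trivial character group, ${\rm Pic}(\Bcal\PGLt)=\CHpgl^1=0$, so $\CHgl^1(\PP(V_n)\sm)_{\FF_2} = \FF_2\cdot h$.

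Next I would observe that $D\sm$ is irreducible: via the identification $\PP(V_n)\sm \simeq {\rm Hilb}^{2n}_{\Qcal\sm/\Scal}$ from the remark following proposition \ref{prop:P(Vn) counterpart of P^2n}, $D\sm$ is the locus of non-reduced length-$2n$ subschemes, whose generic point represents a divisor on a smooth conic with exactly one double point and $2n-2$ distinct simple points; since this is a Zariski-locally trivial bundle with irreducible fibers over the irreducible base $\Qsm$, it is itself irreducible. Consequently $\CHgl^0(D\sm)_{\FF_2}=\FF_2\cdot[D\sm]$, and the statement reduces to proving $[D\sm]=0$ in $\CHgl^1(\PP(V_n)\sm)_{\FF_2}$.

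Finally, writing $[D\sm]=m\,h$ for some $m\in\ZZ$, I would determine $m$ by restricting to any fiber of $\PP(V_n)\sm\to\Qsm$: each such fiber is isomorphic to $\PP^{2n}$ (the Hilbert scheme of $2n$ points on the corresponding smooth plane conic, equivalently $\PP(H^0(\OO_{\PP^1}(2n)))$), and $D\sm$ restricts to the classical discriminant hypersurface of degree-$2n$ binary forms, whose degree is $2(2n-1)=4n-2$. Hence $m=4n-2\equiv 0\pmod{2}$, so $[D\sm]=0$ as required. The principal obstacle is the vanishing $\CHgl^1(\Qsm)=0$, which depends essentially on the $\GLt$-counterpart machinery of section \ref{sec:GL3 counterpart} combined with the standard triviality of ${\rm Pic}(\Bcal\PGLt)$ for the simple group $\PGLt$.
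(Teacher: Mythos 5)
Your proof is correct, and it takes a genuinely different route from the paper's: the paper settles the lemma in one line by quoting the explicit equivariant class of $D\sm$ from \cite{Dil}*{proposition 4.2} and observing it is an even multiple of $h$, whereas you rederive the needed parity from scratch. You use the projective bundle formula together with $\CHgl(\Qsm)\simeq\CHpgl$ and $\CHpgl^1=0$ to get $\CHgl^1(\PP(V_n)\sm)_{\FF_2}=\FF_2\cdot h$, reduce to the vanishing of $[D\sm]$ via irreducibility of $D\sm$, and then read off the coefficient of $h$ on a fibre, where $D\sm$ restricts to the classical discriminant of binary forms of degree $2n$, of degree $2(2n-1)=4n-2$. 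This buys self-containedness (no appeal to the Chow-ring computation of \cite{Dil}) at the cost of two points that deserve more care. First, irreducibility of $D\sm$ is true, but your justification via a ``Zariski-locally trivial bundle with irreducible fibres'' is not quite right: the universal conic over $\Qsm$ is not Zariski-locally trivial (the generic conic has no rational point), so $D\sm\to\Qsm$ is not obviously locally a product with the discriminant; argue instead, for example, that $D\sm\to[\Delta_{1,2n}/\PGLt]$ is a torsor under the connected group $\GLt$ over an irreducible base, or that $D\sm$ is the image of the irreducible incidence scheme $\Dcal$ restricted over $\Qsm$. Second, to identify the coefficient of $h$ with the fibre degree you should either restrict to the generic fibre, where the scheme-theoretic fibre of the integral scheme $D\sm$ is integral so no intersection multiplicity appears, or simply note that any multiplicity is harmless because the reduced fibre degree $4n-2$ is already even. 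Finally, your value $(4n-2)h$ differs from the constant $4(n-2)h$ quoted in the paper from \cite{Dil}; both are even, which is all the lemma needs, and your fibrewise count matches the classical degree of the binary discriminant.
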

\begin{proof}
	We have to show that the cycle class $[D_3]=0$ in $\CHgl^1(\PP(V_n)_3)_{\FF_2}$. From \cite[proposition 4.2]{Dil} we have that $[D_3]=4(n-2)h$ in $\CHgl(\PP(V_n)_3)$, where $h=c_1^{\GLt}(\OO_{\PP(V_n)_3}(1))$. This implies the lemma.
\end{proof}
We have a closed immersion of $\PP(V_n)_2$ inside $\PP(V_n)_{[3,2]}$, whose open complement is $\PP(V_n)_3$. Moreover, if we pullback $D_{[3,2]}$ along this closed immersion, we obtain $D_2$. By compatibility (proposition \ref{pr:properties}.(5)) we get:
\begin{equation} \label{diag nod}
	\xymatrix{
		\Agl^0(D_{[3,2]}) \ar[r]^{i^{[3,2]}_*} \ar[d]^{j_D^*} & \Agl^1(\PP(V_n)_{[3,2]}) \ar[d]^{j^*} \\
		\Agl^0(D_3) \ar[r]^{i_*} \ar[d]^{\del_D} & \Agl^1(\PP(V_n)_3) \ar[d]^{\del} \\
		\Agl^0(D_2) \ar[r]^{i^{2}_*} \ar[d]^{f_{D*}} & \Agl^1(\PP(V_n)_2) \ar[d]^{f_*} \\
		\Agl^1(D_{[3,2]}) \ar[r]^{i^{[3,2]}_*}  & \Agl^2(\PP(V_n)_{[3,2]}) }
\end{equation}
Observe that the vertical sequences are exact.
\begin{lm}\label{cor: not zero iff not zero for nod}
	The Chow group with coefficients $\Agl^1(\PP(V_n)_{[3,2]})$ is concentrated in degree $0$. In particular, if $\alpha$ is an element of $\Agl^0(D_3)$ of degree greater than $0$, then $i_*\alpha=0$ if and only if $i^{2}_*(\del_D\alpha)=0$.
\end{lm}
\begin{proof}
	Using the projective bundle formula (proposition \ref{pr:properties}.(7)) we have
	$$ \Agl^1(\PP(V_n)_{[3,2]})=\Agl^1(\bA(2,2)_{[3,2]})\oplus\Agl^0(\bA(2,2)_{[3,2]})\cdot h $$
	where $h=c_1^{\GLt}(\OO_{\PP(V_n)_{[3,2]}}(1))$.
	The scheme $\bA(2,2)_{[3,2]}$ is an open subscheme of $\bA(2,2)$ whose complement has codimension $3$. The localization exact sequence (proposition \ref{pr:properties}.(4)) implies that:
	$$ \Agl^i(\bA(2,2)_{[3,2]})=\Agl^i(\bA(2,2)) \text{ for }i=0,1 $$
	The $\GLt$-representation $\bA(2,2)$ can be regarded as a $\GLt$-equivariant vector bundle over $\Spec(k_0)$, hence by homotopy invariance (proposition \ref{pr:properties}.(6)) we obtain:
	$$ \Agl^i(\bA(2,2))=\Agl^i(\Spec(k_0)) $$
	By proposition \ref{pr:Rost of BPGL_2 and P1}, there are no non-zero elements of degree greater than $0$ in this ring, therefore $\Agl^1(\PP(V_n)_{[3,2]})$ is concentrated in degree $0$.
	
	Let $\alpha$ be an element of $\Agl^0(D_3)$ of degree greater than $0$. Then $i_*\alpha=0$ implies that $i^{^2}_*(\del_D)=0$ because of the commutativity of the middle square of diagram \ref{diag nod}. On the other hand, if $i^{^2}_*(\del_D\alpha)=0$ then the exactness of the vertical sequences of diagram \ref{diag nod} implies that $i_*\alpha=j^*\beta$ for some $\beta$ in $\Agl^1(\PP(V_n)_{[3,2]})$. 
	
	By proposition \ref{pr:properties}.(2) and \ref{pr:properties}.(3), we have $\deg(\alpha)=\deg(i_*\alpha)=\deg(\beta)$, thus $\beta=0$ because $\Agl^1(\PP(V_n)_{[3,2]})$ is concentrated in degree $0$.
\end{proof}
\begin{prop}\label{pr:i* is zero for elements of degree one}
	If $\alpha$ in $\Agl^0(D_3)$ has degree $1$, then $i_*\alpha=0$.
\end{prop}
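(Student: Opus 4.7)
The proposition follows almost immediately once one assembles the preceding results, so the plan is mostly bookkeeping rather than new work. The strategy is to chase $\alpha$ along the long exact sequence in the left column of diagram~\eqref{diag nod} and then read off the answer from the cycle class computations of Section~\ref{sec:Chow comp}.

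\textbf{Step 1: reduction to $D\nod$.} Start with $\alpha\in\Agl^0(D\sm)$ of degree $1$ and apply the boundary $\del_D:\Agl^0(D\sm)\to\Agl^0(D\nod)$. Since $D\nod\subset D\red$ is a divisor, $\del_D$ preserves codimension and lowers the degree by one, so $\del_D\alpha$ sits in the degree zero part of $\Agl^0(D\nod)$. By Proposition~\ref{prop:Dnod has two irreducible components} this degree zero part is $\CHgl^0(D\nod)_{\FF_2}\simeq\FF_2\oplus\FF_2$, generated by the components $D\nod^1$ and $D\nod^2$. Write $\del_D\alpha=(n,m)$. Lemma~\ref{lm:del(alpha)=(0,m)} (which already used the cycle class computation of Lemma~\ref{lm:class of D1} together with Lemma~\ref{lm:cycle class of D2}) gives $n=0$, so $\del_D\alpha=(0,m)$.

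\textbf{Step 2: conclude via the corollary.} By Corollary~\ref{cor: not zero iff not zero for nod} (applied to $\alpha$, using Lemma~\ref{lm:del(alpha) not zero} to control the degree $\geq 1$ part of $\Agl^1(\PP(V_n)\sm)$), one has $i_*\alpha=0$ if and only if $i^\nod_*(\del_D\alpha)=0$ in $\Agl^1(\PP(V_n)\nod)$. Since $\del_D\alpha$ lives in degree zero, $i^\nod_*(\del_D\alpha) = m\cdot [D\nod^2]\in\CHgl^1(\PP(V_n)\nod)_{\FF_2}$. By Lemma~\ref{lm:cycle class of D2} this class vanishes, so $i^\nod_*(\del_D\alpha)=0$, and the corollary yields $i_*\alpha=0$.

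\textbf{Where the difficulty lies.} All the nontrivial content has already been absorbed into the preceding lemmas: Lemma~\ref{lm:del(alpha)=(0,m)} is what kills the $D\nod^1$ component of $\del_D\alpha$, and Lemma~\ref{lm:cycle class of D2} is what kills what remains on the $D\nod^2$ side. Both rely on the intersection-theoretic computations of Section~\ref{sec:Chow comp}, which in turn exploit the $\GLt$-counterpart mechanism to replace the $\PGLt$-equivariant Chow groups of $\PP(1,2n)$ by the $\GLt$-equivariant Chow groups of $\PP(V_n)$ and its degenerations. Granting those inputs, the proof of Proposition~\ref{pr:i* is zero for elements of degree one} is just a two-line diagram chase with no further obstacle.
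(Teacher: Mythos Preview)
Your proof is correct and follows essentially the same route as the paper: reduce via Corollary~\ref{cor: not zero iff not zero for nod} to showing $i^{\nod}_*(\del_D\alpha)=0$, invoke Lemma~\ref{lm:del(alpha)=(0,m)} to kill the $D\nod^1$-component, and finish with Lemma~\ref{lm:cycle class of D2}. The only difference is expository: you spell out the identification $\CHgl^0(D\nod)_{\FF_2}\simeq\FF_2\oplus\FF_2$ and the role of each input more explicitly than the paper does.
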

\begin{proof}
	By hypothesis, $\del_D\alpha$ is a degree zero element of $\Agl^0(D_2)$: the degree zero part of this group can be identified with $\CHgl^0(D_2)_{\FF_2}$ (proposition \ref{pr:properties}.(1)). From proposition \ref{prop:Dnod has two irreducible components} we deduce that
	$$\CHgl^0(D_2)_{\FF_2}\simeq \CHgl^0(D_2^1)_{\FF_2}\oplus \CHgl^0(D_2^2)_{\FF_2}\simeq \FF_2\oplus\FF_2$$
	where $D^1_2$ and $D^2_2$ are the two irreducible components of $D_2$ (see definition \ref{df:D1} and \ref{df:D2}).
	
	Write $\del_D\alpha=(n,m)$. From lemma \ref{cor: not zero iff not zero for nod} we have that $i_*\alpha=0$ if and only if
	$$ 0=i^{2}_*(n,m) = n[D_2^1]+m[D_2^2] \in \CHgl^1(\PP(V_n)_2)_{\FF_2} $$
	Because of the exactness of the left vertical sequence of the diagram \ref{diag nod}, we have $f_{D*}(\del_D\alpha)=0$. This implies that 
	$$0=i^{[3,2]}_*f_{D*}(\del_D\alpha)=n[D_2^1]+m[D_2^2] $$
	in $\CHgl^2(\PP(V_n)_{[3,2]})$. By lemma \ref{lm:class of D1} we know that $[D^1_1]=c_1h \neq 0$ and lemma \ref{lm:cycle class of D2} tells us that $[D_2^2]=0$: we deduce $0=nc_1h$, thus $n=0$ and $\del_D\alpha=(0,m)$
	
	In particular $i_*^{2}\del_D\alpha=m[D^2_2]$ in $\CHgl^1(\PP(V_n)_2)$, and by lemma \ref{lm:cycle class of D2} we know that this last term is zero: this concludes the proof.
\end{proof}

In the next lines, we will prove a result similar to the one of proposition \ref{pr:i* is zero for elements of degree one} but relative to the closed embedding $D_2\subset \wt{\PP}(V_n)_2$ insted of $D_3\subset \PP(V_n)_3$ (see proposition \ref{pr:i_* is zero in degree two})

Let $\wh{Q}\sng_{[2,1]}\to \bA(2,2)_{[2,1]}$ be the singular locus of the morphism $\wh{Q}_{[2,1]}\to\bA(2,2)_{[2,1]}$, where $\wh{Q}_{[2,1]}$ is the pullback to $\bA(2,2)_{[2,1]}$ of the universal conic $Q_{[2,1]}\to\PP(2,2)_{[2,1]}$ restricted over the closed subscheme of conics of rank $< 3$ (see definition \ref{df:Qsng}). Denote $E_1$ the restriction of $\wh{Q}\sng_{[2,1]}\to \bA(2,2)_{[2,1]}$ over $\bA(2,2)_1$. Recall that $\wh{Q}\sng_2\to\bA(2,2)_2$ is an isomorphism, and that $E_1\to \bA(2,2)_1$ is a projective bundle (see proposition \ref{prop:Qsng bir}).

Let $\wt{\PP}(V_n)_{[2,1]}$ be the pullback of $\PP(V_n)_{[2,1]} \to \bA(2,2)_{[2,1]}$ along the morphism $\wh{Q}\sng_{[2,1]}\to \bA(2,2)_{[2,1]}$, and define $\wt{\PP}(V_n)_1$ as the restriction of $\wt{\PP}(V_n)_{[2,1]}$ over $E_1$ (see definition \ref{df:Ptilde}). The restriction $\wt{\PP}(V_n)_2$ of $\wt{\PP}(V_n)_{[2,1]}$ over $\wh{Q}\sng_2$ is isomorphic to $\wt{\PP}(V_n)_2$ (see lemma \ref{lm:Ptilde2=P2}).

Finally, let $D'_{[2,1]}$ be as in definition \ref{df:D'1} and let $D'_1$ be the restriction of $D'_{[2,1]}$ over $E_1$. Then by compatibility (proposition \ref{pr:properties}.(5)), we get the following commutative diagram, whose vertical sequences are exact:
\begin{equation}\label{diag tilde}
	\xymatrix{
		\Agl^0(D'_{[2,1]}) \ar[r]^{{i'}_*} \ar[d]^{j_D^*} & \Agl^1(\wt{\PP}(V_n)_{[2,1]}) \ar[d]^{j^*} \\
		\Agl^0(D_2) \ar[r]^{i^{2}_*} \ar[d]^{\del_D} & \Agl^1(\PP(V_n)_2) \ar[d]^{\del} \\
		\Agl^0(D'_1) \ar[r]^{i'^1_*} \ar[d]^{f_{D*}} & \Agl^1(\wt{\PP}(V_n)_1) \ar[d]^{f_*} \\
		\Agl^1(D'_{[2,1]}) \ar[r]^{i'_*}  & \Agl^2(\wt{\PP}(V_n)_{[2,1]}) }
\end{equation}
Observe that lemma \ref{lm:Ptilde2=P2} allowed us to identify in the diagram above $\Agl^0(D'_2)\simeq \Agl^0(D_2)$ and $\Agl^0(\wt{\PP}(V_n)_2)\simeq \Agl^0(\PP(V_n)_2)$.

\begin{lm}\label{cor: Qtilde is a projective bundle}
	We have $\Agl^0(\wh{Q}\sng_{[2,1]})\simeq \Agl^0(\PP^2)$ and $\Agl^1(\wh{Q}\sng_{[2,1]})\simeq \Agl^1(\PP^2)$.
\end{lm}
\begin{proof}
	Let $\wh{Q}\sng$ be the closed subscheme of $\bA(2,2)\times\PP^2$ defined as follows:
	\[ \wh{Q}\sng:=\left\{ (q,p) \text{ such that }q_x(p)=q_y(p)=q_z(p)=0 \right\}\]
	Observe that $\wh{Q}\sng\to\PP^2$ is a $\GLt$-equivariant vector subbundle of the (trivial) vector bundle $\bA(2,2)\times\PP^2\to\PP^2$. Therefore, for the homotopy invariance of the Chow groups with coefficients (see proposition \ref{pr:properties}.(6)) we get $\Agl^1(\wh{Q}\sng)\simeq\Agl^1(\PP^2)$.
	
	The scheme $\wh{Q}\sng_{[2,1]}$ is the complement in $\wh{Q}\sng$ of the zero section, which has codimension $3$. The localization exact sequence (see proposition \ref{pr:properties}.(4)) implies that, if $Z$ is a closed subscheme of codimension $>i$ of a scheme $X$, then the Chow group with coefficients of codimension $i$ of $X$ and $X\setminus Z$ coincide.
	
	Applying this to our case, we deduce that $\Agl^i(\wh{Q}\sng)\simeq\Agl^i(\wh{Q}\sng_{[2,1]})$ for $i=0,1$.
\end{proof}
\begin{lm}\label{cor: not zero iff not zero for E}
	The Chow group with coefficients $\Agl^1(\wt{\PP}(V_n)_{[2,1]})$ is concentrated in degree $0$. In particular, given an element $\beta$ of $\Agl^0(D_2)$ whose degree is greater than $0$, then $i^2_*\beta=0$ if and only if $i'^1_*(\del_D\beta)=0$.
\end{lm}
\begin{proof}
	Applying the projective bundle formula (proposition \ref{pr:properties}.(7)) we get:
	\[\Agl^1(\wt{\PP}(V_n)_{[2,1]})\simeq \Agl^1(\wh{Q}\sng_{[2,1]})\oplus \Agl^0(\wh{Q}\sng_{[2,1]})\cdot h\]
	We know from lemma \ref{cor: Qtilde is a projective bundle} that $\Agl^i(\wh{Q}\sng_{[2,1]})$ is concentrated in degree $0$ for $i=0,1$: this proves the first part of the lemma.
	
	Given an element $\beta$ in $\Agl^0(D_2)$ of degree greater than $0$, suppose that $i'^1_*(\del_D\beta)=0$. The commutativity of diagram \ref{diag tilde} implies that $\del(i^2_*\beta)=0$, and the exactness of the right vertical sequence tells us that $i^2_*\beta=j^*\gamma$. Observe that 
	\[0<\deg(\beta)=\deg(i^2_*\beta)=\deg(j^*\gamma)=\deg(\gamma) \]
	because pushforwards and pullback preserve the degree (see proposition \ref{pr:properties}.(2) and \ref{pr:properties}.(3)). The group $\Agl^1(\wt{\PP}(V_n)_{[2,1]})$ is concentrated in degree $0$, hence $\gamma=0$ and this concludes the proof.
\end{proof}

\begin{prop}\label{pr:i_* is zero in degree two}
	If $\beta$ is an element of degree $1$ in $\Agl^0(D_2)$, then $i^{2}_*\beta=0$.
\end{prop}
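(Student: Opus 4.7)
The proof should be a direct mimicry of the argument for Proposition \ref{pr:i* is zero for elements of degree one}, with the roles of the pair $(\PP(V_n)\red, \PP(V_n)\nod)$ now replaced by the pair $(\PP(V_n)\bl, \PP(V_n)_E)$, and the roles of $D\red, D\nod$ replaced by $\Dtilde, \Dtilde_E$. The whole machinery of diagram \ref{diag tilde} has been set up precisely so that this parallel works.

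First I would apply Corollary \ref{cor: not zero iff not zero for E}: since $\beta$ has degree $1>0$, proving $i^{\nod}_*\beta=0$ is equivalent to proving $i^E_*(\del\beta)=0$ in $\Agl^1(\PP(V_n)_E)$. Next I would invoke Lemma \ref{lm:del(beta)=(0,m)}, which tells us that under the identification
\[
\Agl^0(\Dtilde_E)_{\text{deg }0} \simeq \CHgl^0(\Dtilde_E)_{\FF_2} \simeq \CHgl^0(\Dtilde_E^1)_{\FF_2}\oplus\CHgl^0(\Dtilde_E^2)_{\FF_2},
\]
the class $\del\beta$ has the form $(0,m)$; in particular it is supported on the component $\Dtilde_E^2$.

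The final step is to compute $i^E_*(\del\beta) = m\cdot[\Dtilde_E^2]$ in $\CHgl^1(\PP(V_n)_E)_{\FF_2}$, and this vanishes immediately by Lemma \ref{lm:cycle class of Dtilde2}, which establishes precisely that $[\Dtilde_E^2]=0$ in this group. Combining these three ingredients gives $i^{\nod}_*\beta=0$.

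There is no significant obstacle in this last step: all the hard work has already been done. The two substantive inputs, namely the vanishing of $[\Dtilde_E^2]$ in $\CHgl^1(\PP(V_n)_E)_{\FF_2}$ (Lemma \ref{lm:cycle class of Dtilde2}) and the non-vanishing of $[\Dtilde_E^1]$ in $\CHgl^2(\PP(V_n)\bl)_{\FF_2}$ (Corollary \ref{cor:cycle class Dtilde1 non zero})—used inside Lemma \ref{lm:del(beta)=(0,m)} to force $n=0$—are the genuine content, and they were handled in the previous section. What is worth emphasising in the write-up is merely that this proposition, together with Proposition \ref{pr:i* is zero for elements of degree one} and the observation that $\Agl^1(\PP(V_n)\sm)$ has no elements of degree $>2$ (as noted right after the proof of Lemma \ref{lm: i_sm is zero in degree 0}), completes the proof of Lemma \ref{lm:main lemma}, since every possible degree of $\alpha\in\Agl^0(D\sm)$ has now been shown to be killed by $i_*$.
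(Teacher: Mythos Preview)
Your proposal is correct and follows essentially the same approach as the paper's own proof: reduce via Corollary \ref{cor: not zero iff not zero for E} to showing $i^E_*(\del\beta)=0$, use Lemma \ref{lm:del(beta)=(0,m)} to write $\del\beta=(0,m)$, and conclude by Lemma \ref{lm:cycle class of Dtilde2} that $i^E_*(\del\beta)=m[\Dtilde_E^2]=0$. Your additional commentary on how this fits into the proof of Lemma \ref{lm:main lemma} is accurate and matches the paper's logic.
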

\begin{proof}
	By lemma \ref{cor: not zero iff not zero for E}, we can equivalently show that $i'^1_*(\del_D\beta)=0$. 
	Observe that $\del_D(\beta)$ has degree $0$. The degree $0$ part of $\Agl^0(D'_1)$ is isomorphic to $\CHgl^0(D'_1)_{\FF_2}$ by proposition \ref{pr:properties}.(1).
	Recall that $D'_1$ is the union of two irreducible components (see proposition \ref{pr:DtildeE has two irr comp}), hence we have:
	\[\CHgl^0(D'_1)_{\FF_2}=\CHgl^0(D'^1_1)_{\FF_2}\oplus \CHgl^0(D'^2_1)_{\FF_2}\]
	By exactness of the left vertical sequence of diagram \ref{diag tilde}, we have that $f_{D*}(\del_D\beta)=0$, thus $i'_*f_{D*}(\del_D\beta)=0$.
	If we write $\del_D\beta$ as $(n,m)$, then we are saying that $n[D'^1_1]+m[D'^2_1]=0$ in $\CHgl^1(\wt{\PP}(V_n)_{[2,1]})_{\FF_2}$. By corollary \ref{cor:cycle class Dtilde1 non zero} and lemma \ref{lm:cycle class of Dtilde2} we know that $[D'^1_1]\neq 0$ and $[D'^2_1]=0$, thus $n=0$.

	We have proved that $\del_D\beta=(0,m)$. This implies that $i'^1_*(\del_D\beta)=m[D'^2_1]$, which is equal to zero by lemma \ref{lm:cycle class of Dtilde2}.
\end{proof}
We are ready to prove the key lemma \ref{lm:key}.
\begin{proof}[Proof of key lemma \ref{lm:key}]
	We want to prove that:
	\[ i_*:\Agl^0(D_3)\longrightarrow \Agl^1(\PP(V_n)_3)\]
	is zero. As already observed, $\Agl^1(\PP(V_n)_3)$ is concentrated in degree $0$, $1$ and $2$, hence every element $\alpha$ in $\Agl^0(D_3)$ of degree $>2$ will be sent to $0$, because $i_*$ preserves the degree (see proposition \ref{pr:properties}.(1)).
	
	Suppose $\deg(\alpha)=0$. The morphism $i_*$ restricted to the degree $0$ part is equal to
	\[i_*:\CHgl^0(D_3)_{\FF_2}\to\CHgl^1(\PP(V_n)_3)_{\FF_2}\]
	by proposition \ref{pr:properties}.(1), and we know that this morphism is zero by lemma \ref{lm: i_sm is zero in degree 0}.
	
	Suppose $\deg(\alpha)=1$. Then proposition \ref{pr:i* is zero for elements of degree one} tells us that $i_*\alpha=0$. 
	
	The only case left is when $\deg(\alpha)=2$. By lemma \ref{cor: not zero iff not zero for nod} we have that $i_*\alpha=0$ if and only if $i^{2}_*(\del_D\alpha)=0$. 
	Observe that $\deg(\del_D\alpha)=1$, hence we can apply proposition \ref{pr:i_* is zero in degree two} to deduce that $i^{2}_*(\del_D\alpha)=0$. This finishes the proof.
\end{proof}

\begin{bibdiv}
	\begin{biblist}
		\bib{ArsVis}{article}{			
			author={Arsie, A.},
			author={Vistoli, A.},			
			title={Stacks of cyclic covers of projective spaces},			
			journal={Compos. Math.},
			volume={140},			
			date={2004},			
			number={3}	}
		\bib{Dil}{article}{			
			author={Di Lorenzo, A.},			
			title={The Chow ring of the stack of hyperelliptic curves of odd genus},			
			journal={Int. Math. Res. Not.},
			date={2019},
			doi={10.1093/imrn/rnz101}}
					
		\bib{EF}{article}{
			author={Edidin, D.},
			author={Fulghesu, D.},
			title={The integral Chow ring of the stack of hyperelliptic curves of
				even genus},
			journal={Math. Res. Lett.},
			volume={16},
			date={2009},
			number={1},
		}
		\bib{EG}{article}{
			author={Edidin, D.},
			author={Graham, W.},
			title={Equivariant intersection theory},
			journal={Invent. Math.},
			volume={131},
			date={1998},
			number={3},
		}
		\bib{FV}{article}{
			author={Fulghesu, D.},
			author={Vistoli, A.},
			title={The Chow Ring of the Stack of Smooth Plane Cubics},
			journal={Michigan Math. J.},
			volume={67},
			date={2018},
			number={1},
		}
		\bib{Ful}{book}{
			author={Fulton, W.},
			title={Intersection theory},
			series={Ergebnisse der Mathematik und ihrer Grenzgebiete. 3. Folge. A
				Series of Modern Surveys in Mathematics [Results in Mathematics and
				Related Areas. 3rd Series. A Series of Modern Surveys in Mathematics]},
			volume={2},
			edition={2},
			publisher={Springer-Verlag, Berlin},
			date={1998},
			
		}
		
		\bib{GMS}{collection}{
			author={Garibaldi, S.},
			author={Merkurjev, A.},
			author={Serre, J.-P.},
			title={Cohomological invariants in Galois cohomology},
			series={University Lecture Series},
			volume={28},
			publisher={American Mathematical Society, Providence, RI},
			date={2003},
		}
		\bib{Guil}{article}{			
			author={Guillot, P.},
			title={Geometric methods for cohomological invariants},			
			journal={Doc. Math.},
			volume={12},			
			date={2007},}
		\bib{Har}{book}{
			author={Hartshorne, Robin},
			title={Algebraic geometry},
			note={Graduate Texts in Mathematics, No. 52},
			publisher={Springer-Verlag, New York-Heidelberg},
			date={1977},
			pages={xvi+496},
			isbn={0-387-90244-9},
			review={\MR{0463157}},
		}
		
		\bib{Tot}{article}{
			author={Totaro, Burt},
			title={The Chow ring of a classifying space},
			conference={
				title={Algebraic $K$-theory},
				address={Seattle, WA},
				date={1997},
			},
			book={
				series={Proc. Sympos. Pure Math.},
				volume={67},
				publisher={Amer. Math. Soc., Providence, RI},
			},
			date={1999},
			pages={249--281},
			review={\MR{1743244}},
			doi={10.1090/pspum/067/1743244},
		}
		\bib{PirCohHypEven}{article}{
			author={Pirisi, R.},
			title={Cohomological invariants of hyperelliptic curves of even genus},
			journal={Algebr. Geom.},
			volume={4},
			date={2017},
			number={4},}
		\bib{PirAlgStack}{article}{			
			author={Pirisi, R.},
			title={Cohomological invariants of algebraic stacks},			
			journal={Trans. Amer. Math. Soc.},
			volume={370},			
			date={2018},			
			number={3}	}
		\bib{PirCohHypThree}{article}{
			author={Pirisi, R.},
			title={Cohomological invariants of genus three hyperelliptic curves},			
			journal={Doc. Math.},
			volume={23}
			date={2018}}
		\bib{Rost}{article}{			
			author={Rost, M.},
			title={Chow groups with coefficients},			
			journal={Doc. Math.},
			volume={1},			
			date={1996},			
			number={16}	}
		\bib{stacks-project}{article}{
			author={{Stacks project authors}},
			title={The Stacks project},
			journal={https://stacks.math.columbia.edu}
			year={2019}
		}
		\bib{Vis}{article}{			
			author={Vistoli, A.},
			title={The Chow ring of $\cl{M}_2$. Appendix to "Equivariant intersection theory"},			
			journal={Invent. Math.},
			volume={131},			
			date={1998}
			number={3}	}
		\bib{Wit}{article}{
			author={Witt, E.},
			title={Theorie der quadratischen Formen in beliebigen K\"orpern},
			language={German},
			journal={J. Reine Angew. Math.},
			volume={176},
			date={1937},
		
		}

	\end{biblist}
\end{bibdiv}
\end{document}